\newlist{steps}{enumerate}{1}
\setlist[steps, 1]{label = {\it \underline{\smash{Step~\arabic*:}}}, ref={\it Step~\arabic*}, wide, itemindent = 3pt}
\theoremstyle{plain}
\newtheorem{thm}{Theorem}[section]
\newtheorem{lem}[thm]{Lemma}
\newtheorem{prop}[thm]{Proposition}
\theoremstyle{definition}
\newtheorem{defi}[thm]{Definition}
\theoremstyle{remark}
\newtheorem{rem}[thm]{Remark}
\numberwithin{equation}{section}
\newcommand{\X}{\mathfrak{X}}
\newcommand{\R}{\mathbb{R}}
\newcommand{\Z}{\mathbb{Z}}
\newcommand{\N}{\mathbb{N}}
\newcommand{\F}{\mathcal{F}}
\newcommand{\eps}{\varepsilon}
\def\MR{\mathcal{R}}
\def\R{\mathbb{R}}
\def\Z{\mathbb{Z}}
\def\N{\mathbb{N}}
\def\L{\mathcal{L}}
\def\A{\mathcal{A}}
\newcommand{\G}{{\mathfrak{G}}}
\newcommand{\GL}{{\mathfrak{G}_s(\lambda, \Lambda)}}
\DeclareRobustCommand{\S}{\ifmmode\mathsection\else\textsection\fi}
\renewcommand\mathsection{\operatorname{\mathbb{S}}}
\DeclareMathOperator{\dist}{dist}
\newcommand{\average}{{\mathchoice {\kern1ex\vcenter{\hrule height.4pt
width 6pt depth0pt} \kern-9.7pt} {\kern1ex\vcenter{\hrule
height.4pt width 4.3pt depth0pt} \kern-7pt} {} {} }}
\def\R{\mathbb{R}}
\title[Estimates for nonlocal equations with singular kernels]{Schauder and Cordes-Nirenberg estimates \\ for nonlocal elliptic equations \\ with singular kernels}
\author{Xavier Fern\'andez-Real}
\address{EPFL SB, Station 8, CH-1015 Lausanne, Switzerland}
\email{xavier.fernandez-real@epfl.ch}
\author{Xavier Ros-Oton}
\address{ICREA, Pg. Llu\'is Companys 23, 08010 Barcelona, Spain \& Universitat de Barcelona, Departament de Matem\`atiques i Inform\`atica, Gran Via de les Corts Catalanes 585, 08007 Barcelona, Spain \& Centre de Recerca Matem\`atica, Barcelona, Spain}
\email{xros@icrea.cat}
\keywords{Integro-differential equations, Schauder estimates, Cordes-Nirenberg estimates.}
\subjclass[2010]{35B65, 35R11, 47G20}
\thanks{X. F. was supported by the Swiss National Science Foundation (SNF grants 200021\_182565 and PZ00P2\_208930),   by the Swiss State Secretariat for Education, Research and Innovation (SERI) under contract number MB22.00034, and by the AEI project PID2021-125021NA-I00 (Spain). \\X. R. was supported by the European Research Council (ERC) under the Grant Agreement No 801867, by the AEI project PID2021-125021NA-I00 (Spain), by the AGAUR project 2021 SGR 00087 (Catalunya), the AEI grant RED2022-134784-T funded by MCIN/AEI/10.13039/501100011033 (Spain), and the AEI Mar\'ia de Maeztu Program for Centers and Units of Excellence in R\&D (CEX2020-001084-M)}
\begin{document}

\begin{abstract}
We study integro-differential elliptic equations (of order $2s$) with variable coefficients, and prove the natural and most general Schauder-type estimates that can hold in this setting, both in divergence and non-divergence form.
Furthermore, we also establish H\"older estimates for general elliptic equations with no regularity assumption on $x$, including for the first time operators like $\sum_{i=1}^n(-\partial^2_{\textbf{v}_i(x)})^s$, provided that the coefficients have ``small oscillation''.
\end{abstract}

\maketitle

\section{Introduction}

The study of integro-differential elliptic equations has been an important area of research in the 21st century, especially since the celebrated works of Caffarelli and Silvestre \cite{CS09,CS11b,CS11}.
There are several important motivations to study these equations, from quite diverse areas including Probability (L\'evy processes) \cite{Sat99}, Mathematical Physics (the Boltzmann equation) \cite{IS,IS3,SiICM}, Geometry (nonlocal minimal surfaces) \cite{CFS,CabreCozzi}, or other models in applied sciences \cite{DGLZ,Eri02,GO08,MK}.
We refer to the books \cite{PK,FR23} for an overview of the state of the art in this area of research.

One of the most fundamental questions in this context is to understand the regularity of solutions to \emph{linear} equations (of order $2s$) of the form
\begin{equation}\label{linear}
\L(u,x)=f\quad \textrm{in}\quad B_1\subset \R^n.
\end{equation}
When the operator $\L$ is translation invariant, the regularity theory for \eqref{linear} is well understood \cite{FR23}.
However, there are several open problems in case of equations with {variable coefficients}, of which we have two types\footnote{The kernel $K$ may be a nonnegative measure.
Even though we use the notation $K(x,y)dy$ as if it was absolutely continuous, it does not need to be.
We will prove all our results for general measures $K$, but in the introduction we abuse notation for simplicity.}:

\vspace{2mm}

\begin{itemize}[leftmargin=*]
\item \emph{Non-divergence-form} operators are those of the form 
\begin{equation}
\label{eq:wedenotedA}
\L(u,x)  = {\rm P.V.} \int_{\R^n}\big(u(x)-u(x+y)\big)K(x,y)\, dy 
\end{equation}
with 
\begin{equation}
\label{eq:wedenotedA2}
K\geq0\qquad \textrm{and}\qquad K(x,y)=K(x,-y)\quad \textrm{for all}\quad x,y\in \R^n.
\end{equation}
They correspond (in the limiting case $s=1$) to operators of the type ${\rm tr}(A(x)D^2 u)= \sum_{i,j=1}^n a_{ij}(x)\partial_{ij}u$.

\vspace{2mm}

\item \emph{Divergence-form} operators, instead, are those of the form
\begin{equation}
\label{eq:wedenotedB}
\L (u,x)  = {\rm P.V.} \int_{\R^n}\big(u(x)-u(z)\big)K(x,z) \, dz
\end{equation}
with 
\begin{equation}
\label{eq:wedenotedB2}
K\geq0\qquad \textrm{and}\qquad K(x,z)=K(z,x)\quad \textrm{for all}\quad x,z\in \R^n.
\end{equation}
They correspond (in the limiting case $s=1$) to operators of the type ${\rm div}(A(x)\nabla u)= \sum_{i,j=1}^n \partial_i\big(a_{ij}(x)\partial_{j}u\big)$.

\vspace{2mm}

\end{itemize}

Of course, when the kernels do not depend on $x$ (translation invariant operators), then these two classes of operators coincide.

When the kernels $K$ in \eqref{eq:wedenotedA} or \eqref{eq:wedenotedB} satisfy a strong form of uniform ellipticity,
\begin{equation}\label{strong-ellipticity}
K(x,y)\asymp \frac{1}{|y|^{n+2s}} \qquad \textrm{or} \qquad K(x,z)\asymp \frac{1}{|x-z|^{n+2s}},
\end{equation}
then the regularity theory for \eqref{linear} is well understood; see \cite{BL02,Bas09,BFV,DJZ,Fal,JX,Kas09,Kri13,KMS, Ser2}.
However, things become much more delicate when one tries to understand general uniformly elliptic operators of order $2s$ (for which the kernels $K$ may be a purely singular measure). 

The ellipticity conditions in this setting are
\begin{equation}
\label{eq:Kellipt_gen_A}
r^{2s} \int_{B_{2r}\setminus B_r} K(x,y)\, dy \le \Lambda\qquad\text{for all}\quad r>0
\end{equation}
and
\begin{equation}
\label{eq:Kellipt_gen_A2}
r^{2s-2}\inf_{e\in \S^{n-1}}\int_{B_{r}}|e\cdot y|^{2} K(x,y)\, dy \geq \lambda>0 \qquad\text{for all}\quad r>0
\end{equation}
for \eqref{eq:wedenotedA}, or the analogous conditions  \eqref{divergence-ellipticity0B}-\eqref{divergence-ellipticity1B} in case of operators \eqref{eq:wedenotedB}.

For operators with no $x$-dependence, these two conditions are equivalent to $\A(\xi) \asymp |\xi|^{2s}$, where $\A$ is the Fourier symbol of $\L$; see \cite{FR23}.

An outstanding open question in this context is the following:

\vspace{2mm}

\noindent {\bf Open question:} {\it Establish H\"older estimates for solutions to $\L(u,x)=0$ for general operators $\L$ of the form  \eqref{eq:wedenotedA}-\eqref{eq:wedenotedA2} satisfying \eqref{eq:Kellipt_gen_A}-\eqref{eq:Kellipt_gen_A2}.}

\vspace{2mm}

Moreover, the analogous question is also open in case of divergence-form operators \eqref{eq:wedenotedB}-\eqref{eq:wedenotedB2}; see \cite{DK20,CS20}.

These are often called equations with \emph{bounded measurable coefficients}.
Notice that, as said before, the general ellipticity conditions \eqref{eq:Kellipt_gen_A}-\eqref{eq:Kellipt_gen_A2} allow for kernels $K$ that may be purely singular measures.
A typical example is 
\begin{equation}\label{typical}
\L(u,x):=\sum_{i=1}^n\big(-\partial^2_{\textbf{v}_i(x)}\big)^s,
\end{equation}
where $\textbf{v}_i:\R^n\to\S^{n-1}$ are such that $\det (\textbf{v}_i)\geq c>0$.

Despite many important developments in the last years \cite{SS,IS2,Loh23,DK20,IS,CS20,Now0,Now}, no regularity result is known for operators of the type \eqref{typical} ---except for the simplest case of translation invariant ones \cite{RS-stable,DRSV22,FR23}.

The goal of this paper is to prove new regularity estimates for general elliptic operators $\L$ with $x$-dependence, both in divergence and non-divergence form, under the natural ellipticity conditions \eqref{eq:Kellipt_gen_A}-\eqref{eq:Kellipt_gen_A2} for integro-differential operators (in particular, much more general than \eqref{strong-ellipticity}). 
The estimates we prove are either Cordes-Nirenberg or Schauder for equations \eqref{linear}.

\subsection{Main results for non-divergence form equations}

Our first result is a Cordes-Nirenberg-type estimate for general nonlocal equations in non-divergence form, and reads as follows.  

\begin{thm}\label{thmA}
Let $s\in (0,1)$, $\beta\in(0,2s)$, and let $\L$ be an operator of the form \eqref{eq:wedenotedA}-\eqref{eq:wedenotedA2} satisfying \eqref{eq:Kellipt_gen_A}-\eqref{eq:Kellipt_gen_A2} uniformly in $x\in \R^n$, and
\begin{equation}\label{close-kernels}
\sup_{\begin{subarray}{c} [\phi]_{C^\beta(\R^n)}\leq 1 \\ \phi(0)=0 \end{subarray}} 
\left|\int_{B_{2\rho}\setminus B_\rho} \phi(y)\big(K(x,y)-K(x',y)\big)dy\right| \leq \eta \rho^{\beta-2s},
\end{equation}
for all $x, x'\in B_1$ and all $\rho > 0$, for some $\eta > 0$.
Then, there exists $\eta_0>0$ depending only on $n$, $s$, $\beta$, $\lambda$, and $\Lambda$, such that if $\eta \le \eta_0$, then the following holds.

Let $f\in L^\infty(B_1)$, and let $u\in C_{\rm loc}^{2s+}(B_1) \cap L^\infty(\R^n)$ be any solution of \eqref{linear}.
Then,
\[
\|u\|_{C^{\beta}(B_{1/2})} \le C\left(\|u\|_{L^\infty(\R^n)} + \|f\|_{L^\infty(B_1)}\right)
\]
for some $C$ depending only on $n$, $s$, $\beta$, $\lambda$, and $\Lambda$.
\end{thm}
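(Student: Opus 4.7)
The plan is to run a standard Cordes--Nirenberg perturbation scheme, in which the desired $C^\beta$ regularity is obtained by iteratively improving the oscillation of $u$ at geometric scales $\rho_0^k$. After the usual normalization $\|u\|_{L^\infty(\R^n)} + \|f\|_{L^\infty(B_1)}\le 1$, the theorem reduces to producing $\rho_0\in(0,1/2)$ and a sequence $\{c_k\}\subset\R$ with $|c_k-c_{k-1}|\le C\rho_0^{(k-1)\beta}$ such that
\[
\sup_{B_{\rho_0^k}}|u-c_k|\le \rho_0^{k\beta},
\]
from which the $C^\beta$ estimate at the origin follows by summation, and the interior estimate in $B_{1/2}$ follows by translating the origin and re-centering the closeness condition \eqref{close-kernels} at each point.

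\textbf{Key oscillation lemma by compactness.} The crucial step is the base case $k=1$: there exist $\rho_0\in(0,1/2)$ and $\eta_0>0$ such that if $\|u\|_{L^\infty(\R^n)}\le 1$, $\|f\|_{L^\infty(B_1)}\le\eta_0$, and $\eta\le\eta_0$, then some $c_1\in[-C,C]$ satisfies $\sup_{B_{\rho_0}}|u-c_1|\le \rho_0^\beta$. I would prove this by contradiction, taking sequences of operators $\L_k$, solutions $u_k$, and right-hand sides $f_k$ with $\eta_k,\|f_k\|_{L^\infty(B_1)}\to 0$ for which the conclusion fails. A uniform local H\"older estimate (with some small exponent $\alpha_0>0$ depending only on the structural constants) yields precompactness of $\{u_k\}$ in $L^\infty_{\mathrm{loc}}$; weak-$\ast$ convergence of the kernel measures, combined with the vanishing of their $x$-oscillation, should then identify the limit $u_\infty$ as a solution of a translation-invariant equation $\L_\infty u_\infty=0$, with $\L_\infty$ still satisfying \eqref{eq:Kellipt_gen_A}--\eqref{eq:Kellipt_gen_A2}. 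For such translation-invariant operators $C^{\beta'}$ estimates with $\beta'\in(0,2s)$ are known (see \cite{RS-stable,FR23}), so $u_\infty$ is H\"older with some exponent strictly above $\beta$, and the choice $c_1:=u_\infty(0)$ yields the desired decay, contradicting the failure of the conclusion.

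\textbf{Iteration via rescaling.} With the key lemma in hand, iteration proceeds by setting $\tilde u(x):=\rho_0^{-\beta}\bigl(u(\rho_0 x)-c_1\bigr)$ and applying the lemma anew to $\tilde u$. A direct change of variables shows that the rescaled operator retains the ellipticity \eqref{eq:Kellipt_gen_A}--\eqref{eq:Kellipt_gen_A2} and the same closeness parameter $\eta$, the latter precisely because \eqref{close-kernels} is homogeneous of order $\beta-2s$ in $\rho$; meanwhile the rescaled right-hand side has norm bounded by $\rho_0^{2s-\beta}\|f\|_{L^\infty}\le\|f\|_{L^\infty}$ because $\beta<2s$. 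The subtlety is that $\tilde u$ is no longer globally bounded by $1$: after $k$ steps its $L^\infty$-norm on $\R^n$ can grow like $C(1+|x|)^\beta$, so the iteration must be carried out on the weighted class $\{v:\,|v(x)|\le C(1+|x|)^\beta\}$, with the key lemma proved to be stable under such tail growth (the tails enter only through integrals $\int|v(y)|\,K(x,y)\,dy$, and $\beta<2s$ keeps them summable and small on $B_{\rho_0}$).

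\textbf{Main obstacle.} The hardest point is the compactness input in the contradiction argument: since H\"older regularity for ``bounded measurable coefficients'' in this generality is itself the open question recorded just before Theorem~\ref{thmA}, one cannot invoke such an estimate off the shelf. In this perturbative regime, however, one should be able to establish an independent quantitative $C^{\alpha_0}$ bound (with $\alpha_0$ small, depending only on $n,s,\lambda,\Lambda$) for solutions in the small-oscillation class, via a Krylov--Safonov-type argument adapted from \cite{SS,IS2,DK20}, which provides the equi-continuity needed to run the contradiction. Comparatively routine are the preservation of the hypotheses under rescaling and the stability of the equation under weak-$\ast$ limits of kernels, the latter handled along the lines of the standard stability theory for nonlocal viscosity solutions.
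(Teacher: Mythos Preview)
Your overall compactness-and-contradiction philosophy matches the paper's, but the paper organises the argument so as to bypass precisely the step you flag as the ``main obstacle'', and that step is a genuine gap in your sketch.

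In your key oscillation lemma you assume only $\|u_k\|_{L^\infty(\R^n)}\le 1$ and then need a uniform $C^{\alpha_0}$ bound on the $u_k$ in order to extract a locally uniformly convergent subsequence. You propose to obtain this via a Krylov--Safonov argument adapted from \cite{SS,IS2,DK20}, but those results do not cover the class of kernels at hand (purely singular measures such as \eqref{typical}); obtaining such an estimate for bounded measurable coefficients in this generality is exactly the open question recorded just before Theorem~\ref{thmA}, and the small-oscillation hypothesis \eqref{close-kernels} does not by itself unlock it without circularity. So as written, the compactness step is not justified.

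The paper sidesteps this entirely by following Simon's scheme: instead of an oscillation-decay iteration, it proves the a~priori inequality
\[
[u]_{C^\beta(B_{1/2})}\le\delta\,[u]_{C^\beta(\R^n)}+C_\delta\bigl(\|u\|_{L^\infty(B_1)}+\|f\|_{L^\infty(B_1)}\bigr)
\]
for $u\in C^{2s+}_c(\R^n)$ (Proposition~\ref{prop:interior-linear-x-nondiv}), via the abstract blow-up Lemma~\ref{lem-interior-blowup}. In the contradiction alternative, the rescaled sequence $v_k$ satisfies $[v_k]_{C^\beta(\R^n)}=1$ \emph{by construction}, so Arzel\`a--Ascoli applies immediately and no independent H\"older estimate is needed. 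One then shows that the limit $w$ solves a translation-invariant equation in the viscosity sense (Proposition~\ref{eq:Liouv_nondiv_infty}); this is where \eqref{close-kernels} actually enters, through Lemma~\ref{lem-sdhhh}, to control $(\L_0^{(k)}-\L_{\bar x_k}^{(k)})\phi_k$ on test functions. Liouville (Theorem~\ref{thm:Liouville}) then forces $w\equiv 0$, and the $\delta[u]_{C^\beta(\R^n)}$ term is absorbed by the standard cutoff-and-covering argument.

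A secondary issue: your iteration subtracts only constants $c_k$, which at best yields $C^\beta$ with $\beta<1$. For $\beta\in(1,2s)$ one must subtract affine functions at each step; the paper's quantitative Liouville statement (Proposition~\ref{eq:Liouv_nondiv_infty}) is accordingly formulated with the $\lfloor\beta\rfloor$-th order Taylor polynomial.
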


To the best of our knowledge, this is the first H\"older estimate for equations in non-divergence form with no regularity assumption on $x$ which allows general kernels  \eqref{eq:Kellipt_gen_A}-\eqref{eq:Kellipt_gen_A2}.
Of course, this comes at the cost of assuming that the ``coefficients'' $K(x,\cdot)$ have ``small oscillation''.

Notice that, in \eqref{close-kernels}, the ``distance'' between the kernels $K(x,\cdot)$ and $K(x',\cdot)$ is measured in a (weighted) Kantorovich-type norm.
Notice also that this assumption allows for operators with purely singular kernels, such as \eqref{typical} provided that 
\[\big|\textbf{v}_i(x)-\textbf{v}_i(x')\big|^\beta\leq c\eta\quad\textrm{in}\quad B_1.\]

\begin{rem}
Condition \eqref{close-kernels} is scale invariant, and therefore when the kernels $K$ are homogeneous in the $y$-variable, it is equivalent to
\[
\sup_{\|\phi\|_{C^\beta(\S^{n-1})}\leq 1} 
\left|\int_{\S^{n-1}} \phi(\theta)\big(K(x,\theta)-K(x',\theta)\big)d\theta\right| \leq \eta .
\]
Moreover, something similar happens for the condition \eqref{Calpha-kernels} below.
\end{rem}

Notice also that the assumption \eqref{close-kernels} is significantly weaker than
\[\qquad \qquad \int_{B_{2\rho}\setminus B_\rho} \big|K(x,y)-K(x',y)\big|dy \leq \eta \rho^{-2s}\quad \textrm{for all}\quad x, x'\in B_1,\ \rho > 0,\]
which allows some operators with singular kernels, but not those like \eqref{typical}.

We also prove a Schauder-type estimate for this type of equations, in which $C^\alpha$ regularity of the coefficients yield $C^{2s+\alpha}$ regularity of the solution.

\begin{thm}\label{thmB}
Let $s\in (0,1)$, $\gamma\in[0,1)$, and let $\L$ be an operator of the form \eqref{eq:wedenotedA}-\eqref{eq:wedenotedA2} satisfying \eqref{eq:Kellipt_gen_A}-\eqref{eq:Kellipt_gen_A2} uniformly in $x\in \R^n$, and \footnote{When $\gamma=0$, we denote $[\phi]_{C^0(\Omega)}:={\rm osc}_\Omega\,\phi$.}
\begin{equation}\label{Calpha-kernels}
\sup_{\begin{subarray}{c} [\phi]_{C^\gamma(\R^n)}\leq 1 \\ \phi(0)=0 \end{subarray}} 
\left|\int_{B_{2\rho}\setminus B_\rho} \phi(y)\big(K(x,y) - K(x',y)\big)dy\right| \leq M|x-x'|^\alpha\rho^{\gamma-2s}
\end{equation}
for all $x,x'\in \R^n$ and all $\rho>0$, for some $M > 0$. 
 In case $\alpha>\gamma$, assume in addition that $\sup_{x\in B_1} [K_x]_{\alpha-\gamma} \leq M$; see \eqref{Calpha-assumption}.

Let $f\in C^\alpha(B_1)$, and let $u\in C^{2s+\alpha}(B_1)\cap C^{\gamma}(\R^n)$ be any solution of \eqref{linear}.
Then,
\[
\|u\|_{C^{2s+\alpha}(B_{1/2})} \le C\left(\|f\|_{C^\alpha(B_1)} + \|u\|_{C^\gamma(\R^n)}\right)
\]
for some $C$ depending only on $n$, $s$, $\alpha$, $\gamma$, $\lambda$, $\Lambda$, and $M$.
\end{thm}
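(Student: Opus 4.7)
The plan is to prove Theorem \ref{thmB} via a Campanato-type compactness/blow-up argument, reducing the Schauder estimate at a point to a Liouville-type theorem for the frozen, translation-invariant operator
$$
\L_{x_0}(u,x) := \mathrm{P.V.}\int_{\R^n}\bigl(u(x) - u(x+y)\bigr) K(x_0, y)\, dy,
$$
whose Liouville and interior regularity theory for kernels satisfying \eqref{eq:Kellipt_gen_A}--\eqref{eq:Kellipt_gen_A2} is already available in the translation-invariant setting (see \cite{FR23,RS-stable,DRSV22}).

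By a standard Campanato-type characterization of $C^{2s+\alpha}$, it suffices to show that for every $x_0\in B_{1/2}$ there is a polynomial $P_{x_0}$ of degree $d:=\lfloor 2s+\alpha\rfloor$ with
$$
\|u - P_{x_0}\|_{L^\infty(B_r(x_0))} \leq C\, r^{2s+\alpha}\bigl(\|f\|_{C^\alpha(B_1)} + \|u\|_{C^\gamma(\R^n)}\bigr) \quad \text{for all } r\leq r_0.
$$
I argue by contradiction. If this pointwise estimate fails, one produces sequences $u_k$, $f_k$ and operators $\L_k$ satisfying the hypotheses uniformly, normalized so that $\|f_k\|_{C^\alpha(B_1)}+\|u_k\|_{C^\gamma(\R^n)}\leq 1$, but with
$\Theta_k(r) := r^{-(2s+\alpha)}\inf_{\deg P\leq d}\|u_k - P\|_{L^\infty(B_r)} \to \infty$
at some scales $r_k\to 0$. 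A ``largest-bad-scale'' selection produces $r_k$ at which $\Theta_k(r_k)$ dominates $\Theta_k(r)$ for $r\geq r_k$ up to a fixed factor. Letting $P_k$ be the $L^2(B_{r_k})$-best polynomial approximation of $u_k$ of degree $\leq d$, define
$$
v_k(x) := \frac{(u_k - P_k)(r_k x)}{\Theta_k(r_k)\, r_k^{2s+\alpha}},
$$
so that $v_k$ is $L^2(B_1)$-orthogonal to polynomials of degree $\leq d$, $\|v_k\|_{L^\infty(B_1)}\sim 1$, and $\|v_k\|_{L^\infty(B_R)}\leq C R^{2s+\alpha}$ for all $R\geq 1$.

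The $v_k$ solve $\tilde\L_k v_k = \tilde f_k$ in $B_{r_0/r_k}$, where $\tilde\L_k$ has kernel $\tilde K_k(x,y) = r_k^{n+2s}K_k(r_k x, r_k y)$ (still obeying \eqref{eq:Kellipt_gen_A}--\eqref{eq:Kellipt_gen_A2}), and $\tilde f_k\to 0$ uniformly. The crucial point is that the hypothesis \eqref{Calpha-kernels} rescales as $M r_k^\alpha\to 0$, so the $x$-dependence of $\tilde K_k$ becomes negligible in the limit. Applying Theorem \ref{thmA} (whose smallness assumption \eqref{close-kernels} is satisfied by $\tilde K_k$ with $\eta = M r_k^\alpha$ eventually below $\eta_0$) at scale $\sim 1$ furnishes a uniform Hölder bound for $v_k$; combined with the polynomial growth, one extracts a subsequence with $v_k\to v_\infty$ locally uniformly, while $\tilde K_k(x,\cdot)$ converges weakly-$*$ (uniformly in $x$) to a single translation-invariant kernel $K_\infty\in\GL$. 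Passing to the limit in the equation yields $\L_\infty v_\infty = 0$ in $\R^n$ with $|v_\infty(x)|\leq C(1+|x|^{2s+\alpha})$.

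The Liouville theorem for translation-invariant operators in $\GL$ (see \cite{FR23,RS-stable}) then forces $v_\infty$ to be a polynomial of degree $\leq d$, and the $L^2(B_1)$-orthogonality built into $v_k$ persists in the limit, forcing $v_\infty\equiv 0$, in contradiction with $\|v_\infty\|_{L^\infty(B_1)}\sim 1$. The hardest step will be the compactness/stability used in the blow-up: since the kernels are merely nonnegative measures satisfying \eqref{eq:Kellipt_gen_A}--\eqref{eq:Kellipt_gen_A2}, one must show that $\tilde K_k(x,\cdot)$ converges weakly-$*$ to $K_\infty$ uniformly in $x\in B_R$, and then justify the pointwise convergence $\tilde\L_k v_k(x)\to \L_\infty v_\infty(x)$ by splitting the defining integral into a compactly supported part (handled by the weak-$*$ convergence and the uniform Hölder continuity of $v_k$) and a far-field part (controlled via \eqref{eq:Kellipt_gen_A} together with the $(2s+\alpha)$-polynomial growth of $v_k$, after symmetrizing in $y$ to exploit the evenness of $K$).
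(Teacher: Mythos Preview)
Your blow-up/Campanato strategy is a genuinely different route from the paper's. The paper proceeds by a \emph{direct perturbation argument}: first rescale so that the $x$-variation constant $M$ in \eqref{Calpha-kernels} becomes an arbitrarily small $\delta$; then freeze coefficients at $x=0$ and invoke the translation-invariant Schauder estimate (Theorem~\ref{thm-interior-linear-2}) for $\L_0$; finally show
\[
\|\L(u,\cdot)-\L_0 u\|_{C^\alpha(B_{1/2})}\le C\bigl(\delta\|u\|_{C^{2s+\alpha}(B_1)}+\|u\|_{C^\gamma(\R^n)}\bigr)
\]
via a dedicated lemma (Lemma~\ref{lem-sdhhh}) that uses \eqref{Calpha-kernels} to control $(\L_x-\L_{x'})w$ in $L^\infty$ and $C^\alpha$, and absorb the $\delta$-term by the standard covering iteration. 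This avoids compactness entirely, does not use Theorem~\ref{thmA}, and requires no Liouville theorem beyond growth $<2s$.

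Your approach is in principle viable, but as written it has two gaps you should be aware of. First, when $2s+\alpha>2$ you subtract a quadratic $P_k$, and for kernels merely in $\GL$ the quantity $\L(P_k,x)$ diverges (since $\int_{|y|\ge 1}|y|^2K(x,dy)=\infty$); hence the rescaled equation $\tilde\L_k v_k=\tilde f_k$ is not well-defined as stated. One has to work with second-order increments of $u_k$ rather than with $u_k-P_k$, which changes the structure of the argument. Second, the Liouville step you invoke requires a classification of entire solutions with growth $|x|^{2s+\alpha}$, but the Liouville theorem available for general $\GL$ (Theorem~\ref{thm:Liouville}) only covers growth $<2s$. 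Upgrading it to growth $2s+\alpha$ is possible by iterating increments and feeding in the H\"older bound from Theorem~\ref{thmA}, but this is a genuine additional step (and in the case $\alpha>\gamma$ it is precisely where the extra hypothesis $[K_x]_{\alpha-\gamma}\le M$ enters). The paper's perturbative route sidesteps both issues and is considerably shorter.
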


As explained later on, we expect \eqref{Calpha-kernels} to be the minimal assumption under which these Schauder-type estimates hold; see \eqref{nyi} and Lemma \ref{lem-sdhhh}.

Notice that, when $\gamma\geq \alpha$, our result allows for operators with purely singular kernels, like~\eqref{typical}.
This is the first Schauder-type estimate for general non-translation invariant operators, like \eqref{typical}.
On the other hand, under condition \eqref{Calpha-kernels} with $\gamma=0$, a similar result was established in~\cite{IS2} in the context of kinetic equations; see also~\cite{Loh23}.

\subsection{Main results for divergence form equations}

We also establish new results for divergence-form equations.
In this context, the best known Schauder estimates are for operators with kernels satisfying \eqref{strong-ellipticity}; see Fall \cite{Fal}.

We improve substantially these estimates by allowing general elliptic operators \eqref{eq:wedenotedB} with kernels satisfying 
\begin{equation}\label{divergence-ellipticity0B}
r^{2s} \int_{B_{2r}(x)\setminus B_r(x)} K(x,z)\, dz\le \Lambda\qquad\text{for all}\quad x\in \R^n
\end{equation}
and 
\begin{equation}\label{divergence-ellipticity1B}
r^{2s-2}\inf_{e\in \S^{n-1}}\int_{B_{r}(x)}|e\cdot (x-z)|^{2} K(x,z)\, dz\ge \lambda>0\qquad\text{for all}\quad x\in \R^n.
\end{equation}

In order to obtain Schauder-type estimates, in addition to the uniform ellipticity assumptions \eqref{divergence-ellipticity0B}-\eqref{divergence-ellipticity1B} we need to assume some $C^\alpha$ regularity of the kernels in the $x$-variable.
More precisely, we assume 
\begin{equation}\label{reg-x-div-S}
\int_{B_{2\rho}(x)\setminus B_\rho(x)} \big|K(x+h,h+z)-K(x,z)\big|\, dz\leq M|h|^\alpha \rho^{-2s}
\end{equation}
for all $x, h\in \R^n$,   and $\rho > 0$. 
Besides the previous regularity in $x$ we also need to assume that the kernel, at small scales, is (quantitatively) \emph{almost even}:  
\begin{equation}\label{reg-x-div-even-S}
\int_{B_{2\rho}\setminus B_\rho} \big|K(x,x+y)-K(x,x-y)\big|\, dy  \leq M\rho^{\alpha-2s}
\end{equation}
for all $x\in \R^n$, and $\rho > 0$.

Finally, in some cases we will need to assume regularity in the $z$-variable as well, given by 
\begin{equation}\label{reg-x-div-y-S}
\int_{B_{2\rho}(x) \setminus B_\rho(x) } \big|K(x, h+z) - K(x, z)\big|\, dz  \le M |h|^\theta\rho^{-2s-\theta},
\end{equation}
(for some $\theta\in (0, 1]$) for all $h\in B_{\rho/2}$ and for all $x\in \R^n$ and $\rho > 0$.

The nonlocal Schauder estimates for general equations in divergence form read as follows:

\begin{thm}\label{thmC}
Let $s\in (0,1)$, $\alpha\in (0, 1]$, $\eps \in (0, \alpha)$, and let $\L$ be any operator of the form \eqref{eq:wedenotedB}-\eqref{eq:wedenotedB2} satisfying \eqref{divergence-ellipticity0B}-\eqref{divergence-ellipticity1B} uniformly in $x\in \R^n$, as well as
  \eqref{reg-x-div-S}-\eqref{reg-x-div-even-S} for some $M > 0$. 
 
Let $u\in C^{\beta}_{\rm loc}(B_1)\cap L^\infty(\R^n)$ be any weak solution of \eqref{linear}
with $f\in X$, and  
\begin{equation}\label{sjdgneorij2-S}
\beta:=\left\{ \begin{array}{rl}
1+\alpha & \textrm{if}\quad s>{\textstyle \frac12}, \vspace{1mm}\\
1+\alpha-\eps &  \textrm{if}\quad s={\textstyle \frac12}, \vspace{1mm} \\
2s+\alpha &  \textrm{if}\quad s<{\textstyle \frac12},
\end{array}\right.\qquad X := \left\{
\begin{array}{ll}
C^{\beta-2s}(B_1)& \quad\text{if}\quad \beta > 2s,\\
L^{\frac{n}{2s-\beta}}(B_1)& \quad\text{if}\quad \beta < 2s.
\end{array}
\right.
\end{equation}
  Assume in addition that $\beta\neq1$, $\beta\neq 2s$, and that \eqref{reg-x-div-y-S} holds if $\beta > 2s$, with $\theta = \beta-2s$. 
Then,
\[
\|u\|_{C^{\beta}(B_{1/2})} \le C\left(\|u\|_{L^\infty(\R^n)} + \|f\|_{X}\right)
\]
The constant $C$ depends only on $n$, $s$, $\alpha$, $\eps$, $\lambda$, $\Lambda$, and $M$. 
\end{thm}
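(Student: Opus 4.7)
The plan is to establish Theorem~\ref{thmC} by a freeze-the-coefficients compactness argument combined with a Liouville-type classification, adapted from the standard Campanato--Caffarelli approach to the nonlocal divergence-form setting. The key external ingredients are: the Schauder/regularity theory and the Liouville theorem for \emph{translation-invariant, even} divergence-form operators in the class $\GL$, stating that entire weak solutions of $\L_0 v=0$ with polynomial growth of order $<\beta$ must be polynomials of degree $<\beta$ (available from \cite{FR23,RS-stable,DRSV22}); together with an energy/Caccioppoli-type inequality that upgrades weak $H^s$-compactness into locally uniform $C^{\beta-\delta}$-compactness for frozen weak solutions.

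After the usual scaling normalisation reducing to $\|u\|_{L^\infty(\R^n)}+\|f\|_X\le 1$, I would argue by contradiction. Suppose the claim fails along sequences $u_k$, $f_k$, and operators $\L_k$ whose kernels satisfy \eqref{divergence-ellipticity0B}--\eqref{reg-x-div-y-S} uniformly, with $[u_k]_{C^\beta(B_{1/2})}\to\infty$. Perform a Campanato-type selection: pick $x_k\in B_{1/2}$ and $r_k\to 0$ at which the $C^\beta$ excess is essentially saturated, let $p_k$ be the best $L^2$ polynomial approximation of degree $\lfloor\beta\rfloor$ to $u_k$ in $B_{r_k}(x_k)$, and set
\[
v_k(y)=\frac{u_k(x_k+r_k y)-p_k(x_k+r_k y)}{\theta_k\,r_k^{\beta}},
\]
where $\theta_k\to\infty$ is chosen so that $\|v_k\|_{L^2(B_1)}=1$. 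Maximality of the selection forces $v_k$ to grow at most polynomially of order $\beta$ at infinity with uniform constants, while the rescaled equation $\L_k^{r_k}v_k=\tilde f_k$ satisfies $\|\tilde f_k\|_X\to 0$---the sharp choice $X=L^{n/(2s-\beta)}$ in the subcritical regime $\beta<2s$ is dictated precisely by this scaling invariance. Using compactness of the rescaled kernels in a weak Kantorovich-type topology together with Arzelà--Ascoli, extract a limit $v_\infty\in C^\beta_{\rm loc}(\R^n)$ solving $\L_\infty v_\infty=0$ in $\R^n$ for some translation-invariant and even $\L_\infty\in\GL$, and invoke the Liouville theorem to reach a contradiction with the normalisation.

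The main technical obstacle is the stability/limit-passing step, and three issues are crucial. First, the evenness of $\L_\infty$ is not automatic from ellipticity alone: it is precisely the quantitative almost-evenness \eqref{reg-x-div-even-S} that ensures the antisymmetric part of the rescaled kernel vanishes in the limit, placing $\L_\infty$ in the class where the Liouville classification holds. Second, when $\beta>2s$ one must differentiate the equation by applying incremental quotients $\Delta_h$: this generates a commutator $[\Delta_h,\L_k]u_k$ whose control is exactly the content of the $z$-regularity hypothesis \eqref{reg-x-div-y-S} with $\theta=\beta-2s$, and without it one cannot bootstrap from $C^{2s}$ regularity to $C^{2s+\alpha}$ (respectively $C^{1+\alpha}$). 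Third, the exclusion of the thresholds $\beta=1$ and $\beta=2s$ reflects two genuine transitions---between polynomial and subpolynomial growth regimes in the Liouville step, and between Hölder and Morrey hypotheses on $f$---where $BMO$- or logarithmic-type estimates would be required and a fundamentally different argument is needed.
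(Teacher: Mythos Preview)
Your blow-up/Liouville outline for the subcritical case $\beta<2s$ is structurally the same as the paper's: they too argue by contradiction, rescale around points of large excess (via the abstract Lemma~\ref{lem-interior-blowup} rather than an $L^2$ Campanato selection), and pass to a translation-invariant even limit $\bar\L_0\in\GL$ on which the Liouville theorem applies. For $\beta>2s$, however, the paper takes a quite different, non-compactness route: since $\L$ can then be evaluated pointwise on $C^\beta$ functions, they treat the equation as non-divergence, split $\L=\L^e+\L^o$ into even and odd parts, freeze $\L^e$ at the origin, apply the translation-invariant Schauder estimate (Theorem~\ref{thm-interior-linear-2}) to $\L^e_0$, and bound the remainders $\|\L^e(u,\cdot)-\L^e_0 u\|_{C^{\beta-2s}}$ and $\|\L^o(u,\cdot)\|_{C^{\beta-2s}}$ by explicit dyadic computations using \eqref{reg-x-div-S}, \eqref{reg-x-div-even-S}, \eqref{reg-x-div-y-S}. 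Your incremental-quotient approach is not wrong in principle, but the commutator analysis inside the weak formulation would end up reproducing essentially these same dyadic sums with the extra overhead of the compactness machinery.

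There is also a real gap in your stability step for $\beta<2s$. You assert that \eqref{reg-x-div-even-S} makes the limit kernel even, but the actual quantity to drive to zero along the blow-up is
\[
E_0=\iint \big(v_k(x)-v_k(z)\big)\big(\eta(x)-\eta(z)\big)\big(\tilde K_k(x,z)-\tilde K^{(k)}_0(z-x)\big)\,dz\,dx,
\]
and here the scaling is delicate: the kernel-regularity constants in \eqref{reg-x-div-S}--\eqref{reg-x-div-even-S} improve by a factor $r_k^\alpha$ under rescaling, while $\|\nabla v_k\|_{L^\infty}$ deteriorates by $r_k^{-\alpha}$, so the two cancel exactly and nothing vanishes unless one also has $\|\nabla u_k\|_{L^\infty}/[u_k]_{C^{1+\alpha}}\to 0$. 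The paper engineers this by inserting $\|\nabla u\|_{L^\infty}$ into the functional $\mathcal S(u)$ that feeds the blow-up lemma (see Proposition~\ref{prop:interior-linear-x-div}), and records the resulting joint smallness hypothesis explicitly as \eqref{x-dependence-div} in Proposition~\ref{prop:compactness-x-div}. Your sketch has no analogue of this ingredient; without it the Lipschitz factor $|v_k(x)-v_k(z)|\le \|\nabla v_k\|_{L^\infty}|x-z|$---which is what makes the dyadic sum over large annuli converge (one needs exactly $1+\alpha<2s$ there)---is unbounded along the sequence, and $E_0$ cannot be shown to vanish.
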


Notice that conditions \eqref{reg-x-div-S}-\eqref{reg-x-div-even-S} are like a $C^\alpha$ regularity of the coefficients. 
Indeed, since $K(x, z) = K(z, x)$, we have that \eqref{reg-x-div-S}-\eqref{reg-x-div-even-S} hold automatically if one assumes the (stronger) pointwise condition
\[
\big|K(x+h, z+h) -K(x, z)\big| \le \frac{M|h|^\alpha}{|z-x|^{n+2s}},\qquad\text{for all}\quad x, z, h\in \R^n. 
\]
In general, though, our assumptions \eqref{reg-x-div-S}-\eqref{reg-x-div-even-S} allow for much more singular kernels. 
Furthermore, even for  operators  comparable to the fractional Laplacian, \eqref{strong-ellipticity}, the new assumptions \eqref{reg-x-div-S}-\eqref{reg-x-div-even-S} are the most general and comprehensible interpretation of $C^\alpha$ coefficients in the context of integro-differential operators.

We also establish a Cordes-Nirenberg type result in this context:

\begin{thm}\label{thmD}
Let $s\in (0,1)$, let $\beta\in (s, \min\{1,2s\})$, and let $\L$ be any operator of the form \eqref{eq:wedenotedB}-\eqref{eq:wedenotedB2} satisfying \eqref{divergence-ellipticity0B}-\eqref{divergence-ellipticity1B} uniformly in $x\in \R^n$, as well as 
\begin{equation}\label{reg-x-div-CN}
\int_{B_{2\rho}(x)\setminus B_\rho(x)} \big|K(x+h,h+z)-K(x,z)\big|dz\leq \eta \rho^{-2s}
\end{equation}
and
\begin{equation}\label{reg-x-div-even-CN}
\int_{B_{2\rho}\setminus B_\rho} \big|K(x,x+y)-K(x,x-y)\big|dy  \leq \eta\rho^{-2s}
\end{equation}
for all $x,h\in \R^n$, and $\rho > 0$.
Then, there exists $\eta>0$ small, depending only on $n$, $s$, $\beta$, $\lambda$, $\Lambda$, for which the following holds.
 
Let $u\in C^{\beta}_{\rm loc}(B_1)\cap L^\infty(\R^n)$ be any weak solution of \eqref{linear}
with $f\in L^{\frac{n}{2s-\beta}}(B_1)$.
Then,
\[
\|u\|_{C^{\beta}(B_{1/2})} \le C\left(\|u\|_{L^\infty(\R^n)} + \|f\|_{L^{\frac{n}{2s-\beta}}(B_1)}\right)
\]
The constant $C$ depends only on $n$, $s$, $\beta$, $\lambda$, $\Lambda$. 
\end{thm}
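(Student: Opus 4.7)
The plan is a Cordes--Nirenberg iteration at every point of $B_{1/2}$, built on a single improvement-of-oscillation lemma at one point. The hypotheses \eqref{divergence-ellipticity0B}-\eqref{divergence-ellipticity1B}, \eqref{reg-x-div-CN}-\eqref{reg-x-div-even-CN}, $\|u\|_{L^\infty(\R^n)}\le 1$, and $\|f\|_{L^{n/(2s-\beta)}(B_1)}\le 1$ are all preserved under the rescaling $u_r(x)=r^{-\beta}(u(rx)-c)$, $f_r(x)=r^{2s-\beta}f(rx)$ (the Lebesgue exponent $n/(2s-\beta)$ being exactly the scale-critical one here). So the theorem reduces to proving that there exist $\rho\in(0,1/2)$ and $\eta_0>0$, depending only on $n,s,\beta,\lambda,\Lambda$, such that whenever $\L$ satisfies the hypotheses with $\eta\le\eta_0$, $\|u\|_{L^\infty(\R^n)}\le 1$, and $\|f\|_{L^{n/(2s-\beta)}(B_1)}\le\eta_0$, there exists $c\in[-1,1]$ with $\sup_{B_\rho}|u-c|\le\rho^\beta$. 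Iterating this lemma at dyadic scales then yields the $C^\beta$ estimate.

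I would prove the lemma by compactness and contradiction: if it failed, there would be sequences $\L_k$, $u_k$, $f_k$ with small-oscillation parameter $\eta_k\to 0$, $\|u_k\|_{L^\infty(\R^n)}\le 1$, $\|f_k\|_{L^{n/(2s-\beta)}(B_1)}\to 0$, and $\L_k u_k=f_k$ weakly in $B_1$, but for which no constant approximation as above is possible. A Caccioppoli-type energy inequality gives a uniform bound
\[
\int_{B_{3/4}}\!\!\int_{B_{3/4}}\bigl(u_k(x)-u_k(z)\bigr)^2 K_k(x,z)\,dz\,dx\le C,
\]
and the second-moment bound \eqref{divergence-ellipticity1B} turns this into a uniform $H^s$-bound on $u_k$ in $B_{3/4}$. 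Fractional Sobolev embedding then yields a subsequence with $u_k\to u_\infty$ in $L^2_{\rm loc}(B_1)$ and weakly-$*$ in $L^\infty(\R^n)$, with $\|u_\infty\|_{L^\infty(\R^n)}\le 1$.

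To identify the limit equation, set $\tilde K_k(x,w):=K_k(x,x+w)$. The small-oscillation hypotheses \eqref{reg-x-div-CN}-\eqref{reg-x-div-even-CN} assert that the $x$-oscillation and the odd-in-$w$ part of $\tilde K_k$ have weighted $L^1$-norm on dyadic annuli of order $O(\eta_k)$, while \eqref{divergence-ellipticity0B}-\eqref{divergence-ellipticity1B} provide uniform $L^1$-upper and second-moment lower bounds on every annulus. A diagonal extraction then produces a translation-invariant, even kernel $\tilde K_\infty(w)$ satisfying the same ellipticity, with $\tilde K_k\rightharpoonup\tilde K_\infty$ in the appropriate dual topology. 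For a smooth test function $\phi$ compactly supported in $B_1$, the bilinear form splits into a near-diagonal part (controlled by the $L^2_{\rm loc}$ convergence of $u_k$ and the Lipschitz bound of $\phi$) and a tail part (controlled by the uniform $L^1$-bound on the kernels on far annuli together with $L^\infty$-weak-$*$ convergence), so passing to the limit yields $\L_\infty u_\infty=0$ weakly in $B_1$, where $\L_\infty$ is the translation-invariant operator with kernel $\tilde K_\infty$.

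Since $\L_\infty$ is translation invariant and uniformly elliptic in the sense of \eqref{divergence-ellipticity0B}-\eqref{divergence-ellipticity1B}, one can apply the interior Hölder estimate for translation-invariant nonlocal divergence-form operators (see \cite{FR23}) to difference quotients of $u_\infty$ and bootstrap to conclude $u_\infty\in C^\gamma_{\rm loc}(B_1)$ for every $\gamma<\min\{1,2s\}$; in particular for some $\gamma>\beta$. Choosing $\rho$ small so that $[u_\infty]_{C^\gamma(B_{1/2})}\rho^{\gamma-\beta}\le\tfrac12$, we get $|u_\infty(x)-u_\infty(0)|\le\tfrac12\rho^\beta$ in $B_\rho$, and combined with the $L^2_{\rm loc}$ (upgraded to local uniform) convergence $u_k\to u_\infty$ this contradicts the failure of the lemma for $k$ large. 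The main technical obstacle is precisely the compactness-and-convergence step in the third paragraph: isolating the correct dual topology in which $\tilde K_k\rightharpoonup\tilde K_\infty$ so that the ellipticity passes to the limit and the weak formulation converges in both the near-diagonal and the tail regimes, and upgrading the energy-based convergence of $u_k$ to a sense strong enough to beat the positive lower bound $\rho^\beta$ appearing in the failed conclusion. The rest---the Caccioppoli estimate, the interior regularity of the translation-invariant limit, and the dyadic iteration---is by now standard.
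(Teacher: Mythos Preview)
Your strategy is in the right spirit—reduce to a translation-invariant limit by compactness—but it differs from the paper's route and, as you yourself flag, has a genuine gap at the ``upgrade'' step. This is not a technicality: in this setting there is \emph{no} De Giorgi--Nash--Moser $C^\alpha$ estimate available for the operators $\L_k$ (this is exactly the open problem discussed in the introduction), so there is no general mechanism to turn your energy-based $L^2_{\rm loc}$ compactness into the $L^\infty_{\rm loc}$ convergence you need to contradict $\sup_{B_\rho}|u_k-c|>\rho^\beta$. A related issue is the claim that the Caccioppoli energy plus the second-moment condition \eqref{divergence-ellipticity1B} yields a uniform $H^s$ bound: this is a nontrivial coercivity statement for kernels that may be supported on lower-dimensional sets (cf.\ \cite{CS20}, \cite{DK20}), and even granting it, you still only get $L^2$ compactness.

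The paper sidesteps both difficulties by working entirely within the a priori $C^\beta$ framework. Instead of an improvement-of-oscillation lemma with $\|u\|_{L^\infty}\le 1$, it uses a blow-up lemma (Lemma~\ref{lem-interior-blowup}) that, in the contradiction scenario, produces rescalings $v_k$ with $[v_k]_{C^\beta(\R^n)}=1$ by construction. Arzel\`a--Ascoli then gives locally uniform convergence for free—no energy estimate is needed. Moreover, because the blow-up sends the domain to all of $\R^n$, the limit $V=\lim(v_k(\cdot+h)-v_k)$ solves the translation-invariant equation $\bar\L_0 V=0$ in $\R^n$ (not just in $B_1$), so one can invoke the Liouville theorem (Theorem~\ref{thm:Liouville}) directly rather than interior regularity plus a choice of scale $\rho$. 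The price is that the resulting estimate carries a term $\delta[u]_{C^\beta(\R^n)}$ on the right (Proposition~\ref{prop:interior-linear-x-div-cn}), which is then absorbed by a cut-off and covering argument; this is precisely why the theorem is stated as an a priori estimate with $u\in C^\beta_{\rm loc}(B_1)$ assumed. If you want to salvage your approach, the cleanest fix is to measure oscillation in $L^2$ and iterate in the Campanato sense, but you then have to be careful that the nonlocal tail (which demands an $L^\infty(\R^n)$-type control) is preserved through the rescalings—this is where the argument becomes delicate, and it is simpler to follow the paper's normalization.
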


As said before, to our knowledge, these are the first Schauder and Cordes-Nirenberg type results for nonlocal elliptic operators in divergence form with kernels not satisfying \eqref{strong-ellipticity}.
Moreover, we expect our assumptions \eqref{reg-x-div-S}-\eqref{reg-x-div-even-S} to be essentially optimal for the Schauder estimate to hold.

\subsection{Organization of the paper}

In Section \ref{sec:prel_steps} we give some preliminary definitions and known results.
In Section \ref{sec3} we prove Theorem \ref{thmB}.
Then, in Section \ref{sec4} we prove Theorem \ref{thmC}.
In Section \ref{sec5} we prove Theorem \ref{thmD}, and finally in Section \ref{sec6} we give the proof of Theorem \ref{thmA}.
In addition, in Appendix~\ref{appA} we collect some useful results about H\"older spaces that we use throughout the paper.

\section{Preliminaries}\label{sec:prel_steps}

In this section we introduce the notation used throughout the work, as well as some preliminary results that will be useful in the following proofs. 

\subsection{Notation} \label{ssec:notation} Let us introduce some of the notation that will be used throughout the work. 

Since we work with kernels $K$ that are not necessarily absolutely continuous, from now on $K(dy)$ will denote a nonnegative Radon measure in $\R^n$.
Moreover, given a measure $K(dy)$, we denote by $K(a+r\, dy)$ the measure $\tilde K(dy)$ such that $\tilde K(B) = K(a+r B)$ for all $B\subset\R^n$ Borel. 

We also denote by $L^\infty_{\tau}(\R^n)$ the space of functions with bounded growth at infinity. Namely, given $\tau \ge 0$, we say that $w\in L^\infty_{\tau}(\R^n)$ if 
\[
\|w\|_{L^\infty_{\tau}(\R^n)} := \left\|\frac{w(x)}{1+|x|^\tau}\right\|_{L^\infty(\R^n)}<\infty.
\]

Finally, given $\alpha > 0$, we denote $C^{\alpha+}(\Omega) := \bigcup_{\eps >0}C^{\alpha+\eps}(\Omega)$. 

We next introduce the general class of elliptic operators $\GL$ (and its more regular counter-part, $\G_s(\lambda, \Lambda; \alpha)$); see Definition~\ref{defi:G}.

\subsection{Translation invariant operators} 

In the simplest case of operators without $x$-dependence, we have
\begin{equation}
\label{eq:Lu_nu}
\begin{split}
 \L u(x) &=   {\rm P.V.}\int_{\R^n} \big(u(x) - u(x+y)\big)K (dy) \\
 & =\frac12 \int_{\R^n} \big(2u(x) - u(x+y) - u(x-y)\big)K (dy),
 \end{split}
\end{equation}
where $K(dy)$ satisfies
\begin{equation}
\label{eq:nu_cond}
\text{$K\ge 0$ is symmetric, \quad $K(\{0\}) = 0,$}\quad \int_{\R^n} \min\{1, |y|^2\} \,K(dy) <\infty. 
\end{equation}

Operators like the ones above have a natural Fourier representation in terms of a Fourier multiplier $\A$. That is, in Fourier space, there exists a function $\A:\R^n\to \R$ such that
\[
\F(\L u)(\xi) = \A(\xi) \F(u)(\xi). 
\]

The weakest ellipticity assumption on $\L$ to obtain  interior regularity estimates of order $2s$ is given by a condition on their Fourier symbol, $\A$. 
Namely, we say that $\L$ is elliptic of order $2s$ with ellipticity constants $\tilde\Lambda$ and $\tilde\lambda$ in Fourier space if 
\begin{equation}
\label{eq:GSLO}
0 < \tilde \lambda |\xi|^{2s} \le \A(\xi) \le \tilde \Lambda |\xi|^{2s}\quad\text{for all}\quad \xi\in \R^n. 
\end{equation}

In physical space, such a condition is equivalent (see \cite{FR23}) to the existence of some $\lambda > 0$ and $\Lambda > 0$ such that 
\begin{equation}
\label{eq:Kellipt_gen_L} \index{Ellipticity!General operators}
r^{2s} \int_{B_{2r}\setminus B_r} K(dy) \le \Lambda\qquad\text{for all}\quad r  >0,
\end{equation}
and
\begin{equation}
\label{eq:Kellipt_gen_l}
r^{2s-2}\inf_{e\in \S^{n-1}}\int_{B_{r}}|e\cdot y|^{2} K(dy) \geq \lambda>0 \qquad\text{for all}\quad r  >0.
\end{equation}

On the other hand, given an operator as above, there is a natural way to assign it a H\"older-type regularity in the integral sense on its kernel, by considering the semi-norms: 
\begin{equation}\label{Calpha-assumption}
[\L]_\alpha = [K]_\alpha := \sup_{\rho > 0}  \sup_{x, x'\in B_{\rho/2}}\rho^{2s+\alpha}\int_{B_{2\rho}\setminus B_\rho}\frac{\big|K(x-dy)-K(x'-dy)\big|}{|x-x'|^{\alpha}},
\end{equation}

With the previous definitions, we introduce the most general class of translation invariant elliptic operators of order $2s$ under which one expects classical interior regularity estimates to equations of the form \eqref{linear}: 

\begin{defi}[General elliptic operator]
\label{defi:G}
Let $s\in (0, 1)$ and $\lambda, \Lambda > 0$. 
We define
\[
\GL :=\left\{\L : \begin{array}{l}
\text{$\L$ is an operator of the form \eqref{eq:Lu_nu}-\eqref{eq:nu_cond}}\\
\text{such that \eqref{eq:Kellipt_gen_L}-\eqref{eq:Kellipt_gen_l} hold}
\end{array}
 \right\}.
\]
Furthermore, given $\alpha\in(0, 1]$, we define the class
\[
\G_s(\lambda, \Lambda; \alpha) :=\big\{\L \in \GL : [\L]_\alpha < \infty\big\}
\]
of operators in $\GL$ with regular kernels, in the sense of \eqref{Calpha-assumption}. 
\end{defi}

\subsection{Some useful results} The following are known results that we recall here for the convenience of the reader. Most of them can be found (together with proofs) in \cite{FR23}. The first is a result regarding the boundedness of $\L  u$ when $u$ is regular.  

\begin{lem}[\cite{FR23}]
\label{lem:Lu}
Let $s\in (0, 1)$ and $\L\in \GL$. Let   $u\in C^{2s+\eps}(B_1)\cap L^\infty_{2s-\eps}(\R^n)$ for some $\eps > 0$. Then $\L u\in L^\infty_{\rm loc}(B_1)$ with 
\[
\|\L u\|_{L^\infty(B_{1/2})} \le C \Lambda \left(\|u\|_{C^{2s+\eps}(B_1)} + \|u\|_{L^{\infty}_{2s-\eps}(\R^n)} \right) 
\]
for some $C$ depending only on $n$, $s$, and $\eps$.
\end{lem}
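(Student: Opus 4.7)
The plan is to bound $\L u(x)$ pointwise for $x\in B_{1/2}$ by splitting the integral defining $\L u$ into a near part on $B_{1/2}$ and a far part on $\R^n\setminus B_{1/2}$, and using the symmetric form
\[
\L u(x) = \frac12\int_{\R^n}\bigl(2u(x) - u(x+y) - u(x-y)\bigr)K(dy).
\]
Since $K$ is even, this identity is legitimate (no P.V.\ issue once we exploit symmetry) and is what allows us to take advantage of second-order cancellation.

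For the near part, the observation is that both $x+y$ and $x-y$ lie in $B_1$ when $x\in B_{1/2}$ and $y\in B_{1/2}$, so we may use the $C^{2s+\eps}$ regularity of $u$ on $B_1$. By a Taylor expansion (if $2s+\eps>1$, using the symmetric cancellation of the linear term; if $2s+\eps\le 1$, directly by H\"older) together with the trivial estimate $|y|^{2s+\eps}\wedge |y|^2$ for large exponents, one obtains
\[
\bigl|2u(x)-u(x+y)-u(x-y)\bigr|\le C\|u\|_{C^{2s+\eps}(B_1)}\,|y|^{\min(2s+\eps,\,2)}\qquad\text{for }y\in B_{1/2}.
\]
Because $\min(2s+\eps,2)>2s$ strictly, a dyadic decomposition of $B_{1/2}$ into annuli $B_{2^{-k-1}}\setminus B_{2^{-k-2}}$, combined with the ellipticity bound \eqref{eq:Kellipt_gen_L} giving $K(B_{2r}\setminus B_r)\le \Lambda r^{-2s}$, produces a geometric series that sums to $C\Lambda\|u\|_{C^{2s+\eps}(B_1)}$.

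For the far part, we simply use the pointwise growth bound
\[
\bigl|2u(x)-u(x+y)-u(x-y)\bigr|\le C\|u\|_{L^\infty_{2s-\eps}(\R^n)}\bigl(1+|y|^{2s-\eps}\bigr),
\]
and integrate against $K(dy)$ over $\R^n\setminus B_{1/2}$. Again a dyadic decomposition into annuli $B_{2^{k+1}}\setminus B_{2^{k}}$ for $k\ge -1$, together with the upper ellipticity bound, yields a geometric series with ratio $2^{-\eps}$ (since $2s-\eps<2s$) that converges to $C\Lambda\|u\|_{L^\infty_{2s-\eps}(\R^n)}$. Adding the two contributions gives the claimed estimate. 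There is no real obstacle here; the only mild subtlety is making sure the Taylor-cancellation estimate is written uniformly across the three regimes $2s+\eps\in(0,1]$, $(1,2]$, $(2,3]$, and this is handled by the $\min(2s+\eps,2)$ exponent bound combined with using whichever of $\nabla u$ or $D^2u$ is controlled.
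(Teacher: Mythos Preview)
Your proof is correct and is the standard argument; the paper does not give its own proof of this lemma (it is quoted from \cite{FR23}), and what you wrote is exactly the expected proof there: symmetrize, split at scale $\sim 1$, use the second-order Taylor/H\"older bound $|2u(x)-u(x+y)-u(x-y)|\le C\|u\|_{C^{2s+\eps}(B_1)}|y|^{\min(2s+\eps,2)}$ near the origin and the growth bound far away, then sum the dyadic annuli using the upper ellipticity \eqref{eq:Kellipt_gen_L}. There is nothing to add.
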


Followed by a result on the regularity of $\L u$, again when $u$ is regular.

\begin{lem}[\cite{FR23}]
\label{lem:Lu_2}
Let $s\in (0, 1)$ and $\alpha\in (0, 1)$. Then:
\begin{enumerate}[leftmargin=*,label=(\roman*)]
\item \label{it:lem_Lu2_i} If $\L\in \G_s(\lambda, \Lambda; \alpha)$ (see Definition~\ref{defi:G}), then for any $u\in C^{2s+\alpha}(B_1)\cap L^\infty_{2s-\eps}(\R^n)$ with $\eps > 0$ we have  $\L u\in C_{\rm loc}^\alpha(B_{1})$ and
\[
[\L u]_{C^\alpha (B_{1/2})} \le C  \left(\Lambda \|u\|_{C^{2s+\alpha}(B_1)} + [\L]_\alpha \|u\|_{L^{\infty}_{2s-\eps}(\R^n)}\right),
\]
for some $C$ depending only on $n$, $s$, $\eps$, and $\alpha$. 
\item \label{it:lem_Lu2_ii} If $\L\in \GL$, then for any $u\in C^{2s+\alpha}(B_1)\cap C^\alpha(\R^n)$ we have $\L u\in C_{\rm loc}^\alpha(B_{1})$ and
\[
[\L u]_{C^\alpha (B_{1/2})} \le C \Lambda \left(\|u\|_{C^{2s+\alpha}(B_1)} +[u]_{C^{\alpha}(\R^n)}\right),
\]
with $C$ depending only on $n$, $s$, and $\alpha$.
 
\item \label{it:lem_Lu2_iii} If $\gamma\in (0, \alpha)$   and $\L\in \G_s(\lambda, \Lambda; \alpha-\gamma)$, then for any $u\in C^{2s+\alpha}(B_1)\cap C^\gamma(\R^n)$ we have $\L u\in C_{\rm loc}^\alpha(B_{1})$ and
\begin{equation}
\label{eq:rem_Lu22}
[\L u]_{C^\alpha (B_{1/2})} \le C  \left(\Lambda \|u\|_{C^{2s+\alpha}(B_1)} + [\L]_{\alpha-\gamma} \left(\|u\|_{L^\infty_{2s-\eps}(\R^n)} + [u]_{C^\gamma(\R^n)}\right)\right),
\end{equation}
for some $C$ depending only on $n$, $s$, $\eps$,  $\alpha$, and $\gamma$. 
\end{enumerate}
\end{lem}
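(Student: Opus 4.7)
The plan is to prove all three parts by estimating $|\L u(x) - \L u(x')|$ directly for $x, x' \in B_{1/2}$ with $h := x' - x$ small. Writing $\delta^2 u(x, y) := 2u(x) - u(x+y) - u(x-y)$, the common strategy is to split $\L u(x) = \frac{1}{2} \int \delta^2 u(x, y) \, K(dy)$ into a near-field piece on $|y| \leq 1$, where $u$ has $C^{2s+\alpha}$ regularity, and a far-field piece on $|y| > 1$, where $u$ is only controlled by a growth condition or a global H\"older seminorm. Both pieces are estimated by dyadic summation against $K$ using the ellipticity estimate \eqref{eq:Kellipt_gen_L}.

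For Part~\ref{it:lem_Lu2_i}, write
\[
\L u(x) - \L u(x') = \frac{1}{2}\int \bigl[\delta^2 u(x, y) - \delta^2 u(x', y)\bigr] K(dy).
\]
In the near-field, the integrand is bounded by an interpolation between the pointwise bound $|\delta^2 u(\cdot, y)| \leq C [u]_{C^{2s+\alpha}(B_1)} |y|^{2s+\alpha}$ and the $x$-regularity of $\delta^2 u(\cdot, y)$ (which is $C^\alpha$ in $x$ with seminorm controlled by a $|y|^{2s}$ weight). Splitting the near-field integration at the scale $|h|$ and summing dyadically against $K$ yields a contribution of order $\Lambda [u]_{C^{2s+\alpha}(B_1)} |h|^\alpha$. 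In the far-field, the $x$-dependence in $u(x+y)$ is transferred, via a change of variables, to a shifted copy of the measure $K$; the variation of this shifted measure in $x$ is exactly what the seminorm $[\L]_\alpha$ in \eqref{Calpha-assumption} is designed to control, and pairing this with the growth of $u$ at infinity gives the contribution $[\L]_\alpha \|u\|_{L^\infty_{2s-\eps}(\R^n)} |h|^\alpha$.

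For Part~\ref{it:lem_Lu2_ii} we have no kernel regularity but a global $C^\alpha$ assumption on $u$. The key observation is the translation invariance of $\L$: setting $D_h u(z) := u(z+h) - u(z)$, one has $\L u(x+h) - \L u(x) = \L(D_h u)(x)$. The function $D_h u$ is globally bounded by $[u]_{C^\alpha(\R^n)} |h|^\alpha$ and locally inherits the $C^{2s+\alpha}$ regularity of $u$. Splitting $\L(D_h u)(x)$ at scale $1$, the far-field is bounded by $C\Lambda [u]_{C^\alpha(\R^n)} |h|^\alpha$ using $\int_{|y| > 1} K(dy) \leq C\Lambda$; the near-field uses the identity $\delta^2(D_h u)(x, y) = \delta^2 u(x+h, y) - \delta^2 u(x, y)$ together with the same interpolation estimate as in Part~\ref{it:lem_Lu2_i}. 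Part~\ref{it:lem_Lu2_iii} combines these two techniques: the translation trick yields a far-field rate of order $[u]_{C^\gamma(\R^n)} |h|^\gamma$, and the kernel's $C^{\alpha-\gamma}$ regularity, quantified by $[\L]_{\alpha-\gamma}$, supplies the extra $|h|^{\alpha-\gamma}$ factor to reach the sharp $|h|^\alpha$.

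\textbf{Main obstacle.} The principal difficulty is obtaining the sharp exponent $|h|^\alpha$ rather than a strictly smaller power. A naive application of Lemma~\ref{lem:Lu} to $D_h u$ in Part~\ref{it:lem_Lu2_ii} yields only a bounded estimate, so genuine cancellation in $\delta^2(D_h u)$ must be exploited. Moreover, the dyadic sum against $K$ under only the ellipticity \eqref{eq:Kellipt_gen_L} is delicate: integrals like $\int_{|y|\leq 1} |y|^{2s} K(dy)$ are borderline divergent, so one must split the near-field at the scale $|h|$ and use distinct bounds on $\delta^2$ on each side of this scale (the pointwise $|y|^{2s+\alpha}$-bound for $|y| \leq |h|$, and the $|h|^\alpha$-smallness from the $C^\alpha$ variation for $|y| > |h|$). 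In Part~\ref{it:lem_Lu2_iii}, combining both mechanisms without spurious logarithmic losses requires particular care in calibrating the weights.
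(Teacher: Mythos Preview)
The paper does not prove this lemma: it is quoted from \cite{FR23}, so there is no ``paper's own proof'' to compare against. Your plan for parts~\ref{it:lem_Lu2_i} and~\ref{it:lem_Lu2_ii} is correct and is the standard route: the near-field is handled by the interpolated second-difference bounds of the type in Lemma~\ref{lem:A_imp_2} (splitting at the scale $|h|$ is exactly right, and one must pick the interpolation exponent so that the dyadic sum against $\Lambda\rho^{-2s}$ actually converges), while the far-field uses, respectively, the kernel shift controlled by $[\L]_\alpha$ and the translation identity $\L u(\cdot+h)-\L u = \L(D_h u)$.

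There is, however, a genuine gap in your plan for part~\ref{it:lem_Lu2_iii}. You write that the translation trick gives a far-field rate $[u]_{C^\gamma}|h|^\gamma$ and that the kernel's $C^{\alpha-\gamma}$ regularity then ``supplies the extra $|h|^{\alpha-\gamma}$ factor''. These are two \emph{alternative} estimates for the \emph{same} first increment $\int D_h u(x+y)\,K(dy)$: either you bound $|D_h u|\le [u]_{C^\gamma}|h|^\gamma$ and integrate against $K$, or you change variables to produce $\int u(x+y)\,[K(dy-h)-K(dy)]$ and use $[\L]_{\alpha-\gamma}$. Each gives one of the two factors, but you cannot get their product on a single increment; the best you obtain this way is $|h|^{\max\{\gamma,\alpha-\gamma\}}$, not $|h|^\alpha$.

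One way to close the gap (after the cutoff $u=u_1+u_2$ with $u_2\equiv 0$ near the origin) is to estimate a \emph{second} increment of $\L u_2$: writing out $\L u_2(x+2h)-2\L u_2(x+h)+\L u_2(x)$ one finds, after a change of variables, the expression $-\int D_h u_2(x+y)\,[K(dy-h)-K(dy)]$, where now the translation trick bounds $|D_h u_2|\le [u]_{C^\gamma}|h|^\gamma$ and the kernel difference contributes $[\L]_{\alpha-\gamma}|h|^{\alpha-\gamma}\rho^{-2s-(\alpha-\gamma)}$ on each annulus. The two factors now genuinely multiply, the dyadic sum converges, and one obtains $|\delta^2_h(\L u_2)|\le C[\L]_{\alpha-\gamma}[u]_{C^\gamma}|h|^\alpha$. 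Combined with the first-increment bound $|D_h(\L u_2)|\le C[\L]_{\alpha-\gamma}\|u\|_{L^\infty_{2s-\eps}}|h|^{\alpha-\gamma}$ (this is just the kernel-shift estimate with the growth of $u_2$), the standard Zygmund-to-H\"older iteration for $\alpha\in(0,1)$ yields $[\L u_2]_{C^\alpha}\le C[\L]_{\alpha-\gamma}\big([u]_{C^\gamma}+\|u\|_{L^\infty_{2s-\eps}}\big)$, which is the claimed bound.
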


\begin{rem}
\label{rem:nonneg_kernels}
 Lemmas~\ref{lem:Lu} and \ref{lem:Lu_2}   do not require the lower ellipticity assumptions, \eqref{eq:Kellipt_gen_l}, which can be seen  from the lack of dependence on $\lambda$; nor the nonnegativity of the kernel. We only need that $K$ is even and
\[
r^{2s}\int_{B_{2r}\setminus B_r} |K(dy)|  \le \Lambda < \infty\qquad\text{for all}\quad r > 0,
\]
as well as the regularity assumption \eqref{Calpha-assumption} in Lemma~\ref{lem:Lu_2}-\ref{it:lem_Lu2_i} and \ref{it:lem_Lu2_iii}.
\end{rem}

The following is a Liouville theorem for solutions to translation invariant nonlocal equations.

\begin{thm}[Liouville-type theorem, \cite{FR23}]
\label{thm:Liouville}
Let $s\in (0,1)$ and $\L\in \GL$. Let $u\in L^\infty_\beta(\R^n)$ for some $\beta\in [0, 2s)$ be a distributional or weak solution to 
\[
\L u = 0 \quad \text{in}\quad \R^n.
\]
Then, $u(x) = a\cdot x + b$, with $b = 0$ if $\beta < 1$.
\end{thm}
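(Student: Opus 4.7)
The plan is a blow-down / rescaling argument combined with interior Hölder regularity for translation-invariant operators in $\GL$. The key external input is the interior Hölder estimate established in \cite{FR23}: there exist $\alpha_0 = \alpha_0(n,s,\lambda,\Lambda) \in (0,2s)$ and $C>0$ such that whenever $\L \in \GL$ and $v \in L^\infty_\beta(\R^n)$ satisfies $\L v = 0$ in $B_1$, one has $[v]_{C^{\alpha_0}(B_{1/2})} \le C\|v\|_{L^\infty_\beta(\R^n)}$.

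First I would exploit the scale-invariance of the class $\GL$. For $R \ge 1$ set $v_R(x) := u(Rx)$. A direct change of variables shows that $v_R$ is a distributional solution of $\L_R v_R = 0$ on $\R^n$ with rescaled kernel $K_R(dy) = R^{2s} K(R\,dy)$; the ellipticity conditions \eqref{eq:Kellipt_gen_L}--\eqref{eq:Kellipt_gen_l} are scale-invariant, so $\L_R \in \GL$ with the \emph{same} constants $\lambda,\Lambda$. Moreover $\|v_R\|_{L^\infty_\beta(\R^n)} \le R^\beta \|u\|_{L^\infty_\beta(\R^n)}$ for $R \ge 1$. Applying the Hölder estimate to $v_R$ and unfolding the scaling, I obtain the global control
\[
[u]_{C^{\alpha_0}(B_{R/2})} \le C R^{\beta-\alpha_0} \|u\|_{L^\infty_\beta(\R^n)} \qquad \text{for all } R \ge 1.
\]

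When $\beta < \alpha_0$ the right-hand side vanishes as $R \to \infty$, forcing $u$ to be constant; this handles the base case (and in particular $\beta = 0$). For general $\beta < 2s$ I would iterate on incremental quotients. By translation invariance of $\L$, both the first difference $w_h(x) := u(x+h)-u(x)$ and---using that an even symmetric $K$ sends affine functions to zero---the second difference $\Delta_h^2 u(x) := u(x+h)-2u(x)+u(x-h)$ are distributional solutions of $\L(\cdot)=0$ on $\R^n$. The displayed bound implies that $w_h$ has growth exponent at most $\beta - \alpha_0 < \beta$, and similarly for $\Delta_h^2 u$. Iterating finitely many times---each step lowering the growth exponent by $\alpha_0$---reduces the problem to the base case: after sufficiently many iterations the resulting iterated difference has growth exponent $<\alpha_0$, hence is constant, and decaying at infinity it must vanish identically. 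For $\beta<1$ this forces $u$ to be constant (i.e.\ affine with $a=0$); for $\beta \in [1,2s)$ the argument applied to $\Delta_h^2 u$ instead gives $\Delta_h^2 u \equiv 0$ for every $h$, so $u$ is affine.

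The main obstacle is bookkeeping the successive growth exponents in the iteration and checking that each intermediate function is still a genuine distributional solution of $\L(\cdot)=0$ with finite $L^\infty_\gamma$ norm for the relevant $\gamma < 2s$. Both come essentially for free from translation invariance and the global Hölder bound above, but have to be tracked carefully; in particular, one must verify at the outset that the distributional formulation is stable under translations and finite differences, so that no regularity bootstrap is needed before forming $w_h$ and $\Delta_h^2 u$.
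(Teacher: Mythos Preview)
The paper does not prove this theorem; it is quoted as a known result from \cite{FR23}, so there is no in-paper argument to compare against. Your scaling-plus-iteration strategy is correct and is one of the standard routes to such Liouville theorems: the rescaling yields $[u]_{C^{\alpha_0}(B_{R/2})}\le CR^{\beta-\alpha_0}$ uniformly in $R\ge 1$, and passing to successive increments (each a solution, by linearity and translation invariance) lowers the growth exponent by $\alpha_0$ at every step.

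Two small remarks. First, your parenthetical ``using that an even symmetric $K$ sends affine functions to zero'' is not needed to see that $\Delta_h^2 u$ solves the equation---this follows from linearity and translation invariance alone. Second, once the iterated difference has growth exponent below $\alpha_0$ you correctly conclude it is constant, but a constant need not \emph{decay} unless the exponent is in fact negative; the clean way to finish is either to iterate once more (so the exponent drops below $0$, giving a decaying constant, hence zero), or---more directly---to invoke that a function whose $m$-th differences are constant for every $h$ is a polynomial of degree at most $m$ (Lemma~\ref{it:H10} in the paper), and then let the growth bound $\beta<2s<2$ force the degree to be at most one (and zero when $\beta<1$).
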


And we next recall the following interior regularity estimate for general translation invariant nonlocal equations:

\begin{thm}[\cite{FR23}]\label{thm-interior-linear-2}
Let $s\in (0,1)$ and $\L\in \G_s(\lambda, \Lambda; \alpha)$ for some $\alpha>0$ such that $2s+\alpha\notin\N$. Let $f\in C^\alpha(B_1)$, and let $u\in L^\infty_{2s-\eps}(\R^n)$ for some $\eps > 0$ be a distributional solution of
\[
\L u = f \quad\text{in}\quad B_1.
\]

Then, $u\in C_{\rm loc}^{2s+\alpha}(B_{1})$ with
\[
\|u\|_{C^{2s+\alpha}(B_{1/2})} \le C\left(\|u\|_{L^\infty_{2s-\eps}(\R^n)} + \|f\|_{C^\alpha(B_1)}\right)
\]
for some $C$ depending only on $n$, $s$, $\alpha$, $\eps$,  $\lambda$, $\Lambda$, and $[\L]_\alpha$. 
\end{thm}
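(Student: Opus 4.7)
The plan is to argue by contradiction through a blow-up/compactness scheme, and to close the argument with the Liouville-type Theorem~\ref{thm:Liouville} extended to higher polynomial growth via iterated discrete differences. Set $m:=\lfloor 2s+\alpha\rfloor$, which is unambiguous since $2s+\alpha\notin \N$.

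Suppose the asserted estimate fails. Then one produces sequences $\L_k\in \G_s(\lambda,\Lambda;\alpha)$ with uniformly bounded $[\L_k]_\alpha$, right-hand sides $f_k\in C^\alpha(B_1)$, and solutions $u_k\in L^\infty_{2s-\eps}(\R^n)$ of $\L_k u_k=f_k$ in $B_1$ with normalized data and $[u_k]_{C^{2s+\alpha}(B_{1/2})}\to\infty$. Define, for $r>0$,
\[
\Theta_k(r):=\sup_{r'\ge r}\ \sup_{x_0\in B_{1/2}}\ \inf_{P\in\mathcal{P}_m}(r')^{-(2s+\alpha)}\|u_k-P\|_{L^\infty(B_{r'}(x_0)\cap B_{1/2})},
\]
which is non-increasing in $r$ and blows up as $r\to 0^+$ under the contradiction hypothesis. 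Choose $r_k\downarrow 0$, $x_k\in \overline{B_{1/2}}$, and minimizing polynomials $P_k$ almost realizing the supremum, and set
\[
v_k(y):=\frac{(u_k-P_k)(x_k+r_k y)}{\Theta_k(r_k)\, r_k^{2s+\alpha}}.
\]
By construction $\|v_k\|_{L^\infty(B_1)}$ stays bounded away from $0$ and $\infty$, and the sub-optimality of $\Theta_k$ at scales $r\ge r_k$ yields the polynomial growth bound $\|v_k\|_{L^\infty(B_R)}\le C(1+R)^{2s+\alpha}$ for all $R\ge 1$. Moreover $v_k$ solves $\tilde\L_k v_k=\tilde f_k$ in $B_{1/(4r_k)}$, where $\tilde\L_k$ is the $r_k$-rescaling of $\L_k$ (still lying in $\GL$), and $\tilde f_k\to 0$ locally uniformly, using $\Theta_k(r_k)\to\infty$ together with Lemma~\ref{lem:Lu} applied to the polynomial $P_k$, whose coefficients are controlled by the $L^\infty$-minimality and the a priori bound on $\|u_k\|_{L^\infty_{2s-\eps}(\R^n)}$.

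The interior estimates built into the class $\GL$ furnish local equicontinuity of $\{v_k\}$, so along a subsequence $v_k\to v_\infty$ locally uniformly. The uniform bounds \eqref{eq:Kellipt_gen_L}--\eqref{eq:Kellipt_gen_l} give enough tightness for the rescaled kernels $\tilde K_k(dy)$ to admit a vague subsequential limit $K_\infty(dy)$ generating an operator $\L_\infty\in\GL$; passing to the distributional limit yields $\L_\infty v_\infty=0$ in $\R^n$ with $v_\infty\in L^\infty_{2s+\alpha}(\R^n)$. The contradiction step is then an extension of Theorem~\ref{thm:Liouville} to polynomial growth of order $2s+\alpha$: for any $h\in\R^n$ the $(m+1)$-fold translation difference $\delta^{m+1}_h v_\infty$ still satisfies the translation-invariant equation $\L_\infty(\delta^{m+1}_h v_\infty)=0$ in $\R^n$ and has growth strictly less than $2s$, so Theorem~\ref{thm:Liouville} forces it to be affine for every $h$; standard algebra then forces $v_\infty$ to be a polynomial of degree $\le m$. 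Finally, the $L^\infty$-minimality of $P_k$ transfers in the limit to the statement that no polynomial of degree $\le m$ improves $\|v_\infty\|_{L^\infty(B_1)}$, whence $v_\infty\equiv 0$, contradicting the lower bound on $\|v_\infty\|_{L^\infty(B_1)}$.

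The main obstacle is the extension of the Liouville-type result to polynomial growth $2s+\alpha$: one must choose the discrete difference of order $m+1$ (which commutes with $\L_\infty$ by translation invariance) so as to drop into the regime $<2s$ covered by Theorem~\ref{thm:Liouville}, and then invert the operation to recover the polynomial structure of $v_\infty$; care is required for the edge cases where $2s+\alpha$ crosses an integer, excluded by the hypothesis $2s+\alpha\notin\N$. A secondary technical point is ensuring the vague limit $K_\infty$ of the rescaled kernels retains both the upper bound \eqref{eq:Kellipt_gen_L} and, more delicately, the uniform non-degeneracy \eqref{eq:Kellipt_gen_l}, so that $\L_\infty\in\GL$ and the equation genuinely passes to the limit.
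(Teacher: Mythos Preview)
This theorem is not proved in the paper: it is stated in the preliminaries (Section~\ref{sec:prel_steps}) as a known result from \cite{FR23}, so there is no in-paper argument to compare against. Your blow-up/compactness scheme is the standard route and is in the spirit of the arguments the paper does carry out elsewhere (cf.\ Propositions~\ref{prop:compactness-x-div} and \ref{prop:interior-linear-x-div}, which are organized around Lemma~\ref{lem-interior-blowup}).

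There is, however, a genuine gap in your Liouville extension step. You assert that the $(m+1)$-fold discrete difference $\delta_h^{m+1}v_\infty$ has growth strictly less than $2s$, but this does not follow from $v_\infty\in L^\infty_{2s+\alpha}(\R^n)$ alone: for a merely measurable $w$ with $|w(x)|\le C(1+|x|)^\gamma$ one only gets $|\delta_h w(x)|\le C'(1+|x|)^\gamma$, with no drop in the exponent. Discrete differences reduce polynomial growth only when accompanied by commensurate \emph{regularity} (so that $\delta_h w$ is controlled by a derivative of lower growth). To close the argument you must first invoke a lower-order interior estimate for translation-invariant operators in $\GL$ --- available independently of the $[\L]_\alpha$ hypothesis, e.g.\ via the symbol condition $\A(\xi)\asymp|\xi|^{2s}$ --- apply it on balls $B_R$, and rescale to obtain $|D^j v_\infty(x)|\le C(1+|x|)^{2s+\alpha-j}$; only then does iterated differencing land you in the regime of Theorem~\ref{thm:Liouville}. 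The same missing input is hiding behind your appeal to ``interior estimates built into the class $\GL$'' for the equicontinuity of $\{v_k\}$. Note that the paper's own blow-up device, Lemma~\ref{lem-interior-blowup}, sidesteps this issue entirely by normalizing so that $[v_k]_{C^\mu(\R^n)}=1$, which passes directly to a global H\"older bound on the limit and makes the growth reduction under differencing automatic.
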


We finally state the following technical lemma, that will be useful to establish regularity estimates in general settings.

\begin{lem}[\cite{FR23}]\label{lem-interior-blowup}
Let $\mu>0$ with $\mu\notin \mathbb Z$, let $\delta > 0$, and let us consider $\mathcal S: C^\mu(\R^n)\to\R_{\ge 0}$.
Then,
\begin{enumerate}[leftmargin=*, label=(\roman*)]
\item \label{it:lem_int_blowup_i} either we have
\[[u]_{C^\mu(B_{1/2})} \leq \delta [u]_{C^\mu(\R^n)} + C_\delta\big(\|u\|_{L^\infty(B_1)} + \mathcal S(u)\big)\]
for all $u\in C^\mu(\R^n)$, for some $C_\delta$ depending only on $\mu$, $\mathcal S$, and $\delta$,

\vspace{1mm}

\item \label{it:lem_int_blowup_ii} or there is a sequence $u_k\in C^\mu(\R^n)$, with 
\begin{equation}\label{lem-interior-blowup2}
\frac{\mathcal S(u_k)}{[u_k]_{C^\mu(B_{1/2})}} \longrightarrow 0,
\end{equation}
and there are $r_k\to0$, $x_k\in B_{1/2}$, such that if we define
\begin{equation}\label{lem-interior-blowup3}
v_k(x) := \frac{u_k(x_k+r_k x)}{r_k^\mu[u_k]_{C^\mu(\R^n)}},
\end{equation}
and we denote by $p_k$ the $\nu$-th order Taylor polynomial of $v_k$ at 0, then 
\[
\|v_k - p_k\|_{C^\nu(B_1)} > \frac{\delta}{2}
\]
for $k$ large enough, where $\nu=\lfloor \mu\rfloor$.
\end{enumerate}
\end{lem}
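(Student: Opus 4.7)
The plan is to prove the dichotomy by contradiction. Suppose \ref{it:lem_int_blowup_i} fails: there is a fixed $\delta > 0$ and a sequence $u_k\in C^\mu(\R^n)$ such that, for each $k\in\N$,
\[
[u_k]_{C^\mu(B_{1/2})} > \delta \,[u_k]_{C^\mu(\R^n)} + k\big(\|u_k\|_{L^\infty(B_1)} + \mathcal{S}(u_k)\big).
\]
The right-hand side is strictly positive, so $[u_k]_{C^\mu(\R^n)} > 0$. Setting $\tilde u_k := u_k/[u_k]_{C^\mu(\R^n)}$ and using the trivial inequality $[u_k]_{C^\mu(B_{1/2})} \le [u_k]_{C^\mu(\R^n)}$, I obtain the normalizations $[\tilde u_k]_{C^\mu(\R^n)} = 1$, $[\tilde u_k]_{C^\mu(B_{1/2})} > \delta$, and $\|\tilde u_k\|_{L^\infty(B_1)} \le 1/k$. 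Since the same inequality gives $\mathcal{S}(u_k)/[u_k]_{C^\mu(\R^n)} \le 1/k$, combined with $[u_k]_{C^\mu(B_{1/2})} \ge \delta[u_k]_{C^\mu(\R^n)}$ this yields $\mathcal{S}(u_k)/[u_k]_{C^\mu(B_{1/2})} \le 1/(k\delta) \to 0$, which is condition \eqref{lem-interior-blowup2}.

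To locate the blow-up, I combine two facts. First, since $\mu\notin\Z$, we have $\nu<\mu$, and hence the standard interpolation inequality between $L^\infty(B_1)$ and $C^\mu(\R^n)$ gives $\|\tilde u_k\|_{C^\nu(B_{1/2})} \to 0$. Second, the strict inequality $[\tilde u_k]_{C^\mu(B_{1/2})} > \delta$ produces near-maximizers: a multi-index $\alpha_k$ with $|\alpha_k|=\nu$ and distinct $x_k,y_k\in B_{1/2}$ satisfying
\[
\big|\partial^{\alpha_k}\tilde u_k(x_k) - \partial^{\alpha_k}\tilde u_k(y_k)\big| > (\delta - 1/k)\,|x_k-y_k|^{\mu-\nu}.
\]
Since the left-hand side is bounded by $2\|\tilde u_k\|_{C^\nu(B_{1/2})}\to 0$ and $\mu-\nu\in(0,1)$, we conclude $|x_k-y_k|\to 0$. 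I then set $r_k := (1+1/k)|x_k - y_k|$, so that $r_k\to 0$.

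Finally, I define $v_k$ as in \eqref{lem-interior-blowup3}, which here equals $\tilde u_k(x_k + r_k x)/r_k^\mu$, and set $z_k := (y_k - x_k)/r_k$, so $|z_k| = 1/(1+1/k) < 1$. Let $p_k$ be the Taylor polynomial of degree $\nu$ of $v_k$ at $0$. For any $|\alpha| = \nu$, the derivative $\partial^\alpha p_k$ is the constant $\partial^\alpha v_k(0)$; hence a direct scaling computation gives
\[
\partial^{\alpha_k}(v_k - p_k)(z_k) = r_k^{\nu-\mu}\big(\partial^{\alpha_k}\tilde u_k(y_k) - \partial^{\alpha_k}\tilde u_k(x_k)\big),
\]
whose absolute value exceeds $(\delta - 1/k)(1+1/k)^{\nu-\mu}$. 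This quantity is strictly larger than $\delta/2$ for $k$ large enough, and since $z_k\in B_1$ we obtain $\|v_k - p_k\|_{C^\nu(B_1)} > \delta/2$, as required.

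The only genuinely delicate step is forcing $r_k\to 0$, for which the interpolation inequality is indispensable; this is precisely where the hypothesis $\mu\notin\Z$ enters, ensuring the subcritical decay of $\|\tilde u_k\|_{C^\nu(B_{1/2})}$. The remainder of the argument is bookkeeping with the scaling factor $r_k^{\nu-\mu}$ and the choice of $r_k$ slightly larger than $|x_k - y_k|$ to guarantee $z_k$ lies in the open ball $B_1$.
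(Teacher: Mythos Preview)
The paper does not include a proof of this lemma; it is quoted from \cite{FR23} as a known tool. Your argument is correct and follows the standard blow-up dichotomy one would expect: negate \ref{it:lem_int_blowup_i}, normalize so that $[\tilde u_k]_{C^\mu(\R^n)}=1$ while $\|\tilde u_k\|_{L^\infty(B_1)}\to 0$, use interpolation (this is where $\mu\notin\Z$ enters) to force the near-maximizing pair $x_k,y_k$ for the $C^\mu(B_{1/2})$ seminorm to collapse, and then rescale. One small wording slip: it is the \emph{left}-hand side that is strictly positive (being strictly larger than the nonnegative right-hand side), which is what gives $[u_k]_{C^\mu(\R^n)}>0$; the conclusion is unaffected.
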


\subsection{Viscosity solutions}

In the following, we define the notion of viscosity (sub-/super-)solution to non-divergence-form equations \eqref{linear}.

\begin{defi}[Viscosity solutions] 
\label{defi:viscosity}
Let $s\in (0, 1)$,   let $\L$  be an operator  of the form \eqref{x-dependence-L-nu} with $\L_x \in \GL$ for all $x\in \R^n$, and let  $f\in C(\Omega)$.
\begin{itemize}[leftmargin=1cm]
\item We say that $u\in {\rm USC}({\Omega})\cap L^\infty_{2s-\eps}(\R^n)$ for some $\eps > 0$ is a \emph{viscosity subsolution} to \eqref{linear} in $\Omega$, and we denote   
\[\L (u, x) \le f(x)\quad  \textrm{in}\quad \Omega,\] 
if for any $x\in \Omega$ and any neighborhood of $x$ in $\Omega$, $N_x\subset \Omega$, and for any test function $\phi\in L^\infty_{2s-\eps}(\R^n)$ such that $\phi\in C^2(N_x)$, $\phi(x) = u(x)$, and $\phi \ge u$ in all of $\R^n$,   we have $\L_x \phi \le f(x)$. 

\vspace{1mm}

\item We say that $u\in {\rm LSC}(\Omega)\cap L^\infty_{2s-\eps}(\R^n)$ for some $\eps > 0$ is a \emph{viscosity supersolution} to \eqref{linear} in $\Omega$, and we denote   
\[\L (u, x) \ge f(x)\quad  \textrm{in}\quad \Omega,\]
if for any $x\in \Omega$ and any neighborhood of $x$ in $\Omega$, $N_x\subset \Omega$, and for   any   test function $\phi\in L^\infty_{2s-\eps}(\R^n)$ such that $\phi\in C^2(N_x)$, $\phi(x) = u(x)$, and $\phi \le u$ in all of $\R^n$,   we have $\L_x \phi \ge f(x)$. 

\vspace{1mm}

\item We say that $u\in C({\Omega})\cap L^\infty_{2s-\eps}(\R^n)$ for some $\eps > 0$ is a \emph{viscosity solution} to \eqref{linear} in $\Omega$, and we denote $\L(u, x) = f(x)$ in $\Omega$, if it is both a viscosity subsolution and supersolution. 
\end{itemize}
\end{defi}

\section{Schauder estimates in non-divergence form}
\label{sec3}

From now on, we denote as follows the operators in non-divergence form with $x$-dependence:
\begin{equation}\label{x-dependence-L-nu}
\begin{split}
\L(u,x)& := {\rm P.V.}\int_{\R^n}\big(u(x)-u(x+y)\big)K_x(dy) \\
& = \frac12 \int_{\R^n}\big(2u(x)-u(x+y)-u(x-y)\big)K_x(dy),
\end{split}
\end{equation}
(cf. \eqref{eq:Lu_nu}) where $(K_x)_{x\in \R^n}$ is a family of L\'evy measures satisfying \eqref{eq:nu_cond} and with ellipticity conditions \eqref{eq:Kellipt_gen_L}-\eqref{eq:Kellipt_gen_l} uniform in $x\in \R^n$. Namely, if for a given $x_\circ \in \R^n$ we denote $\L_{x_\circ}$ the translation invariant operator with L\'evy measure $K_{x_\circ}$ (i.e., the operator with ``frozen coefficients''), we consider  operators $\L$ of the form \eqref{x-dependence-L-nu} such that $\L_x\in \GL$ for all $x\in \R^n$.

The assumption of ``H\"older continuous coefficients'' in this context reads as follows:
\begin{equation}\label{x-dependence-L2}
\int_{B_{2\rho}\setminus B_\rho} \big|K_x(dy)-K_{x'}(dy)\big|  \leq M|x-x'|^\alpha \rho^{-2s},
\end{equation}
for all $x, x'\in \R^n$ and all $\rho > 0$, and for some $M > 0$. 
 
In some cases, we also need some regularity of each $K_x$ in the $y$-variable, namely 
\begin{equation}\label{x-dependence-L3}
\sup_{x\in\R^n} \, [K_x]_\alpha \leq M.
\end{equation}
(Recall \eqref{Calpha-assumption}.)

Under the previous assumptions, we then have the following a priori estimate (see Remark~\ref{rem:visc_x_dep}). It corresponds to Theorem~\ref{thmB} in the case $\gamma = 0$.

\begin{prop}\label{thm-interior-linear-x}
Let $s\in (0,1)$, and let $\L$ be an operator of the form \eqref{x-dependence-L-nu} with $\L_x\in \GL$ for all $x\in \R^n$. Suppose, moreover, that $\L$ satisfies the regularity assumptions \eqref{x-dependence-L2}-\eqref{x-dependence-L3} for some $\alpha\in(0,1]$  such that $2s+\alpha\notin\N$, and $M > 0$.  

Let $f\in C^\alpha(B_1)$, and let $u\in C_{\rm loc}^{2s+\alpha}(B_1) \cap L^\infty_{2s-\eps}(\R^n)$ for some $\eps > 0$ be any solution of \eqref{linear}.
Then, 
\[
\|u\|_{C^{2s+\alpha}(B_{1/2})} \le C\left(\|u\|_{L^\infty_{2s-\eps}(\R^n)} + \|f\|_{C^\alpha(B_1)}\right)
\]
for some $C$ depending only on $n$, $s$, $\alpha$, $\eps$, $\lambda$, $\Lambda$, and $M$. 
\end{prop}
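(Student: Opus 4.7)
The plan is to argue by contradiction, combining the blow-up framework of Lemma~\ref{lem-interior-blowup} with the Liouville theorem of Theorem~\ref{thm:Liouville} (and, in the background, with the interior estimate of Theorem~\ref{thm-interior-linear-2} for translation-invariant operators). Setting $\mu := 2s + \alpha$ (noninteger by assumption) and $\nu := \lfloor \mu \rfloor$, I would apply Lemma~\ref{lem-interior-blowup} with
\[
\mathcal{S}(u) := \|u\|_{L^\infty_{2s-\eps}(\R^n)} + \|\L(u, \cdot)\|_{C^\alpha(B_1)}.
\]
Under alternative \ref{it:lem_int_blowup_i}, a standard interpolation between $[u]_{C^\mu(B_{1/2})}$ and $\|u\|_{L^\infty(B_1)}$ closes the estimate after a covering argument that reduces $[u]_{C^\mu(\R^n)}$ to interior seminorms, so I would focus on ruling out alternative \ref{it:lem_int_blowup_ii}.

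In the blow-up regime one obtains sequences $u_k$, $x_k \in \overline{B_{1/2}}$ (converging to some $x_\infty$ up to subsequence), $r_k \to 0$, and, after subtracting the $\nu$-th order Taylor polynomial $p_k$ of $v_k$ at the origin, functions $w_k := v_k - p_k$ enjoying: $(a)$ the growth control $|w_k(y)| \le C(1+|y|^\mu)$ together with $[w_k]_{C^\mu(B_R)} \le C_R$ for every $R>0$; $(b)$ vanishing of $w_k$ and of its derivatives up to order $\nu$ at the origin; $(c)$ $\|w_k\|_{C^\nu(B_1)} > \delta/2$. A direct scaling computation shows that $w_k$ satisfies
\[
\tilde \L^{(k)} w_k = \tilde f_k - \tilde \L^{(k)} p_k \qquad \text{in } B_{1/(2r_k)},
\]
where $\tilde \L^{(k)}$ has $y$-dependent kernels $\tilde K_y^{(k)}(dz) := r_k^{2s} K_{x_k + r_k y}(r_k \, dz)$, and where $[\tilde f_k]_{C^\alpha(B_R)} \to 0$ for every fixed $R$ by \eqref{lem-interior-blowup2}.

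The kernels $\tilde K_y^{(k)}$ satisfy \eqref{eq:Kellipt_gen_L}--\eqref{eq:Kellipt_gen_l} uniformly in $k$ and $y$, and the regularity assumption \eqref{x-dependence-L2}, after rescaling, reads
\[
\int_{B_{2\rho}\setminus B_\rho}\bigl|\tilde K_y^{(k)}(dz) - \tilde K_0^{(k)}(dz)\bigr| \le M(r_k|y|)^\alpha \rho^{-2s} \longrightarrow 0
\]
for each fixed $y$ and $\rho$. A compactness lemma within the class $\GL$ (in the spirit of the one underlying Theorem~\ref{thm-interior-linear-2}) extracts a subsequence along which the frozen operators $\tilde \L_0^{(k)}$ converge to some $\L_\infty \in \GL$; by the previous display, the variable-coefficient operators $\tilde \L^{(k)}$ converge to the same translation-invariant $\L_\infty$. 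Combining Arzel\`a--Ascoli applied to $w_k$ on compact subsets of $\R^n$ with the growth bound $(a)$---which controls the nonlocal tails uniformly---I would pass to the limit and obtain $w_k \to w_\infty$ in $C^\nu_{\rm loc}(\R^n)$ with $\L_\infty w_\infty = 0$ in $\R^n$ and $|w_\infty(y)| \le C(1+|y|^\mu)$.

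The last step is a higher-order Liouville statement: since $\mu > 2s$ is allowed, Theorem~\ref{thm:Liouville} does not apply directly, and I would iterate it via incremental quotients $\delta_h^{2s'} w_\infty/|h|^{2s'}$ with $2s' < 2s$, each of which solves $\L_\infty \cdot = 0$ in $\R^n$ with strictly reduced polynomial growth; after finitely many steps the growth drops below $2s$, and Theorem~\ref{thm:Liouville} forces $w_\infty$ itself to be a polynomial of degree at most $\nu$. Property $(b)$ then gives $w_\infty \equiv 0$, contradicting $\|w_\infty\|_{C^\nu(B_1)} \ge \delta/2$ inherited from $(c)$. I expect the main obstacle to lie precisely in this final paragraph: implementing the higher-order Liouville cleanly, and making rigorous the compactness and convergence of the rescaled operators $\tilde \L^{(k)}$ to an element of $\GL$ given that the kernels are only Radon measures; the regularity assumption \eqref{x-dependence-L3} is what enables one to control $\tilde \L^{(k)} p_k$ uniformly and to legitimately evaluate $\L_\infty$ on the Taylor polynomial comparisons appearing in the limit.
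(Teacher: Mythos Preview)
Your approach is genuinely different from the paper's. The paper does not argue by blow-up here: it uses the classical \emph{frozen-coefficients} perturbation. After an initial rescaling that shrinks the constant in \eqref{x-dependence-L2} to an arbitrarily small $\delta$ (while \eqref{x-dependence-L3} is preserved), one applies the translation-invariant Schauder estimate of Theorem~\ref{thm-interior-linear-2} to $\L_0:=\L_{x=0}$, getting $\|u\|_{C^{2s+\alpha}(B_{1/4})} \le C(\|u\|_{L^\infty_{2s-\eps}} + \|\L_0 u\|_{C^\alpha(B_{1/2})})$. The only real work is then the bound $\|\L(u,\cdot) - \L_0 u\|_{C^\alpha(B_{1/2})} \le C(\delta\|u\|_{C^{2s+\alpha}(B_1)} + \|u\|_{L^\infty_{2s-\eps}})$, obtained by applying Lemmas~\ref{lem:Lu} and \ref{lem:Lu_2} (via Remark~\ref{rem:nonneg_kernels}) to the difference kernels $K_x-K_0$ and $K_{x_1}-K_{x_2}$, whose mass on dyadic annuli is $\lesssim\delta$ by \eqref{x-dependence-L2} and whose $[\,\cdot\,]_\alpha$ seminorm is $\le 2M$ by \eqref{x-dependence-L3}. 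A standard absorption/covering closes the loop. In short, the paper \emph{uses} Theorem~\ref{thm-interior-linear-2} as a black box, so no compactness of kernels and no Liouville argument ever appears; the blow-up machinery of Lemma~\ref{lem-interior-blowup} is reserved for the results in Sections~\ref{sec4}--\ref{sec6}, where the target exponent is below $2s$ and Theorem~\ref{thm:Liouville} applies directly.

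Your route can be made to work, but the obstacle you single out is real and one of your remedies is off. When $\nu=2$ the quantity $\tilde\L^{(k)} p_k$ is \emph{not} well defined for a general kernel in $\GL$: the integral $\int_{|z|>1}|z|^2 K(dz)$ can diverge, and assumption \eqref{x-dependence-L3} is a regularity (not decay) condition, so it does not help here. The correct implementation is to take $(\nu+1)$-th order increments of $v_k$ (not of $w_k$, and not try to split off $p_k$): these are uniformly bounded by $C|h|^{\mu}$ because $[v_k]_{C^\mu(\R^n)}=1$, they satisfy an equation whose commutator terms with the $x$-dependence vanish in the limit thanks to the rescaled \eqref{x-dependence-L2}, and then Theorem~\ref{thm:Liouville} plus Lemma~\ref{it:H10} force the limit to be a polynomial of degree at most $\nu$. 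This is essentially how Theorem~\ref{thm-interior-linear-2} itself is proved in \cite{FR23}; the paper's perturbative argument avoids repeating that work.
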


Notice that we need the kernel to have the same degree of regularity (in the $y$ variable) as the right-hand side in order to gain $2s$ derivatives. 
Without this assumption, the previous estimate is false.

Still, for solutions satisfying a global regularity assumption of the type $u\in C^\gamma(\R^n)$, one expects to be able to remove (or at least, weaken) such an assumption. 
This is what we do in the following result, which holds under the weaker regularity assumption (in $x$) 
\begin{equation}\label{x-dependence-L4}
\sup_{\begin{subarray}{c} [\phi]_{C^\gamma(\R^n)}\leq 1 \\ \phi(0)=0 \end{subarray}} 
\left|\int_{B_{2\rho}\setminus B_\rho} \phi\, d(K_x - K_{x'})\right| \leq M|x-x'|^\alpha\rho^{\gamma-2s},
\end{equation}
for all $x,x'\in \R^n$ and all $\rho>0$. 
The expression on the right-hand side of \eqref{x-dependence-L4} can be understood as some kind of weighted Kantorovich norm measuring the distance between the measures $K_x$ and $K_{x'}$ (see \cite{Han92, Han99}). 
Notice that, in the following, no regularity in the $y$ variable is assumed when $\alpha\leq \gamma$.

\begin{prop}\label{prop-interior-linear-x}
Let $s\in (0,1)$,  $\gamma\in (0, 2s)$, $\alpha\in (0,1]$ such that $2s+\alpha\notin \N$, and let $\L$ be an operator of the form \eqref{x-dependence-L-nu}  such that $\L_x\in \GL$ for all $x\in \R^n$. 
Suppose, moreover, that 
 $\L$ satisfies \eqref{x-dependence-L4} for some $M > 0$. 
 In case $\alpha>\gamma$, assume in addition that $\sup_{x\in B_1} [K_x]_{\alpha-\gamma} \leq M$.

Let $f\in C^\alpha(B_1)$, and let $u\in C^{2s+\alpha}(B_1)\cap C^{\gamma}(\R^n)$ be any solution of \eqref{linear}.
Then
\[
\|u\|_{C^{2s+\alpha}(B_{1/2})} \le C\left(\|f\|_{C^\alpha(B_1)} + \|u\|_{C^\gamma(\R^n)}\right)
\]
for some $C$ depending only on $n$, $s$, $\alpha$, $\gamma$, $\lambda$, $\Lambda$, and $M$.
\end{prop}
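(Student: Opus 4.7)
The approach is a compactness and Liouville argument via the blow-up dichotomy of Lemma~\ref{lem-interior-blowup}, modified to handle the $x$-dependence using the Kantorovich-type assumption \eqref{x-dependence-L4}. I argue by contradiction and apply the lemma with $\mu := 2s+\alpha$ (noting $\mu\notin\N$) and functional
\[
\mathcal{S}(u) := \|\L(u,\cdot)\|_{C^\alpha(B_1)} + \|u\|_{C^\gamma(\R^n)},
\]
for a suitably small $\delta > 0$. If case~(i) of the lemma holds, a standard covering/absorption argument (together with Lemma~\ref{lem:Lu_2}\ref{it:lem_Lu2_iii}, applied on balls of the covering) yields the estimate. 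So assume case~(ii): we obtain $u_k$, $x_k\in B_{1/2}$, and $r_k\to 0$, such that the blow-ups $v_k(x) := u_k(x_k+r_kx)/(r_k^\mu\sigma_k)$ (with $\sigma_k := [u_k]_{C^\mu(\R^n)}$) and $\tilde v_k := v_k - p_k$ (with $p_k$ the degree-$\nu$ Taylor polynomial of $v_k$ at $0$, $\nu := \lfloor \mu\rfloor$) satisfy $\|\tilde v_k\|_{C^\nu(B_1)} > \delta/2$, $[\tilde v_k]_{C^\mu(\R^n)} = 1$, $\partial^\beta \tilde v_k(0) = 0$ for $|\beta|\le\nu$, and $\tau_k/\sigma_k\to 0$ where $\tau_k := \|u_k\|_{C^\gamma(\R^n)}$ (the last statement follows from $\tau_k \le \mathcal{S}(u_k)$ and $[u_k]_{C^\mu(B_{1/2})}\le \sigma_k$).

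The Taylor-remainder estimate gives $|\tilde v_k(x)| \le C|x|^\mu$ on all of $\R^n$, so Arzel\`a--Ascoli produces a subsequence $\tilde v_k\to v$ in $C^\nu_{\rm loc}(\R^n)$ with $\|v\|_{C^\nu(\overline{B_1})} \ge \delta/2$, $\partial^\beta v(0) = 0$ for $|\beta|\le\nu$, and $[v]_{C^\mu(\R^n)}\le 1$. The rescaled equation reads $\tilde\L_k \tilde v_k = \tilde f_k$ with rescaled kernels
\[
\tilde K_{k,x}(dy) := r_k^{2s}\, K_{x_k + r_k x}(r_k\,dy),
\]
which satisfy \eqref{eq:Kellipt_gen_L}--\eqref{eq:Kellipt_gen_l} uniformly, and with $\tilde f_k\to 0$ in $C^\alpha_{\rm loc}$. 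The assumption \eqref{x-dependence-L4} rescales (on any fixed annulus $B_{2\rho}\setminus B_\rho$) to
\[
\sup_{\substack{[\phi]_{C^\gamma}\le 1\\ \phi(0)=0}}\Bigl|\int \phi\, d\bigl(\tilde K_{k,x}-\tilde K_{k,0}\bigr)\Bigr| \le M r_k^\alpha|x|^\alpha\rho^{\gamma-2s},
\]
so the $x$-dependence of $\tilde K_{k,\cdot}$ degenerates as $k\to\infty$. Extracting a further subsequence, $\tilde K_{k,0}$ converges weakly on $\R^n\setminus\{0\}$ to some $K_0 \in \GL$, defining a translation-invariant $\L_0 \in \GL$, and the limit function satisfies $\L_0 v = 0$ in $\R^n$ in the distributional (equivalently viscosity) sense. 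An extended Liouville theorem in the spirit of Theorem~\ref{thm:Liouville}---valid for $v$ with $[v]_{C^\mu(\R^n)}<\infty$ and $\L_0 v=0$, and concluding that $v$ is a polynomial of degree $\le \nu$ (shown by applying Theorem~\ref{thm:Liouville} to $\nu+1$-th order finite differences)---combined with $\partial^\beta v(0) = 0$ for $|\beta|\le \nu$ forces $v \equiv 0$, contradicting $\|v\|_{C^\nu(\overline{B_1})}\ge \delta/2$.

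\textbf{Main obstacle.} The delicate point is showing $\L_0 v = 0$ distributionally. The natural test function for \eqref{x-dependence-L4} is $\phi(y) = \tilde v_k(x+y) - \tilde v_k(x)$, whose $C^\gamma(\R^n)$ seminorm (inherited from $u_k$) scales as $(\tau_k/\sigma_k)\, r_k^{\gamma-\mu}$, and this is \emph{not} uniformly bounded in $k$: both factors approach $0$ and $\infty$ respectively, without an a priori rate comparison. The resolution is to split the error $\tilde\L_k \tilde v_k(x) - \tilde\L_{k,0} \tilde v_k(x)$ at a crossover scale $R$: for $|y|\le R$, the symmetry of the two kernels annihilates the linear part of $\tilde v_k(x+\cdot)-\tilde v_k(x)$, leaving a residual controlled by $[\tilde v_k]_{C^\mu(\R^n)}=1$ together with the uniform dyadic measure bound; for $|y|\ge R$, \eqref{x-dependence-L4} applies directly, contributing a decisive factor $r_k^\alpha\to 0$. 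Optimizing $R$ (depending on $k$) yields a uniform vanishing of the error on compact sets of $x$. When $\alpha>\gamma$, the supplementary hypothesis $[K_x]_{\alpha-\gamma}\le M$ is used via Lemma~\ref{lem:Lu_2}\ref{it:lem_Lu2_iii} to ensure the frozen-coefficient regularity necessary for the small-scale residual, and to guarantee that $\tilde\L_{k,0}\tilde v_k$ makes sense with the right integrability against the tail bound from Step~2.
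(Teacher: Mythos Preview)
Your approach is genuinely different from the paper's. The paper does \emph{not} run a blow-up/Liouville argument here: it proves Proposition~\ref{prop-interior-linear-x} by a direct perturbation around the frozen operator $\L_0$, exactly as in the proof of Proposition~\ref{thm-interior-linear-x}, but replacing the pointwise kernel bound by Lemma~\ref{lem-sdhhh}. After rescaling so that \eqref{x-dependence-L4} holds with constant $\delta$ small, one applies Theorem~\ref{thm-interior-linear-2} to $\L_0 u$ and uses Lemma~\ref{lem-sdhhh} to get
\[
\|\L(u,\cdot)-\L_0 u\|_{C^\alpha(B_{1/2})}\le C\big(\delta\|u\|_{C^{2s+\alpha}(B_1)}+[u]_{C^\gamma(\R^n)}\big),
\]
and then absorbs. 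No compactness, no limiting equation.

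Your route could in principle be made to work, but as written it has two concrete gaps.

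\textbf{(1) The claim $\tilde f_k\to 0$ in $C^\alpha_{\rm loc}$ is unjustified.} Writing $\tilde f_k(x)=r_k^{-\alpha}\sigma_k^{-1}f_k(x_k+r_kx)$, the seminorm $[\tilde f_k]_{C^\alpha}=\sigma_k^{-1}[f_k]_{C^\alpha}\to 0$ is fine, but $\tilde f_k(0)=r_k^{-\alpha}\sigma_k^{-1}f_k(x_k)$ involves the indeterminate product $r_k^{-\alpha}\cdot(\|f_k\|_{L^\infty}/\sigma_k)$. Subtracting $p_k$ does not help when $\nu\le 1$ (then $\tilde\L_k p_k=0$). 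The standard fix is to pass to finite differences of the equation: show instead that $\tilde\L_{k,0}\big(D_h^{\nu+1}\tilde v_k\big)\to 0$ for each $h$, since $|D_h^{\nu+1}\tilde f_k|\le C|h|^\alpha[f_k]_{C^\alpha}/\sigma_k\to 0$, and then apply Theorem~\ref{thm:Liouville} to $D_h^{\nu+1}v$. You allude to differences only at the very end (for Liouville), but they are needed already to pass to the limit in the equation.

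\textbf{(2) Your near/far splitting does not close.} For $|y|\le R$ you say you use ``the uniform dyadic measure bound'', i.e.\ only $\int_{B_{2\rho}\setminus B_\rho}(\tilde K_{k,x}+\tilde K_{k,0})\le 2\Lambda\rho^{-2s}$ together with $|\phi(y)|\le C|y|^\mu$; this gives a near contribution $\sim R^\alpha$, which carries \emph{no} $r_k^\alpha$ factor. Optimizing against the far contribution $\sim r_k^{\gamma-2s}(\tau_k/\sigma_k)R^{\gamma-2s}$ then reduces the whole error to a multiple of $\big(r_k^{\gamma-2s}\tau_k/\sigma_k\big)^{\alpha/(\mu-\gamma)}$, and $r_k^{\gamma-2s}\tau_k/\sigma_k$ is exactly the indeterminate quantity you yourself flagged. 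The repair is to use the rescaled \eqref{x-dependence-L4} on the near part as well: by interpolation (as in the proof of Lemma~\ref{lem-sdhhh}) one has $[\phi]_{C^\gamma(B_{2\rho}\setminus B_\rho)}\le C\rho^{\mu-\gamma}$, which yields a near contribution $\sim Mr_k^\alpha|x|^\alpha R^\alpha$. Combining with the far part and optimizing at $R=r_k^{-1}(\tau_k/\sigma_k)^{1/(\mu-\gamma)}$ gives a total error $\sim CM|x|^\alpha(\tau_k/\sigma_k)^{\alpha/(\mu-\gamma)}\to 0$, which is what you need.

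In short, the blow-up strategy is viable but requires both of the above corrections; the paper's freezing-of-coefficients argument via Lemma~\ref{lem-sdhhh} sidesteps these issues entirely and is considerably shorter.
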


Notice that the assumption \eqref{x-dependence-L4} is the minimal scale-invariant assumption that  ensures the property
\begin{equation}\label{nyi}
\big\|\L_{x_1} w-\L_{x_2}w\big\|_{L^\infty(B_{3/4})} \leq M|x_1-x_2|^\alpha \big(\|w\|_{C^{2s+\alpha}(B_1)}+\|w\|_{C^{\gamma}(\R^n)}\big),
\end{equation}
for all $w\in C^{2s+\alpha}(B_1)\cap C^{\gamma}(\R^n)$ (see Lemma~\ref{lem-sdhhh}), and thus we expect it to be the minimal assumption under which these Schauder-type estimates hold.

As explained in the Introduction, it is interesting to notice that, for $\alpha\leq \gamma$, \eqref{x-dependence-L4} allows for completely singular L\'evy measures.
For example, one could have operators of the type
\[\sum_{i=1}^n \big(-\partial_{\textbf{v}_i(x)}^2\big)^s,\]
where the directions $\textbf{v}_i$ are smooth functions of $x$ satisfying ${\rm det}(\textbf{v}_i)_i\neq0$.
Notice also that choosing $\alpha<\gamma$ allows us to consider H\"older continuous functions~$\textbf{v}_i$ (with exponent $\alpha/\gamma$).  Such operators are not covered, for example, by the stronger assumption \eqref{x-dependence-L2}.

\begin{rem} 
\label{rem:visc_x_dep}
The estimates we prove in this paper are \textit{a priori} estimates, in the sense that we assume the solutions to be $C^{2s+\alpha}$.
Still, using the theory of viscosity solutions, we expect one could actually show by a regularization procedure that the previous estimates are also valid for general viscosity solutions (cf. \cite{Kri13, Ser2, Fer23}). 
\end{rem}

\begin{proof}[Proof of Proposition \ref{thm-interior-linear-x}]
Let us divide the proof into three  steps.
\begin{steps}
\item\label{step:1nondiv} We start with an initial reduction. By a standard covering argument, we may prove the estimate in $B_{r_\circ/2}$ instead of $B_{1/2}$, where $r_\circ$ is a small fixed constant to be chosen later (depending only on $n$, $s$, $\alpha$, $\eps$, $\lambda$, $\Lambda$, and $M$).

In order to do that, we define $u_\circ(x) := u(r_\circ x)$, so that if $\L$ has kernel   $K_x(dy)$, and we consider $\L^{r_\circ}$ to be the operator with kernel $K^{r_\circ}_x(dy) = r_\circ^{ 2s} K_{r_\circ x}(r_\circ \, dy)$ (which has the same ellipticity constants as $K_x$), then $u_\circ$ satisfies
\[
\L^{r_\circ}(u_\circ, x) = f_\circ(x) := r_\circ^{2s} f(r_\circ x) \quad\text{in}\quad B_1.
\]

If we can now prove the desired estimate for $u_\circ$,
\[
\|u_\circ\|_{C^{2s+\alpha}(B_{1/2})} \le C\left(\|u_\circ\|_{L^\infty_{2s-\eps}(\R^n)} + \|f_\circ\|_{C^\alpha(B_1)}\right),
\]
we will be done, since 
\[
\begin{split}
r_\circ^{2s+\alpha} \|u\|_{C^{2s+\alpha}(B_{r_\circ/2})}& \le \|u_\circ\|_{C^{2s+\alpha}(B_{1/2})}\\
& \le C\left(\|u_\circ\|_{L^\infty_{2s-\eps}(\R^n)} + \|f_\circ\|_{C^\alpha(B_1)}\right)\\
&   \le C\left(\|u\|_{L^\infty_{2s-\eps}(\R^n)} + \|f\|_{C^\alpha(B_1)}\right).
\end{split}
\]
In particular, since $r_\circ$ will be fixed universally, this will yield the estimate in $B_{r_\circ/2}$, and after a covering (and rescaling) argument, in the whole $B_{1/2}$. 

The advantage of this rescaling is that, now, the new operator $\L^{r_\circ}$ satisfies a new condition \eqref{x-dependence-L2} with constants depending on $r_\circ$ (since we have ``expanded'' the space by a factor $r_\circ^{-1}$):
\begin{equation}\label{x-dependence-L2-2}
\sup_{\rho > 0}\sup_{x, x'\in \R^n} \rho^{2s}\int_{B_{2\rho}\setminus B_\rho} \frac{\big|K^{r_\circ}_x(dy)-K^{r_\circ}_{x'}(dy)\big|}{|x-x'|^\alpha}   \leq Mr^\alpha_\circ =:\delta,
\end{equation}
whereas condition \eqref{x-dependence-L3} remains the same (that is, with the same constant);
\begin{equation}\label{x-dependence-L3-2}
\sup_{x\in\R^n} \, [K^{r_\circ}_x]_\alpha \leq M.
\end{equation} 

 In all, up to replacing $u$, $\L$, and $f$,  by $u_\circ$, $\L^{r_\circ}$, and $f_\circ$, we can assume without loss of generality that condition \eqref{x-dependence-L2} holds with $M = \delta> 0$ arbitrarily small, but fixed, that will be chosen universally:
\begin{equation}\label{x-dependence-L2_delta}
\sup_{\rho > 0}\sup_{x, x'\in \R^n} \rho^{2s}\int_{B_{2\rho}\setminus B_\rho} \frac{\big|K_x(dy)-K_{x'}(dy)\big|}{|x-x'|^\alpha}   \leq \delta,
\end{equation}

\item \label{step:step2prove} Let $\L_0 \in \G_s(\lambda, \Lambda;\alpha)$ be the operator $\L_x$ corresponding to $x=0$.
Then, by the regularity estimates for translation invariant equations, Theorem~\ref{thm-interior-linear-2}, we have
\[
\|u\|_{C^{2s+\alpha}(B_{1/4})} \le C\left(\|u\|_{L^\infty_{2s-\eps}(\R^n)} + \|\L_0u\|_{C^\alpha(B_{1/2})}\right).
\]
Moreover, we also have
\[\|\L_0u\|_{C^\alpha(B_{1/2})} \leq \|f\|_{C^\alpha(B_{1/2})} + \|\L(u,\cdot) - \L_0u\|_{C^\alpha(B_{1/2})}.\]
We now claim that 
\begin{equation}\label{ngfhhghg} 
\|\L(u,\cdot) - \L_0u\|_{C^\alpha(B_{1/2})} \leq C\left(\delta\|u\|_{C^{2s+\alpha}(B_1)} + \|u\|_{L^\infty_{2s-\eps}(\R^n)}\right).
\end{equation}

For this, we define, for any $x\in B_{1/2}$ fixed, a new operator 
\[
\MR_x := \L_x - \L_0,
\]
so that $\L(u, x) - \L_0 u = \MR_x u$. Observe that $\MR_x$ has kernel $R_x(dy) := K_x(dy) - K_0(dy)$, but it does not belong to $\GL$ (since it is not positive). 

We have, nonetheless, that by assumption \eqref{x-dependence-L2_delta} applied with $x' = 0$,
\[
\rho^{2s}\int_{B_{2\rho\setminus B_\rho}} |R_x(dy)| \le \delta |x|^\alpha \le \delta\qquad\text{for all}\quad \rho > 0. 
\]
In particular, we can apply Lemma~\ref{lem:Lu} (see Remark~\ref{rem:nonneg_kernels}) and deduce 
\[
|\MR_x u(x)| \le C \delta \left( \|u\|_{C^{2s+\alpha}(B_1)} + \|u\|_{L^\infty_{2s-\eps}(\R^n)}\right),
\]
for any $x\in B_{1/2}$, which gives the $L^\infty$ bound on \eqref{ngfhhghg}. 

On the other hand, given any $x_1, x_2\in B_{1/2}$, we now want to bound the difference
\[
|\MR_{x_1} u(x_1) - \MR_{x_2} u(x_2)|\le |\MR_{x_1} u(x_1) - \MR_{x_1} u(x_2)|+|\MR_{x_1} u(x_2) - \MR_{x_2} u(x_2)|.
\]
For the first term, we  use Lemma~\ref{lem:Lu_2}-\ref{it:lem_Lu2_i} (together with Remark~\ref{rem:nonneg_kernels}) with operator $\MR_{x_1}$ fixed, to deduce
\[
|\MR_{x_1} u(x_1) - \MR_{x_1} u(x_2)| \le C|x_1-x_2|^\alpha\left(\delta \|u\|_{C^{2s+\alpha}(B_1)}+ 2M \|u\|_{L^\infty_{2s-\eps}(\R^n)}\right).
\]
We have also used here that, by assumption \eqref{x-dependence-L3}, $[R_{x_1}]_\alpha\le 2M$. 

For the second term, we can define yet another operator
\[
\tilde\MR_{x_1,x_2} := \MR_{x_1} - \MR_{x_2},
\]
which has kernel $\tilde R_{x_1,x_2} (y) := K_{x_1}(y) - K_{x_2}(y)$ and satisfies,  by \eqref{x-dependence-L2_delta},
\[
\rho^{2s}\int_{B_{2\rho}\setminus B_\rho}|\tilde R_{x_1,x_2}(dy)|  \le \delta |x_1-x_2|^\alpha. 
\]
Thus, we can  apply again Lemma~\ref{lem:Lu} and Remark~\ref{rem:nonneg_kernels} to deduce
\[
\begin{split}
|\MR_{x_1} u(x_2) - \MR_{x_2} u(x_2)| & = |\tilde \MR_{x_1,x_2} u(x_2)|\\
& \le C\delta |x_1-x_2|^\alpha (\|u\|_{C^{2s+\alpha}(B_1)}+\|u\|_{L^\infty_{2s-\eps}(\R^n)}). 
\end{split}
\]

Putting everything together, we have obtained \eqref{ngfhhghg}, where $\delta$ can still be chosen. 

\item \label{step3} In all, we have shown 
\[
\|u\|_{C^{2s+\alpha}(B_{1/4})} \le C\left(\delta\|u\|_{C^{2s+\alpha}(B_1)} + \|u\|_{L^\infty_{2s-\eps}(\R^n)} + \|f\|_{C^\alpha(B_{1/2})}\right)
\]
for all $u\in C^{2s+\alpha}(B_1)\cap L^\infty_{2s-\eps}(\R^n)$, where $\L(u, \cdot) = f$. Obtaining the desired estimate from the previous one is now a standard covering-type argument, after choosing $\delta$ universally small enough (depending only on $n$, $s$, $\alpha$, $\eps$, $\lambda$, $\Lambda$, and $M$); see, e.g., \cite[Lemma 2.27]{FR22} or \cite[Section 2.4]{FR23}. We obtain, 
\[
\|u\|_{C^{2s+\alpha}(B_{1/2})} \le C\left(\|u\|_{L^\infty_{2s-\eps}(\R^n)} + \|f\|_{C^\alpha(B_{1})}\right),
\]
for all $u\in C^{2s+\alpha}(B_1)\cap L^\infty_{2s-\eps}(\R^n)$, where $\L(u, \cdot) = f$.

From \ref{step:1nondiv}, this proves the estimate in a universally small ball $B_{r_\circ}$, and after a further covering and rescaling, this shows the desired result.
 \qedhere
\end{steps}
\end{proof}

To prove Proposition  \ref{prop-interior-linear-x} we need the following.

\begin{lem}\label{lem-sdhhh}
Let $s\in(0,1)$,   $\alpha\in(0,1]$, and $\gamma\in(0,2s)$.
Let $\L_1,\L_2\in \GL$, with L\'evy measures $K_1,K_2$, be such that
\begin{equation}\label{iserbgieub}
\sup_{\begin{subarray}{c} [\phi]_{C^\gamma(\R^n)}\leq 1 \\ \phi(0)=0 \end{subarray}} 
\left|\int_{B_{2\rho}\setminus B_\rho} \phi \, d(K_1 - K_2)\right| \leq \theta \rho^{\gamma-2s}\qquad\text{for all}\quad \rho > 0,
\end{equation}
for some $\theta > 0$. 
 In case $\alpha>\gamma$, assume in addition that $[\L_i]_{\alpha-\gamma} \leq M$ for $i= 1,2$, and some $M > 0$. 

Then, if $w\in C^{2s+\alpha}(B_1)\cap C^{\gamma}(\R^n)$ we have
\begin{equation}
\label{eq:Linfty_nondivform}
\big\|(\L_1-\L_2)w\big\|_{L^\infty(B_{1/2})} \leq C 
\theta \big(\|w\|_{C^{2s+\alpha}(B_1)}+[w]_{C^{\gamma}(\R^n)}\big)
\end{equation}
as well as  
\begin{equation}
\label{eq:Calpha_nondivform}
\big[(\L_1-\L_2)w\big]_{C^\alpha(B_{1/2})}  \le 
\left\{
\begin{array}{ll}
  C 
\theta \big( \|w\|_{C^{2s+\alpha}(B_1)}+[w]_{C^{\gamma}(\R^n)}\big)& \quad\text{if}\quad \alpha \le \gamma\\[0.2cm]
 C 
\big(\theta \|w\|_{C^{2s+\alpha}(B_1)}+M [w]_{C^{\gamma}(\R^n)}\big)& \quad\text{if}\quad \alpha > \gamma.
\end{array}
\right.
\end{equation}
The constant $C$ depends only on $n$, $s$, $\alpha$, and $\gamma$.
\end{lem}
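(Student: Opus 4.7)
I will split the proof into two estimates: the $L^\infty$ bound \eqref{eq:Linfty_nondivform} and the H\"older seminorm bound \eqref{eq:Calpha_nondivform}. The core technique throughout is a dyadic decomposition in $|y|$ of the integral
\[
(\L_1-\L_2)w(x) = \tfrac12\int_{\R^n}\phi_x(y)\,d(K_1-K_2)(y), \qquad \phi_x(y):=2w(x)-w(x+y)-w(x-y),
\]
noting that $\phi_x(0)=0$. At each dyadic scale $\rho=2^{-k}$ I will apply the Kantorovich-type hypothesis \eqref{iserbgieub} to a test function that coincides with $\phi_x$ on the annulus $B_{2\rho}\setminus B_\rho$ and vanishes at the origin.

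\textbf{$L^\infty$ bound.} For large scales $\rho\ge 1$, the test function will be $\phi_x$ itself; the global bound $[\phi_x]_{C^\gamma(\R^n)}\le 2[w]_{C^\gamma(\R^n)}$ together with the geometric convergence of $\sum_{\rho\ge 1}\rho^{\gamma-2s}$ (since $\gamma<2s$) will contribute $C\theta[w]_{C^\gamma(\R^n)}$. For small scales $\rho<1$, I will use the cutoff test function $\phi_x^\rho(y):=\phi_x(y)\,\eta(y/(10\rho))$ with $\eta\in C_c^\infty(B_1)$ and $\eta\equiv 1$ on $B_{1/2}$. The evenness of $\phi_x$ and the local $C^{2s+\alpha}$ regularity of $w$ (subtracting the quadratic $D^2 w$ contribution when $2s+\alpha>2$) combined with the product rule will give $[\phi_x^\rho]_{C^\gamma(\R^n)}\le C\|w\|_{C^{2s+\alpha}(B_1)}\rho^{2s+\alpha-\gamma}$, so \eqref{iserbgieub} will yield $|I_\rho|\le C\theta\|w\|_{C^{2s+\alpha}(B_1)}\rho^\alpha$, which sums geometrically to $C\theta\|w\|_{C^{2s+\alpha}(B_1)}$.

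\textbf{$C^\alpha$ seminorm bound.} For $x,x'\in B_{1/2}$ set $h:=x-x'$ and $g(z):=w(z+h)-w(z)$. I may assume WLOG $|h|\le 1/8$, since otherwise the bound follows from \eqref{eq:Linfty_nondivform}. By translation invariance of $\L_1,\L_2$,
\[
(\L_1-\L_2)w(x)-(\L_1-\L_2)w(x') = (\L_1-\L_2)g(x'),
\]
so I will rerun the dyadic scheme on $\phi^g_{x'}(y):=2g(x')-g(x'+y)-g(x'-y)$, now exploiting two complementary sources of smallness of $g$: globally, $\|g\|_{L^\infty(\R^n)}\le|h|^\gamma[w]_{C^\gamma(\R^n)}$; and locally, the mixed-difference estimate $|g(z_1)-g(z_2)|\le C\|w\|_{C^{2s+\alpha}(B_1)}|h|^\alpha|z_1-z_2|^{2s}$, which is valid on the regularity ball since $\alpha+\gamma\le 2s+\alpha$. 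These will inject an extra $|h|^\alpha$ factor into the per-scale test-function bound, yielding $|J_\rho|\le C\theta\|w\|_{C^{2s+\alpha}(B_1)}|h|^\alpha$ times a geometrically summable factor. When $\alpha\le\gamma$, the tail contribution will be absorbed via the global smallness $\|g\|_{L^\infty}\le|h|^\gamma[w]_{C^\gamma}\le|h|^\alpha[w]_{C^\gamma}$. When $\alpha>\gamma$, the hypothesis $[\L_i]_{\alpha-\gamma}\le M$ supplies $y$-regularity of each kernel, and adapting the argument of Lemma~\ref{lem:Lu_2}-\ref{it:lem_Lu2_iii} applied to each $\L_i$ separately will bound the tail by $CM[w]_{C^\gamma(\R^n)}|h|^\alpha$ in place of $\theta$.

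\textbf{Main obstacle.} The most delicate point will be controlling the medium dyadic range $|h|<\rho<1$ in the $C^\alpha$ estimate: taking the na\"ive minimum of the local $C^{2s+\alpha}$ and the mixed-difference bound will give $|J_\rho|\le C\theta\|w\|_{C^{2s+\alpha}(B_1)}|h|^\alpha$ with no $\rho$-decay, seemingly producing a logarithmic factor $|h|^\alpha\log(1/|h|)$ upon summation. Avoiding this will require a more refined test-function construction—essentially a finer interpolation between the local $C^{2s+\alpha}$ bound and the global $C^\gamma$ bound on $g$—so that summability remains geometric on both sides of $\rho=|h|$; the technical H\"older-space lemmas collected in Appendix~\ref{appA} should be the right tool here.
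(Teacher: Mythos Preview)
Your $L^\infty$ argument (dyadic decomposition, $C^\gamma$ for large scales, $C^{2s+\alpha}$ plus interpolation for small scales) is essentially the paper's Step~1.

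For the $C^\alpha$ bound, however, your approach diverges from the paper's and has two real gaps.

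\emph{First}, your tail argument does not recover the factor $\theta$ in front of $[w]_{C^\gamma}$. Using only $\|g\|_{L^\infty}\le |h|^\gamma[w]_{C^\gamma}$ for $\rho\ge 1$ you must fall back on the crude bound $|K_1-K_2|(B_{2\rho}\setminus B_\rho)\le 2\Lambda\rho^{-2s}$, since the hypothesis \eqref{iserbgieub} requires a $C^\gamma$ seminorm, not an $L^\infty$ bound, and $[\phi^g_{x'}]_{C^\gamma(\R^n)}\le 4[w]_{C^\gamma}$ carries no $|h|$ factor. You therefore get $C\Lambda|h|^\alpha[w]_{C^\gamma}$, not $C\theta|h|^\alpha[w]_{C^\gamma}$. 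The paper avoids this by splitting $w=u_1+u_2$ with $u_1:=\eta w\in C^{2s+\alpha}_c(B_1)$; for $u_1$ there is no tail beyond $\rho\sim 1$, and for $u_2$ (vanishing in $B_{2/3}$) one bounds $[\mathcal{R}u_2]_{C^\gamma(B_{1/2})}$ directly from \eqref{eq:firstineqlem} summed over $\rho\ge 1/6$. Without this splitting your scheme cannot prove the lemma as stated.

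\emph{Second}, your ``mixed-difference estimate'' $|g(z_1)-g(z_2)|\le C|h|^\alpha|z_1-z_2|^{2s}$ is not available when $2s>1$ (with $2s+\alpha\le 2$): from $[w]_{C^{2s+\alpha}}$ one only has $\min(|h|\,|z_1-z_2|^{2s+\alpha-1},|h|^{2s+\alpha-1}|z_1-z_2|)$, and the interpolation exponent needed for $|h|^\alpha|z_1-z_2|^{2s}$ lies outside $[0,1]$. This is precisely why the paper, after splitting, bounds the \emph{second-order} centered increment $\mathcal{R}u_1(x_\circ)+\mathcal{R}u_1(-x_\circ)-2\mathcal{R}u_1(0)$ rather than a first-order one. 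The resulting test function $\phi(x_\circ,y)=\delta_y^2u_1(x_\circ)+\delta_y^2u_1(-x_\circ)-2\delta_y^2u_1(0)$ is a fourth-order difference of $u_1$, and Lemma~\ref{lem:A_imp_2} provides two independent $C^\gamma_y$ bounds---one of size $\rho^{2s+\alpha-\gamma}$ and one of size $r^{2s+\alpha-\gamma}$ with $r=|x_\circ|$---whose minimum sums \emph{geometrically} on both sides of $\rho=r$, killing the logarithm you anticipated. The second-order increment together with the $L^\infty$ bound then yields the $C^\alpha$ seminorm.

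In short: the missing idea is the cutoff splitting $w=u_1+u_2$, and the clean way past the log loss is to estimate second-order (not first-order) increments of $(\L_1-\L_2)u_1$ in $x$.
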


\begin{proof}
We denote 
\[
\MR := \L_1-\L_2.
\]
We divide the proof into four steps.
\begin{steps}
 \item \label{step:firsttodo}
We first prove an $L^\infty$ bound for $\MR w$, \eqref{eq:Linfty_nondivform}.
 Dividing by a constant if necessary, we may assume $\|w\|_{C^{2s+\alpha}(B_1)}+[w]_{C^\gamma(\R^n)}\leq 1$, and by taking $\alpha$ smaller if necessary, we may also assume $2s+\alpha < 2$ and $2s+\alpha\neq 1$.

Let $x_\circ\in B_{1/2}$.
Taking $\phi(y) = 2w(x_\circ)-w(x_\circ+y)-w(x_\circ-y)$ in \eqref{iserbgieub}, we find that for any  $\rho>0$ (using that $[w]_{C^\gamma(\R^n)}\le 1$), 
\begin{equation}
\label{eq:firstineqlem}
\left|\int_{B_{2\rho}\setminus B_\rho} \hspace{-0.35cm} \big(2w(x_\circ)-w(x_\circ+y)-w(x_\circ-y)\big)  (K_1 - K_2)(dy)\right| \leq 4\theta\rho^{\gamma-2s}.
\end{equation}
On the other hand, since $w\in C^{2s+\alpha}(B_1)$ with $\|w\|_{C^{2s+\alpha}(B_1)}\le 1$, we have (see  Lemma~\ref{it:H7_gen})
\[
\begin{split}
\left\|\phi\right\|_{L^\infty(B_{2\rho})}& \le C \rho^{2s+\alpha},\\
\|\phi\|_{C^{2s+\alpha}(B_{2\rho})}& \le 4,
\end{split}
\] 
for any $\rho <\frac14$. 
By interpolation inequality (Lemma~\ref{lem:interp_mult}) applied to the function $\phi$ in $B_{2\rho}$ we obtain 
\[\left[ \frac{\phi}{\rho^{2s+\alpha-\gamma}} \right]_{C^\gamma(B_{2\rho}\setminus B_\rho)} \leq C,\]
to get, by \eqref{iserbgieub},
\begin{equation}
\label{eq:secineqlem}
\left|\int_{B_{2\rho}\setminus B_\rho} \hspace{-0.35cm} \big(2w(x_\circ)-w(x_\circ+y)-w(x_\circ-y)\big) (K_1 - K_2)(dy)\right| \leq C\theta\rho^{\alpha}.
\end{equation}
Using the first inequality, \eqref{eq:firstineqlem}, for $\rho=2^{k}$, $k=-1,0,1,2,3,...$, and the second one, \eqref{eq:secineqlem}, for $k=-2,-3,...$,  and summing a geometric series, we deduce that
\[
\left|\int_{\R^n} \big(2w(x_\circ)-w(x_\circ+y)-w(x_\circ-y)\big)\,  (K_1 - K_2)(dy)\right| \leq C\theta.
\]
This proves the $L^\infty$ bound for $\MR w$.
\item 
Let us now show the $C^\alpha$ bounds, \eqref{eq:Calpha_nondivform}.
For this, we  split $w=u_1+u_2$, with $u_1:=\eta w$ and $\eta\in C^\infty_c(B_1)$ such that $\eta \ge 0$, $\eta \equiv 0$ in $\R^n\setminus B_{3/4}$ and $\eta \equiv 1$ in $B_{2/3}$.
Notice that $\|u_1\|_{C^{2s+\alpha}(\R^n)}\le C\|w\|_{C^{2s+\alpha}(B_1)}$ and $[u_2]_{C^\gamma(\R^n)} \leq C[w]_{C^\gamma(\R^n)}$.

We prove first the bound for $\MR u_1$ in the case $2s+\alpha \le 2$. Dividing by a constant if necessary, we assume $\|u_1\|_{C^{2s+\alpha}(\R^n)}\leq 1$. Observe that the $L^\infty$ bound for $\MR u_1$ follows by \ref{step:firsttodo}. We now want to bound the $C^\alpha$ seminorm, and more precisely, we will bound 
\begin{equation}
\label{eq:sameasbefore222}
\big|\MR u_1(x_\circ) + \MR u_1(-x_\circ) - 2 \MR u_1(0) \big| \le C \theta r^\alpha.
\end{equation}

Let $x_\circ\in B_{1/2}$ be fixed, and $r := |x_\circ|$. We split
\[
\begin{split}
\MR u_1(x_\circ) & = \frac12 \int_{B_r}\big(2u_1(x_\circ)-u_1(x_\circ+y)-u_1(x_\circ-y)\big)(K_1-K_2)(dy)\\
& \hspace{-5mm} +  \frac12 \int_{\R^n\setminus B_r}\big(2u_1(x_\circ)-u_1(x_\circ+y)-u_1(x_\circ-y)\big)(K_1-K_2)(dy).
\end{split}
\]
Then, since $2s+\alpha \le 2$ and by interpolation as in the previous step one can show
\begin{equation}
\label{eq:theexp}
\begin{split}
\big[u_1(x_\circ+\cdot\,)+u_1(x_\circ-\cdot\,)-2u_1(x_\circ)\big]_{C^\gamma(B_{2\rho}\setminus B_\rho)} &\le  C\rho^{2s+\alpha-\gamma},\\
\big[u_1(x_\circ\pm \cdot\,)+u_1(-x_\circ\pm \cdot\,)-2u_1(\pm \,\cdot\,)\big]_{C^\gamma(\R^n)} &\le  Cr^{2s+\alpha-\gamma}.
\end{split}
\end{equation}
If we now denote $\delta^2_h v(x)$ the second order centered increments, 
\[
  \delta^2_h v(x) = \frac{v(x+h)+v(x-h)}{2} - v(x),
\]
and 
\[
\MR u_1(x_\circ) + \MR u_1(-x_\circ) - 2 \MR u_1(0)  = 2\int_{\R^n} \phi(x_\circ, y)\, (K_1-K_2)(dy)
\]
with 
\[
\phi(x_\circ, y) := \delta^2_{y} u_1(x_\circ) + \delta^2_{y} u_1(-x_\circ) -2 \delta^2_{y} u_1(0),
\]
then the expressions \eqref{eq:theexp} imply, by definition \eqref{iserbgieub},
\[
\left|\int_{B_{2\rho}\setminus B_\rho} \phi(x_\circ,y)(K_1-K_2)(dy) \right| \le C\theta \min\{\rho^\alpha, \rho^{\gamma-2s}r^{2s+\alpha-\gamma}\}.
\]
Summing first for $\rho\in (0, r)$ (and taking the first argument in the $\min$), and summing then for $\rho > r$ (and taking the second argument in the $\min$) we obtain \eqref{eq:sameasbefore222}.

\item Let us now show \eqref{eq:sameasbefore222} in the case  $2<2s+\alpha<3$. By Lemma~\ref{lem:A_imp_2}-\ref{it:A:3} and the interpolation in Lemma~\ref{lem:interp_mult}   we have, on the one hand,
\begin{equation}
\label{eq:2salpha1_2}
\begin{split}
& \big[u_1(x_\circ+\cdot\,)\hspace{-0.5mm}+\hspace{-0.5mm}u_1(x_\circ-\cdot\,)\hspace{-0.5mm}-\hspace{-0.5mm}2u_1(x_\circ)\hspace{-0.5mm}-\hspace{-0.5mm}u_1(\,\cdot\,)\hspace{-0.5mm}-\hspace{-0.5mm}u_1(-\,\cdot\,)\hspace{-0.5mm}+\hspace{-0.5mm}2u_1(0)\big]_{C^\gamma(B_{2\rho}\setminus B_\rho)} \leq \\
& \qquad \le C\rho^{2-\gamma} r^{2s+\alpha-2} + C \rho^{2\left(1-\frac{\gamma}{2s+\alpha}\right)} r^{(2s+\alpha-2)\left(1-\frac{\gamma}{2s+\alpha}\right)}=:I_1.
\end{split}
\end{equation}
On the other hand, we want to find an appropriate bound for 
\begin{equation}
\label{eq:2salpha2_2}
\begin{split}
& \big[u_1(x_\circ+\cdot\,)\hspace{-0.71mm}+\hspace{-0.71mm}u_1(-x_\circ+\cdot\,)\hspace{-0.71mm}-\hspace{-0.71mm}2u_1(\,\cdot\,)\hspace{-0.71mm}-\hspace{-0.71mm}u_1(x_\circ)\hspace{-0.71mm}-\hspace{-0.71mm}u_1(-x_\circ)\hspace{-0.71mm}+\hspace{-0.71mm}2u_1(0)\big]_{C^\gamma(B_{2\rho}\setminus B_\rho)} =\\
& \qquad\le 2[\delta_{x_\circ}^2 u_1(\,\cdot\,)]_{C^\gamma(B_{2\rho})}.
\end{split}
\end{equation}
We do so by separating into three possible cases according to the value of~$\gamma$:
\begin{itemize}
\item If $\gamma \le 2s+\alpha -2 < 1$, then we have 
\[
\frac{|\delta_{x_\circ}^2 u_1(y) - \delta_{x_\circ}^2 u_1(y')|}{|y-y'|^\gamma} \le C |x_\circ|^2 |y-y'|^{2s+\alpha-2-\gamma}\le Cr^2\rho^{2s+\alpha-2-\gamma},
\]
for all $y, y'\in B_{2\rho}$, where we have used that $2s+\alpha-2-\gamma \ge 0$, and $[u_1]_{C^{2s+\alpha}(\R^n)}\le 1$, together with Lemma~\ref{lem:A_imp_2}-\ref{it:A:3}.
\item If $2s+\alpha-2 < \gamma \le 1$, then we can use again Lemma~\ref{lem:A_imp_2}-\ref{it:A:3} but now with $t$ such that $t(2s+\alpha -2) + (1-t) = \gamma$, where $t\in [0, 1)$ by assumption on $\gamma$, to obtain 
\[
\frac{|\delta_{x_\circ}^2 u_1(y) - \delta_{x_\circ}^2 u_1(y')|}{|y-y'|^\gamma} \le C |x_\circ|^{2s+\alpha-\gamma} \le Cr^{2s+\alpha-\gamma}
\]
for all $y, y'\in B_{2\rho}$. 

\item Finally, if $\gamma > 1$ we use the second equation in Lemma~\ref{lem:A_imp_2}-\ref{it:A:1} applied to $\nabla u_1$ and with $t = \gamma-1$ to derive (since $[\nabla u_1]_{C^{2s+\alpha-1}(\R^n)}\le C$)
\[
\frac{|\delta_{x_\circ}^2 \nabla u_1(y) - \delta_{x_\circ}^2 \nabla u_1(y')|}{|y-y'|^{\gamma-1}} \le C |x_\circ|^{2s+\alpha-\gamma} \le Cr^{2s+\alpha-\gamma}
\]
for all $y, y'\in B_{2\rho}$. 
\end{itemize}

We thus have a bound for the expression \eqref{eq:2salpha2_2} of the form 
\begin{equation}
\label{eq:2salpha2_3}
[\delta_{x_\circ}^2 u_1(\,\cdot\,)]_{C^\gamma(B_{2\rho})}\le I_2^\gamma := \left\{\begin{array}{ll}
C r^2 \rho^{2s+\alpha-2-\gamma} & \text{if} \ \gamma\le 2s+\alpha -2\\
 C r^{2s+\alpha-\gamma} & \text{if} \ \gamma > 2s+\alpha -2. 
\end{array}
\right.
\end{equation}

In particular, \eqref{eq:2salpha1_2}-\eqref{eq:2salpha2_3} imply now 
\[
\left|\int_{B_{2\rho}\setminus B_\rho} \phi(x_\circ,y)(K_1-K_2)(dy) \right| \le C\theta \rho^{\gamma-2s}\min\{I_1,I_2^\gamma\}.
\]
We now sum as before for $\rho = 2^k$ and $k\in \Z$, separating between $\rho < r$ and $\rho > r$. That is, we consider 
\begin{equation}
\label{eq:wecando}
\big|\MR u_1(x_\circ) + \MR u_1(-x_\circ) - 2 \MR u_1(0) \big| \le C\theta \sum_{\substack{\rho = 2^k\\ \rho < r}}\rho^{\gamma-2s} I_1+ C\theta \sum_{\substack{\rho = 2^k\\ \rho \ge r}}\rho^{\gamma-2s} I_2^\gamma.
\end{equation}

For the first term in the sum, we observe that the exponents of $\rho$ are positive, since they are 
\[
2-\gamma + \gamma - 2s > 0
\]
and 
\[
2\left(1-\frac{\gamma}{2s+\alpha}\right) + \gamma -2s > 0 ,
\]
where we are using that $2s+\alpha > 2$ and $2 > 2s$. In the second term of \eqref{eq:wecando}, the exponent of $\rho$ is negative for any $\gamma$, since for $\gamma \le 2s+ \alpha-2$ it is $\alpha -2$, and for $\gamma > 2s+\alpha-2$ it is $\gamma -2s$. We can therefore perform the sum in \eqref{eq:wecando} and obtain \eqref{eq:sameasbefore222} also in this case. 

 Repeating around any point in $B_{1/2}$ and thanks to the $L^\infty$ bound for $\MR u_1$ 
we get 
\[
\|\MR u_1\|_{C^\alpha(B_{1/2})}\le C\theta.
\]

\item Finally, the bound for $u_2$ 
\[
[\MR u_2]_{C^\alpha(B_{1/2})}\le [\MR u_2]_{C^\gamma (B_{1/2})}\le C\theta  [w]_{C^\gamma(\R^n)}\quad\text{if}\quad \alpha \le \gamma
\]
follows directly from the expression \eqref{eq:firstineqlem} (with $w = u_2$), and summing for $\rho = 2^k$, for $k = -1,0,1,2,\dots$ (since $u_2 \equiv 0$ in $B_{1/2}$). On the other hand, the bound 
\[
[\MR u_2]_{C^\alpha(B_{1/2})} \le CM [w]_{C^\gamma(\R^n)}\quad\text{if}\quad \alpha > \gamma
\]
follows separately for $\L_1$ and $\L_2$ by using \eqref{eq:rem_Lu22} (and a rescaling and covering argument)\qedhere
\end{steps}
\end{proof}

Thanks to the previous Lemma, we can now prove  Proposition  \ref{prop-interior-linear-x}.

\begin{proof}[Proof of Proposition  \ref{prop-interior-linear-x}]
The proof is essentially the same as that of Theorem~\ref{thm-interior-linear-x}, however \eqref{ngfhhghg} needs to be replaced by
\begin{equation}\label{ngfhhghg2} 
\|\L(u,\cdot) - \L_0u\|_{C^\alpha(B_1)} \leq C\left(\delta\|u\|_{C^{2s+\alpha}(B_2)} + [u]_{C^\gamma(\R^n)}\right).
\end{equation}
The proof of \eqref{ngfhhghg2} follows exactly as the proof of \eqref{ngfhhghg} by replacing the use of Lemmas~\ref{lem:Lu} and \ref{lem:Lu_2}-\ref{it:lem_Lu2_i} by \eqref{eq:Linfty_nondivform} and \eqref{eq:Calpha_nondivform} in Lemma~\ref{lem-sdhhh}, respectively, with $\theta = \delta$ small. The fact that $\theta$ can be taken to be small is for the exact same reason as in the proof of Theorem~\ref{thm-interior-linear-x}.
\end{proof}

Finally, we have the:

\begin{proof}[Proof of Theorem \ref{thmB}]
The result is the combination of Propositions \ref{thm-interior-linear-x} (corresponding to the case $\gamma=0$) and \ref{prop-interior-linear-x} (case $\gamma>0$).
\end{proof}

\section{Schauder estimates in divergence form} 
\label{sec4}

From now on, nonlocal operators in divergence form with $x$-dependence (with kernels that are not necessarily absolutely continuous) are denoted as follows:
\begin{equation}\label{divergence-form0}
\begin{split}
\L (u,x) & = {\rm P.V.} \int_{\R^n}\big(u(x)-u(z)\big)K(x,dz) \\
 &  = {\rm P.V.} \int_{\R^n}\big(u(x)-u(x+y)\big)K(x,x+dy) ,
\end{split}
\end{equation}
where $(K(x, \cdot))_{x\in \R^n}$ is a family of measures  in $\R^n$ that satisfies the uniform ellipticity conditions 
\begin{equation}\label{divergence-ellipticity0}
r^{2s} \int_{B_{2r}(x)\setminus B_r(x)} K(x,dz)\le \Lambda\qquad\text{for all}\quad x\in \R^n
\end{equation}
and 
\begin{equation}\label{divergence-ellipticity1}
r^{2s-2}\inf_{e\in \S^{n-1}}\int_{B_{r}(x)}|e\cdot (x-z)|^{2} K(x,dz)\ge \lambda>0\qquad\text{for all}\quad x\in \R^n,
\end{equation}
as well as symmetry in the two variables, in the sense that
\begin{equation}\label{divergence-form1}
\begin{split}
\int_{A}\int_B  K(x,dz)&\, dx  =\int_B\int_A K(x,dz)\, dx\\
& \quad \textrm{for all}\  A,B\subset \R^n\ \text{Borel, such that} \ A\cap B = \varnothing.
\end{split}
\end{equation}
Observe that, when $(K(x, \cdot))_{x\in \R^n}$ are absolutely continuous with respect to the Lebesgue measure, then \eqref{divergence-form1} reads as
\[
K(x, z) = K(z, x)\qquad \text{for a.e.}\quad  (x, z)\in \R^n\times \R^n.
\]

Equations of the type
\begin{equation}
\label{eq:div_weak}\L(u,x)=f(x)\quad\textrm{in}\quad \Omega
\end{equation}
have a natural weak formulation:
 
\begin{defi} Let $s\in (0, 1)$, and let $\L(\cdot, x)$ be of the form \eqref{divergence-form0}-\eqref{divergence-ellipticity0}-\eqref{divergence-ellipticity1}-\eqref{divergence-form1}. Let $\Omega\subset \R^n$ be any bounded domain, and let $f\in L^p(\Omega)$ for some $p \ge \frac{2n}{n+2s}$ and $n > 2s$. Let $u$ be such that
\[
\iint_{\R^n\times \R^n\setminus(\Omega^c\times \Omega^c)} \left(u(x) - u(z)\right)^2 K(x, dz)\, dx<\infty.
\]
We say that $u$ is a \emph{weak solution} of \eqref{eq:div_weak} if 
\[
\begin{split}\frac12 \int_{\R^n}  \int_{\R^n}   \big(u(x)-u(z)\big)\big(\eta(x)-\eta(z)\big)K( x,dz)& \, dx  =\int_{\R^n}  f\eta 
\end{split}\]
for all $\eta\in C^\infty_c(\Omega)$.

We say that $u$ is a \emph{weak supersolution} of \eqref{eq:div_weak} (resp. \emph{weak subsolution} of \eqref{eq:div_weak}) and we denote it $\L(u, x) \ge f(x)$ in $\Omega$ (resp. $\L(u, x) \le f(x)$ in $\Omega$) if 
\[
\begin{split}\frac12 \int_{\R^n}  \int_{\R^n}   \big(u(x)-u(z)\big)\big(\eta(x)-\eta(z)\big)K( x,dz)& \, dx  \underset{\left(\text{resp. $\le$}\right)}{\ge} \int_{\R^n}  f\eta 
\end{split}\]
for all $\eta\in C^\infty_c(\Omega)$ with $\eta\ge 0$.
\end{defi}

It is important to notice that, in case of divergence-form equations \eqref{divergence-form0}, one cannot symmetrize the operator and write it in terms of  a second-order incremental quotient $2u(x)-u(x+y)-u(x-y)$.
In particular, when $s\geq\frac12$ one cannot evaluate in general $\L(u,x)$ pointwise\footnote{This also happens for operators in divergence form ${\rm div}(A(x) \nabla u)$ in the local case $s = 1$.}  even for smooth functions $u\in C^\infty_c(\Omega)$.

In order to obtain Schauder-type estimates, in addition to the uniform ellipticity assumptions \eqref{divergence-ellipticity0}-\eqref{divergence-ellipticity1} we need to assume some $C^\alpha$ regularity of the kernels in the $x$-variable.
More precisely, we assume 
\begin{equation}\label{reg-x-div}
\int_{B_{2\rho}(x)\setminus B_\rho(x)} \big|K(x+h,h+dz)-K(x,dz)\big|\leq M|h|^\alpha \rho^{-2s}
\end{equation}
for all $x, h\in \R^n$,   and $\rho > 0$.
We also need to assume that the kernel, at small scales, is (quantitatively) \emph{almost even}:  
\begin{equation}\label{reg-x-div-even}
\int_{B_{2\rho}\setminus B_\rho} \big|K(x,x+dy)-K(x,x-dy)\big|  \leq M\rho^{\alpha-2s}
\end{equation}
for all $x\in \R^n$,    and $\rho > 0$.

On the other hand, in some cases we will need to assume regularity in the $y$-variable as well, given by 
\begin{equation}\label{reg-x-div-y}
\int_{B_{2\rho}(x) \setminus B_\rho(x) } \big|K(x, h+dz) - K(x, dz)\big|  \le M |h|^\theta\rho^{-2s-\theta},
\end{equation}
(for some $\theta\in (0, 1]$) for all $h\in B_{\rho/2}$ and for all $x\in \R^n$ and $\rho > 0$. Observe that the previous condition is equivalent to asking that, using the notation in \eqref{Calpha-assumption}, $\sup_{x\in \R^n} [K(x, x+\,\cdot\,)]_\theta\le M$.

The interior Schauder estimates for nonlocal divergence form equations are the following (which is exactly Theorem \ref{thmC} in the new notation):

\begin{thm}\label{thm-interior-linear-x-div}
Let $s\in (0,1)$, $\alpha\in (0, 1]$, $\eps \in (0, \alpha)$, and let $\L$ be an operator of the form \eqref{divergence-form0}-\eqref{divergence-form1}, with kernels satisfying the ellipticity conditions \eqref{divergence-ellipticity0}-\eqref{divergence-ellipticity1}, and \eqref{reg-x-div}-\eqref{reg-x-div-even} for some  $M > 0$. 
 
Let $u\in C^{\beta}_{\rm loc}(B_1)\cap L^\infty_{2s-\eps}(\R^n)$ be a weak solution of \eqref{linear},
with $f\in X$, and  
\begin{equation}\label{sjdgneorij2}
\beta:=\left\{ \begin{array}{rl}
1+\alpha & \textrm{if}\quad s>{\textstyle \frac12}, \vspace{1mm}\\
1+\alpha-\eps &  \textrm{if}\quad s={\textstyle \frac12}, \vspace{1mm} \\
2s+\alpha &  \textrm{if}\quad s<{\textstyle \frac12},
\end{array}\right.\qquad X := \left\{
\begin{array}{ll}
C^{\beta-2s}(B_1)& \quad\text{if}\quad \beta > 2s,\\
L^{\frac{n}{2s-\beta}}(B_1)& \quad\text{if}\quad \beta < 2s.
\end{array}
\right.
\end{equation}
  Assume in addition that $\beta\neq1$, $\beta\neq 2s$, and that \eqref{reg-x-div-y} holds if $\beta > 2s$, with $\theta = \beta-2s$. 
Then,
\[
\|u\|_{C^{\beta}(B_{1/2})} \le C\left(\|u\|_{L^\infty_{2s-\eps}(\R^n)} + \|f\|_{X}\right)
\]
The constant $C$ depends only on $n$, $s$, $\alpha$, $\eps$, $\lambda$, $\Lambda$, and $M$. 
\end{thm}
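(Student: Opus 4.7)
\emph{Plan.} I would adapt the three-step scheme from the proof of Proposition~\ref{thm-interior-linear-x} in the non-divergence setting, namely: (i) a rescaling to reduce the perturbation constant $M$ in \eqref{reg-x-div} to an arbitrarily small $\delta$; (ii) freezing coefficients at a point $x_\circ \in B_{1/2}$ and applying a translation-invariant estimate to the symmetrized frozen operator $\L_\circ$ with measure
\[
K_\circ(dy) := \tfrac12\bigl(K(x_\circ, x_\circ+dy) + K(x_\circ, x_\circ-dy)\bigr),
\]
which is even and lies in $\GL$ (and in $\G_s(\lambda, \Lambda; \beta-2s)$ when $\beta>2s$, thanks to \eqref{reg-x-div-y} with $\theta = \beta-2s$); and (iii) controlling the error $\MR u := \L(u,\cdot) - \L_\circ u$ in the norm $X$ appearing in \eqref{sjdgneorij2}, then absorbing and covering.

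The rescaling (Step~\ref{step:1nondiv} in the proof of Proposition~\ref{thm-interior-linear-x}) carries over verbatim: with $u_\circ(x) := u(r_\circ x)$ and $K^{r_\circ}(x, dz) := r_\circ^{2s} K(r_\circ x, r_\circ\, dz)$, the assumption \eqref{reg-x-div} holds with constant $M r_\circ^\alpha = \delta$, while \eqref{reg-x-div-even}, \eqref{reg-x-div-y} and the ellipticity constants \eqref{divergence-ellipticity0}--\eqref{divergence-ellipticity1} are preserved (in fact \eqref{reg-x-div-even} improves by a factor $r_\circ^\alpha$). For the TI step, when $\beta > 2s$ I would apply Theorem~\ref{thm-interior-linear-2} with exponent $\beta-2s$, using that $[\L_\circ]_{\beta-2s}\le CM$ by \eqref{reg-x-div-y}; when $\beta < 2s$ I would instead invoke a TI nonlocal $L^{n/(2s-\beta)} \to C^\beta$ estimate of Calder\'on--Zygmund/Riesz-potential type (the same ingredient that feeds into Theorem~\ref{thmD}).

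The technical heart is the error estimate
\[
\|\MR u\|_X \le C\bigl(\delta\,\|u\|_{C^\beta(B_1)} + \|u\|_{L^\infty_{2s-\eps}(\R^n)}\bigr),
\]
which is the divergence-form analog of Lemma~\ref{lem-sdhhh}. I would split $\MR = (\L - \L_\circ^{\mathrm{sym}}) + (\L_\circ^{\mathrm{sym}} - \L_\circ)$, where $\L_\circ^{\mathrm{sym}}$ is the (generally non-even) translation-invariant operator with kernel $K(x_\circ, x_\circ + dy)$. The second summand is TI and supported on the antisymmetric part of $K(x_\circ, x_\circ + \cdot)$, so \eqref{reg-x-div-even} gives it small total mass scale-by-scale and yields the $\delta\,\|u\|_{C^\beta}$ bound at once. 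For the first summand, a dyadic decomposition of the kernel-difference $K(x, x+dy) - K(x_\circ, x_\circ+dy)$, paired with the second-order increment $2u(x)-u(x+y)-u(x-y)\lesssim |y|^\beta$ inside $B_1$ and the $L^\infty_{2s-\eps}$-control outside, uses \eqref{reg-x-div} to deliver a $C^{\beta-2s}$ (resp. $L^{n/(2s-\beta)}$) bound linear in $\delta$. Combined with the TI step, absorbing the $\delta$-term for $\delta$ universally small, and a covering/rescaling argument as in Step~\ref{step3} of Proposition~\ref{thm-interior-linear-x}, this yields the estimate on $B_{1/2}$.

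\emph{Main obstacle.} The hardest point is carrying out the error estimate when $s\ge\tfrac12$: the pointwise operator $\L(u,\cdot)$ is not a priori defined on $C^\beta$ functions with $\beta\le 2s$, so only a symmetrized second-order quotient is manageable, and everything must be made rigorous through the weak formulation together with the symmetry \eqref{divergence-form1}. Assumption \eqref{reg-x-div-even} is tailored exactly to make the symmetrization a lower-order perturbation, uniformly in $x_\circ$, and without it the antisymmetric part of $K(x_\circ,x_\circ+\cdot)$ would obstruct any pointwise manipulation. Equally delicate is the sub-$2s$ regime ($\beta<2s$), where one must pair the dyadic error bound with the nonlocal Calder\'on--Zygmund-type estimate alluded to above; I expect \eqref{reg-x-div-y} can be dispensed with in that regime (consistent with the statement), reflecting the fact that less regularity of the solution is being sought.
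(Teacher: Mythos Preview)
Your plan is essentially the paper's proof in the regime $\beta>2s$: rescale to make \eqref{reg-x-div}--\eqref{reg-x-div-even} hold with a small $\delta$, split off the even part, freeze it at a point, apply Theorem~\ref{thm-interior-linear-2} to the frozen even operator, and control the remainder. One small wrinkle: in your decomposition $\MR=(\L-\L_\circ^{\mathrm{sym}})+(\L_\circ^{\mathrm{sym}}-\L_\circ)$, the first summand has kernel $K(x,x+dy)-K(x_\circ,x_\circ+dy)$, which is \emph{not} even in $y$, so you cannot pair it directly with the second-order increment $2u(x)-u(x+y)-u(x-y)$ as you write. The paper instead splits $\L=\L^e+\L^o$ first and only then freezes the even part, so that the ``$x$-variation'' error $\L^e(u,\cdot)-\L^e_0u$ genuinely has an even kernel and the non-divergence argument from Proposition~\ref{thm-interior-linear-x} applies verbatim; the full odd part $\L^o(u,\cdot)$ is then bounded in $C^\theta$ using \eqref{reg-x-div-even} (and \eqref{reg-x-div-y} for the far-field). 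Your decomposition can be salvaged by a further even/odd split, but as written it is not quite right.

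The real gap is in the case $\beta<2s$ (i.e.\ $s>\tfrac12$ and $1+\alpha<2s$). You acknowledge that $\L(u,\cdot)$ is not pointwise defined on $C^\beta$ functions and that one must work through the weak formulation, but your proposed fix---bound $\|\MR u\|_{L^{n/(2s-\beta)}}$ via dyadic sums and then feed this into a TI $L^q\to C^\beta$ estimate---does not go through. The difficulty is that the error $\L(u,\cdot)-\L_\circ u$ is only a distribution tested against $\eta\in C^\infty_c$, and there is no mechanism to upgrade the bilinear bound
\[
\Bigl|\iint (u(x)-u(z))(\eta(x)-\eta(z))\bigl(K(x,dz)-K_\circ(x-z)\bigr)\,dx\Bigr|\lesssim \delta\,[u]_{C^\beta}\,(\text{something in }\eta)
\]
to a duality against $\|\eta\|_{L^{q'}}$; the quantity $\eta(x)-\eta(z)$ needs smoothness, not just integrability. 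The paper abandons the perturbative scheme entirely here and runs a compactness/blow-up argument \`a la Simon: Proposition~\ref{prop:compactness-x-div} proves a quantitative Liouville statement by testing the weak formulation of second-order increments $V_k=u_k(\cdot+h)+u_k(\cdot-h)-2u_k$ against fixed $\eta\in C^\infty_c$, showing the error integrals $E_0,E_{\pm h}$ are $O(1/k)$ (this is where the weak bilinear bound suffices, since $\eta$ is fixed), and passing to a limit that contradicts Theorem~\ref{thm:Liouville}. This is then fed into Lemma~\ref{lem-interior-blowup} to produce Proposition~\ref{prop:interior-linear-x-div}, from which the theorem follows by cut-off and absorption. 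Your sketch would need to be replaced by this mechanism in the $\beta<2s$ regime.
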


 The strategy to prove this result will be different in cases $\beta > 2s$ and $\beta < 2s$. When $\beta > 2s$, we will treat $\L$ as a (nonsymmetric) operator in non-divergence form, and argue as in the proof of Proposition~\ref{thm-interior-linear-x}. Instead, when $\beta < 2s$, the proof will be by contradiction and blow-up \emph{\`a la} L. Simon \cite{Sim97}, similarly to the proof of Theorem~\ref{thmA} that we provide later on.

\begin{proof}[Proof of Theorem \ref{thm-interior-linear-x-div} (and \ref{thmC}) in case $\beta>2s$]
When $\beta>2s$ (that is,  $\theta := \beta - 2s > 0$), the operator $\L$ can be evaluated pointwise on smooth functions $u$, and thus it can be seen as a (nonsymmetric) equation in non-divergence form.
Using this, we can follow the strategy of the proof of Theorem~\ref{thm-interior-linear-x} above. We divide the proof into  six steps. 

\begin{steps}
\item \label{step:1div} 
As in \ref{step:1nondiv} of the proof of Theorem~\ref{thm-interior-linear-x}, we start with an initial reduction wherein, up to considering the rescalings $u_\circ(x) := u(r_\circ x)$ and $\L^{r_\circ}$ (with kernel  $K^{r_\circ} (x, dz) = r_\circ^{ 2s}K(r_\circ x, r_\circ \, dz)$, where $K$ is the kernel of $\L$), we can assume that the operator $\L$ satisfies the ellipticity conditions \eqref{divergence-ellipticity0}-\eqref{divergence-ellipticity1}, has regularity in $y$ given by \eqref{reg-x-div-y} for some $M > 0$, but conditions \eqref{reg-x-div} and \eqref{reg-x-div-even} now become
\begin{equation}\label{reg-x-div-2}
\int_{B_{2\rho}(x)\setminus B_\rho(x)} \big|K(x+h,h+dz)-K(x,dz)\big|  \leq Mr_\circ^\alpha |h|^\alpha \rho^{-2s} =: \delta |h|^\alpha \rho^{-2s}
\end{equation}
for all $x, h\in \R^n$,   and $\rho > 0$; and 
\begin{equation}\label{reg-x-div-even-2}
\int_{B_{2\rho}\setminus B_\rho} \big|K(x,x+dy)-K(x,x-dy)\big|  \leq Mr_\circ^\alpha \rho^{\alpha-2s} =: \delta \rho^{\alpha -2s}
\end{equation}
for all $x\in \R^n$,    and $\rho > 0$; for some $\delta > 0$ a small universal constant to be chosen.

\item  Let us denote by $\L^e$ and $\L^o$ respectively the even and odd parts of the operator $\L$. Namely, we have 
\[
 \L^e(u, x) = {\rm P.V.} \int_{\R^n}\big(u(x)-u(x+y)\big)K^e(x,dy),
 \]
 where
 \[
  K^e(x, dy) := \frac{K(x, x+dy) + K(x, x-dy)}{2}
\]
is an even kernel, in the sense that $K^e(x, dy) = K^e(x, -dy)$ (i.e., $K^e(x, dy)$ is a symmetric measure); and 
\[
 \L^o(u, x) = {\rm P.V.} \int_{\R^n}\big(u(x)-u(x+y)\big)K^o(x,dy),
 \]
 where
 \[
  K^o(x, dy) := \frac{K(x, x+dy) - K(x, x-dy)}{2}
\]
is an odd kernel, in the sense that $K^o(x, dy) = -K^o(x, -dy)$. With these definitions, we have that $\L(u, x) = \L^e(u, x) + \L^o(u, x)$. Notice, moreover, that in the case of $\L^e$ we can symmetrize its expression as 
\[
 \L^e(u, x) = \frac12 \int_{\R^n}\big(2u(x)-u(x+y)-u(x-y)\big)K^e(x,dy),
 \]
which is now well-defined in $B_1$ (even without the principal value) because $u\in C^\beta_{\rm loc}(B_1)$ with $\beta > 2s$; cf. Lemma~\ref{lem:Lu}. In fact, the operator $\L^e$ is an operator in non-divergence form like the ones in Theorem~\ref{thm-interior-linear-x}, where the ellipticity conditions are satisfied thanks to \eqref{divergence-ellipticity0}-\eqref{divergence-ellipticity1} and linearity, $\sup_{x\in\R^n} [K^e(x, \cdot)]_\theta \le M$  holds  for $K^e$ thanks to \eqref{reg-x-div-y} and the triangle inequality, and \eqref{x-dependence-L2} holds for $K^e$ with $M = \delta$ by \eqref{reg-x-div-2} and the triangle inequality again. 

We proceed now as in the beginning of \ref{step:step2prove} in the proof of Theorem~\ref{thm-interior-linear-x}. Let $\L^e_0 \in \G_s(\lambda, \Lambda;\theta)$ be the translation invariant operator with kernel $K^e(0, y)$.
By the regularity estimates for translation invariant equations, Theorem~\ref{thm-interior-linear-2}, we have
\begin{equation}\label{ngfhhghg2222-2} 
\|u\|_{C^{\beta}(B_{1/4})} \le C\left(\|u\|_{L^\infty_{2s-\eps}(\R^n)} + \|\L^e_0u\|_{C^{\theta}(B_{1/2})}\right).
\end{equation}
Moreover, 
\begin{equation}\label{ngfhhghg2222-1} \|\L^e_0u\|_{C^\theta(B_{1/2})} \leq \|f\|_{C^\theta(B_{1/2})}+\|\L^o(u,\cdot)\|_{C^\theta(B_{1/2})} + \|\L^e(u,\cdot) - \L^e_0u\|_{C^\theta(B_{1/2})},
\end{equation}
and thanks to \eqref{ngfhhghg}  in \ref{step:step2prove} of the proof of Theorem~\ref{thm-interior-linear-x} (since $\L^e$ is now an operator in non-divergence form) we have 
\begin{equation}\label{ngfhhghg2222} 
\|\L^e(u,\cdot) - \L^e_0u\|_{C^\theta(B_{1/2})} \leq C\left(\delta\|u\|_{C^{\beta}(B_1)} + \|u\|_{L^\infty_{2s-\eps}(\R^n)}\right).
\end{equation}

It only remains to be bounded the $C^\theta$ norm of  $\L^o(u, \cdot)$. 

\item  That is, we now want to prove 
\begin{equation}\label{ngfhhghg3} 
\|\L^o(u,\cdot) \|_{C^{\theta}(B_{1/2})} \leq C\left(\bar \delta\|u\|_{C^{\beta}(B_1)} + \|u\|_{L^\infty_{2s-\eps}(\R^n)}\right),
\end{equation}
for some $\bar \delta > 0$ that is small whenever $\delta$ is small. In fact, we will show 
\begin{equation}\label{ngfhhghg3_2} 
\|\L^o(u,\cdot) \|_{C^{\theta}(B_{1/2})} \leq C\left(\delta_1 \|u\|_{C^{\beta_1}(B_1)} + \|u\|_{L^\infty_{2s-\eps}(\R^n)}\right),
\end{equation}
where
\begin{equation}\label{eq:deltabeta} 
\delta_1 := \delta^{\min\{1-\eps, \frac{2s}{\alpha}\}},\qquad \beta_1 := \min\{1, \beta\}.
\end{equation}

We will use that the operator $\L^o$ has a kernel $K^o$ that satisfies (combining upper ellipticity and \eqref{reg-x-div-even-2})
\begin{equation}\label{eq:Lo0}
\int_{B_{2\rho}\setminus B_\rho} \big|K^o(x,dy)\big| \leq C\rho^{-2s} \min\{1, \delta \rho^\alpha\}\quad\text{for all}\  \rho > 0, \, x\in \R^n, 
\end{equation}
as well as 
\begin{equation}\label{eq:Lo1}
\int_{B_{2\rho} \setminus B_\rho } \big|K^o(x+h,dy)-K^o(x,dy)\big| \leq \delta |h|^\alpha \rho^{-2s}\quad\text{for all}\  \rho > 0, \, x\in \R^n, 
\end{equation}
(by \eqref{reg-x-div-2} and the triangle inequality). 

  We start with the $L^\infty$ bound. For any $x\in B_{1/2}$ we have 
\[
\begin{split}
|\L^o(u, x)|& \le   \int_{\R^n}\left|u(x) - u(x+y)\right|\left|K^o(x, dy)\right| \\
&  \le  \|u\|_{C^{\beta_1}(B_1)} \int_{B_{1/2}}|y|^{\beta_1}\left|K^o(x, dy)\right| \\
&\quad +  C\|u\|_{L^\infty_{2s-\eps}(\R^n)} \int_{B^c_{1/2}}|y|^{2s-\eps}\left|K^o(x, dy)\right|.
\end{split}
\]
We   split each integral in dyadic balls and thanks to \eqref{eq:Lo0} and the fact that $\beta_1 + \alpha - 2s  \ge \theta > 0$ we get
\begin{equation}
\label{eq:Loinf}
|\L^o(u, x)|\le    C\left( \delta \|u\|_{C^{\beta_1}(B_1)}+\|u\|_{L^\infty_{2s-\eps}(\R^n)}\right),
\end{equation}
which gives the $L^\infty$ bound in  \eqref{ngfhhghg3_2} and \eqref{ngfhhghg3}.

The next step is to bound the $C^\theta$ seminorm. To do that, we consider $u=u_1+u_2$ where $u_1:=u\eta$ with $\eta\in C^\infty_c(B_{3/4})$, $0\le \eta \le 1$, and $\eta \equiv 1 $ in $B_{2/3}$, and bound each of the seminorms for $\L^o(u_1, x)$ and $\L^o(u_2, x)$ separately. 

\item  We focus our attention first on finding a bound for the seminorm of $\L^o(u_1, x)$, where we recall that $u_1\in C^\beta_c(B_1)$ with $\|u_1\|_{C^\beta(\R^n)}\le C \|u\|_{C^\beta(B_1)}$. Let us denote, given $\bar x\in B_{1/2}$ fixed, $\L_{\bar x}^o$ to be the translation invariant operator with kernel $K^o(\bar x, y)$ (which is not necessarily positive). We will bound, for any $x_1, x_2\in B_{1/2}$,
\begin{equation}
\label{eq:twotermsdif}
\left|\L^o(u_1, x_1) - \L^o(u_1, x_2)\right|\le |\L_{x_1}^o u_1(x_1)-\L_{x_2}^o u_1(x_1)|+|\L_{x_2}^o u_1(x_1)-\L_{x_2}^o u_1(x_2)|.
\end{equation}
For the first term, we have
\[
\begin{split}
|\L_{x_1}^o u_1(x_1)\hspace{-0.2mm}-\hspace{-0.2mm}\L_{x_2}^o u_1(x_1)|\hspace{-0.5mm}& \le\hspace{-0.5mm}\int_{\R^n} \hspace{-0.5mm}\left|u_1(x_1) - u_1(x_1+y)\right|\left|K^o(x_1, dy) \hspace{-0.5mm}- \hspace{-0.5mm}K^o(x_2, dy)\right|  \\
& = I_1 +I_2,
\end{split}
\]
where, since $u_1(x_1+y) = 0$ for $y\in \R^n\setminus B_2$, by denoting $r := |x_1-x_2|$,
\[
I_1 := |u_1(x_1)|\int_{\R^n\setminus B_2 } |K^o(x_1, dy)-K^o(x_2, dy)|  \le C \delta \|u\|_{L^\infty(B_1)}{r}^\alpha
\]
(using \eqref{eq:Lo1}), and 
\[
\begin{split}
I_2  & := \int_{B_2}\left|u_1(x_1) - u_1(x_1+y)\right|\left|K^o(x_1, dy)  -  K^o(x_2, dy)\right| \\
& \le C\|u\|_{C^{\beta_1}(B_1)} \int_{B_2}|y|^{\beta_1}\left|K^o(x_1, dy)  -  K^o(x_2, dy)\right| \\
& \le   C\|u\|_{C^{\beta_1}(B_1)} \sum_{\substack{\rho = 2^{-k}\\k \ge 0}} \rho^{\beta_1}\int_{B_{2\rho} \setminus B_{\rho}}\left|K^o(x_1, dy)  -  K^o(x_2, dy)\right|,
\end{split}
\]
(recall \eqref{eq:deltabeta}). By \eqref{eq:Lo0}-\eqref{eq:Lo1} we have  
\[
\begin{split}
I_2  & \le  C \delta\|u\|_{C^{\beta_1}(B_1)} \sum_{\substack{\rho = 2^{-k}\\k \ge 0}}  \rho^{\beta_1 - 2s} \min\{\rho^{\alpha}, {r}^\alpha\},\end{split}
\]
and we can split the sum into 
\[
\begin{split}
   {r}^\alpha \sum_{\substack{\rho = 2^{-k}\\1\ge\rho \ge {r}}}  \rho^{\beta_1 - 2s} +\sum_{\substack{\rho = 2^{-k}\\\rho \le {r}}}  \rho^{\beta_1 - 2s+\alpha},
\end{split}
\]
where the second term can be bounded by $C{r}^{\beta_1-2s+\alpha}$, since we have that ${\beta_1-2s+\alpha} > 0$;  and the first term is bounded by 
\[
 {r}^\alpha \sum_{\substack{\rho = 2^{-k}\\1\ge \rho \ge {r}}}  \rho^{\beta_1- 2s}\le \left\{
 \begin{array}{ll}
 C{r}^\alpha & \quad \text{if}\quad s < \frac12,\\
 C{r}^\alpha|\log{r}|& \quad \text{if}\quad s = \frac12,\\
  C{r}^{1+\alpha - 2s} & \quad \text{if}\quad s > \frac12.
 \end{array}
 \right.
\]
Using that ${r}^\alpha|\log{r}|\le C_\eps {r}^{\alpha -\eps}$, in all cases we have 
\[
I_2 \le C \delta \|u\|_{C^{\beta_1}(B_1)}  {r}^{\theta}.
\]
Together with the bound on $I_1$ and the fact that $\alpha \ge \theta$, we obtain 
\begin{equation}
\label{eq:prevtog}
|\L_{x_1}^o u_1(x_1)\hspace{-0.2mm}-\hspace{-0.2mm}\L_{x_2}^o u_1(x_1)| \le C\delta \|u\|_{C^{\beta_1}(B_1)} {r}^\theta. 
\end{equation}

Now, for the second term in \eqref{eq:twotermsdif} we use that, since $\beta_1 \le 1$, 
\[
|u_1(x_1) - u_1(x_1+y) - u_1(x_2) + u_1(x_2+y)|\le C\|u\|_{C^{\beta_1}(B_1)}\min\{{r}^{\beta_1}, |y|^{\beta_1}\},
\]
and thus, by \eqref{eq:Lo0}, 
\[
|\L_{x_2}^o u_1(x_1)-\L_{x_2}^o u_1(x_2)| \le C \|u\|_{C^{\beta_1}(B_1)}\sum_{\rho = 2^k} \rho^{-2s} \min\{\rho^{{\beta_1}} , {r}^{{\beta_1}}\} \min\{1, \delta \rho^\alpha\}.
\]
We split the sum into three terms according to the value of $\rho\in (0, \infty) = (0, r)\cup (r,   \delta^{-\frac{1}{\alpha}}) \cup (\delta^{-\frac{1}{\alpha}}, \infty)$ and bound it by
\begin{equation}
\label{eq:three_terms}
\delta \sum_{\substack{\rho = 2^k\\\rho \le  r}}  \rho^{-2s+\alpha+{\beta_1}} +\delta{r}^{\beta_1} \sum_{\substack{\rho = 2^k\\r \le \rho \le \delta^{-\frac{1}{\alpha}}}} \rho^{-2s+\alpha} +\sum_{\substack{\rho = 2^k\\ \delta^{-\frac{1}{\alpha}}\le\rho }} \rho^{-2s}{r}^{\beta_1}.
\end{equation}
The first term is immediately bounded by $C\delta r^{-2s+\alpha+{\beta_1}}$, since $-2s+\alpha +{\beta_1} > 0$; and the third term is bounded by $C{r}^{\beta_1} \delta^{\frac{2s}{\alpha}}$. Observe that, since $-2s+\alpha+\beta_1\ge\beta - 2s$ and $\beta_1\ge \beta - 2s$, we have that the first and third terms are bounded by $C\delta_1 r^{\beta-2s}$ (recall \eqref{eq:deltabeta}).

 For the second term, we have different values according to the relative values of $\alpha$ and $s$, as follows:
\begin{itemize}
\item If $2s < \alpha$, then the second term in \eqref{eq:three_terms} is bounded by  $C\delta{r}^{\beta_1} \delta^{\frac{2s-\alpha}{\alpha}} = C{r}^{\beta_1} \delta^{\frac{2s}{\alpha}}$. 
\item If $2s > \alpha$, then the second term in \eqref{eq:three_terms} is bounded by  $C\delta r^{-2s+\alpha+{\beta_1}}$. 
\item Finally, in the case $2s = \alpha\le 1$, the second term is bounded by the factor $C\delta {r}^{\beta_1}\left(|\log r|+|\log \delta|\right)\le C_{ \eps} \delta^{1- \eps}{r}^{{\beta_1}- \eps}$. We have $\beta_1-\eps \ge \beta-2s$ as well (since $\eps$ was small).
\end{itemize}

Putting all terms together, we have shown that the sum \eqref{eq:three_terms} is bounded by $\delta_1 r^{\beta-2s}$ (recall \eqref{eq:deltabeta}) and thus we have 
\[
|\L_{x_2}^o u_1(x_1)-\L_{x_2}^o u_1(x_2)| \le C \delta_1 \|u\|_{C^{\beta_1}(B_1)}
 r^{\theta}.
\]
With \eqref{eq:prevtog}, this gives in \eqref{eq:twotermsdif}
\begin{equation}
\label{eq:twotermsdif2}
\left|\L^o(u_1, x_1) - \L^o(u_1, x_2)\right|\le  C \delta_1 \|u\|_{C^{\beta_1}(B_1)}|x_1-x_2|^{\theta},
\end{equation}
which bounds the seminorm $[\L^o(u_1, \cdot)]_{C^{\theta}(B_{1/2})}\le C\delta_1 \|u\|_{C^{\beta_1}(B_1)}$.

 \item Let us now bound the $C^\theta$ seminorm of $\L^o(u_2, x)$, where now $u_2$ satisfies that $u_2 \equiv 0$ in $B_{2/3}$ and $\|u_2\|_{L^\infty_{2s-\eps}(\R^n)}\le C\|u\|_{L^\infty_{2s-\eps}(\R^n)}$. To do it, we will use the regularity of the kernel in the $y$ variable, namely, \eqref{reg-x-div-y}. By triangle inequality and in terms of $K^o$, this condition reads as 
 \begin{equation}\label{reg-x-div-y_o}
\int_{B_{2\rho} \setminus B_\rho} \left|K^o(x, h+dy) - K^o(x, dy)\right|  \le M |h|^\theta\rho^{-2s-\theta} \quad\text{for all}\quad h\in B_{\rho/2},
\end{equation}
for all $x\in \R^n$, $\rho > 0$.

 Using the same notation as before, we again split as, for any $x_1, x_2\in B_{1/2}$,
\begin{equation}
\label{eq:twotermsdif_222}
\left|\L^o(u_2, x_1) - \L^o(u_2, x_2)\right|\le |\L_{x_1}^o u_2(x_1)-\L_{x_2}^o u_2(x_1)|+|\L_{x_2}^o u_2(x_1)-\L_{x_2}^o u_2(x_2)|.
\end{equation}
 Now, for the first  term we have, since $u_2(x_1) = u_2(x_2) = 0$, 
\[
\begin{split}
|\L_{x_1}^o u_2(x_1)-\L_{x_2}^o u_2(x_1)|&  \le \int_{\R^n\setminus B_{1/6}} |u_2(x_1+y)| \left|K^o(x_1, dy) - K^o(x_2, dy)\right| \\
& \le C \delta   \|u\|_{L^\infty_{2s-\eps}(\R^n)} |x_1-x_2|^\alpha, 
\end{split}
\]
where we have used the fact that $|u_2(z)|\le C(1+|z|^{2s-\eps}) \|u\|_{L^\infty_{2s-\eps}(\R^n)}$ together with \eqref{eq:Lo1}. 

For the second term, we have 
\[
\begin{split}
|\L_{x_2}^o u_2(x_1)\hspace{-0.3mm}-\hspace{-0.3mm}\L_{x_2}^o u_2(x_2)| & \hspace{-0.3mm}\le\hspace{-0.3mm} \int_{ B^c_{1/2}}\hspace{-1.5mm} |u_2(z)|\left|K^o(x_2, dz\hspace{-0.3mm}-\hspace{-0.3mm}x_1) \hspace{-0.3mm}-\hspace{-0.3mm} K^o(x_2, dz\hspace{-0.3mm}-\hspace{-0.3mm}x_2)\right|  \\
& \le C M \|u\|_{L^\infty_{2s-\eps}(\R^n)} |x_1-x_2|^\theta, 
\end{split}
\]
where we have used again the bound on $u_2$ now together with  \eqref{reg-x-div-y_o}. Since $\alpha \ge \theta$, we have shown that 
\begin{equation}
\label{eq:tihrdto}
[\L^o(u_2, \cdot)]_{C^\theta(B_{1/2})}\le C (M+\delta) \|u\|_{L^\infty_{2s-\eps}(\R^n)}. 
\end{equation}
\item Thanks to \eqref{eq:Loinf}-\eqref{eq:twotermsdif2}-\eqref{eq:tihrdto} we have now shown \eqref{ngfhhghg3_2}, and thus, \eqref{ngfhhghg3}. Together with \eqref{ngfhhghg2222-2}-\eqref{ngfhhghg2222-1}-\eqref{ngfhhghg2222} this shows  
\[
\|u\|_{C^{\beta}(B_{1/4})} \le C\left(\bar \delta\|u\|_{C^{\beta}(B_1)} + \|u\|_{L^\infty_{2s-\eps}(\R^n)} + \|f\|_{C^\theta(B_{1/2})}\right)
\]
for all $u\in C^{\beta}(B_1)\cap L^\infty_{2s-\eps}(\R^n)$, where $\L(u, \cdot) = f$, and where we can still choose $\bar \delta$ small (depending only on $n$, $s$, $\alpha$, $\eps$, $\lambda$, $\Lambda$, and $M$).  The proof now follows  in a standard way (cf. \ref{step3} in the proof of Theorem \ref{thm-interior-linear-x}), after choosing $\delta$ small enough, to deduce that (after a covering argument)
\[
\|u\|_{C^{\beta}(B_{1/2})} \le C\left(\|u\|_{L^\infty_{2s-\eps}(\R^n)} + \|f\|_{C^\theta(B_{1})}\right),
\]
for all $u\in C^{\beta}(B_1)\cap L^\infty_{2s-\eps}(\R^n)$, where $\L(u, \cdot) = f$, as wanted. 
\qedhere
\end{steps}
\end{proof}

In case $\beta<2s$ (i.e., $s>\frac12$ and $\beta=1+\alpha<2s$) the equation cannot be seen as a non-divergence-form equation, and thus we need a different argument.
We will proceed by a compactness argument, by first showing a quantitative Liouville-type estimate for solutions in very large balls.
 
\begin{prop}
\label{prop:compactness-x-div}
Let $s\in (\frac12,1)$, $\delta > 0$, and $\alpha \in(0,1)$ be  such that $1+\alpha<2s$.
Let $\L$ be an operator of the form \eqref{divergence-form0}-\eqref{divergence-form1}, with kernels satisfying the ellipticity conditions \eqref{divergence-ellipticity0}-\eqref{divergence-ellipticity1}.
Let $u\in C^{1+\alpha}(\R^n)\cap L^\infty_{2s-\eps}(\R^n)$ for some $\eps>0$ and $f\in L^q_{\rm loc}(\R^n)$ for some $q \ge 1$, and with $[u]_{C^{1+\alpha}(\R^n)}\le 1$ and $\|f\|_{L^q(B_{{1}/{\delta}})} \le \delta$.
Assume in addition that 
\begin{equation}\label{x-dependence-div}
\begin{split} 
& \|\nabla u\|_{L^\infty(\R^n)} \hspace{-1mm}\int_{B_{2\rho}(x)\setminus B_\rho(x)} \hspace{-1mm}\big|K(x\hspace{-0.5mm}+\hspace{-0.5mm}h,dz\hspace{-0.5mm}+\hspace{-0.5mm}h)\hspace{-0.5mm}-\hspace{-0.5mm}K(x,dz)\big|    \leq \delta |h|^\alpha \rho^{-2s}  \\
 & \|\nabla u\|_{L^\infty(\R^n)} \hspace{-1mm}\int_{B_{2\rho}\setminus B_\rho}\hspace{-1mm} \big|K(x,x+dy)\hspace{-0.5mm}-\hspace{-0.5mm}K(x, x-dy)\big|    \leq \delta \rho^{\alpha-2s}
\end{split}
\end{equation}
for all $x,h\in \R^n$, and $\rho > 0$.
Suppose also that $u$ satisfies
\[
\L(u,x) = f\quad\text{in}\quad B_{{1}/{\delta}}
\]
in the weak sense.

Then, for every $\eps_\circ > 0$ there exists $\delta_\circ >0 $ depending only on $\eps_\circ$, $n$, $s$, $\alpha$, $q$, $\eps$, $\lambda$, and $\Lambda$,  such that if $\delta < \delta_\circ$,
\[
\|u-\ell\|_{C^{1}(B_1)}\le \eps_\circ,
\]
where $\ell(x)=u(0)+\nabla u(0)\cdot x$.
\end{prop}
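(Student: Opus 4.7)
The plan is a contradiction-and-compactness argument. Assume the conclusion fails: there exist $\eps_\circ>0$ and sequences $\delta_k\downarrow 0$, $u_k,f_k,\L_k$ satisfying the hypotheses with parameter $\delta_k$, but with $\|u_k-\ell_k\|_{C^1(B_1)}>\eps_\circ$. Setting $\tilde u_k:=u_k-\ell_k$, the normalizations $\tilde u_k(0)=0$, $\nabla\tilde u_k(0)=0$, and $[\tilde u_k]_{C^{1+\alpha}(\R^n)}\le 1$ force $|\nabla\tilde u_k(x)|\le |x|^\alpha$ and $|\tilde u_k(x)|\le |x|^{1+\alpha}$ on $\R^n$, so $\tilde u_k$ is uniformly bounded in $L^\infty_{1+\alpha}(\R^n)$. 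By Arzel\`a--Ascoli and a diagonal extraction, along a subsequence $\tilde u_k\to u_\infty$ in $C^{1+\alpha'}_{\rm loc}(\R^n)$ for every $\alpha'<\alpha$, with $u_\infty$ inheriting the normalizations at $0$ and the global pointwise bounds. In parallel, the upper ellipticity \eqref{divergence-ellipticity0} gives uniform control of the measures $K_k(0,0+\,\cdot\,)$ on each dyadic annulus, so Banach--Alaoglu produces (after a further diagonal extraction) a weak-$*$ limit $K_\infty$ as a Radon measure on $\R^n\setminus\{0\}$; the bounds \eqref{eq:Kellipt_gen_L}-\eqref{eq:Kellipt_gen_l} pass to $K_\infty$, and the second estimate in \eqref{x-dependence-div} with $\delta_k\to 0$ forces $K_\infty$ to be even, so $K_\infty$ defines a translation-invariant operator $\L_\infty\in\GL$.

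The central step is to pass the weak equation to the limit and show $\L_\infty u_\infty=0$ in $\R^n$. Fix $R>0$ and $\eta\in C_c^\infty(B_R)$; for $k$ large, $B_R\subset B_{1/\delta_k}$ and the weak form of $\L_k u_k=f_k$ applies. The right-hand side $\int f_k\eta$ is bounded by $\|f_k\|_{L^q(B_R)}\|\eta\|_{L^{q'}}=O(\delta_k)$ by H\"older. On the left, I substitute $u_k=\tilde u_k+\ell_k$. The piece involving $\ell_k$ equals $\nabla u_k(0)$ times an integral that, after decomposing $K_k(x,x+dy)$ into its even and odd parts, captures precisely the failure of $K_k$ to be simultaneously $x$-independent and even: using translation invariance of $\int\eta$ to eliminate the leading term of the even part, the two inequalities of \eqref{x-dependence-div}---weighted exactly by $\|\nabla u_k\|_\infty$ so as to compensate for the a priori unboundedness of $\nabla u_k(0)$---together with the dyadic summability provided by $s>\tfrac12$ and $1+\alpha<2s$, bound this piece by $O(\delta_k R^{n+\alpha}\|\eta\|_{C^1})$. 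The piece involving $\tilde u_k$ is handled by the usual near-/far-diagonal split: near the diagonal the $C^{1+\alpha'}$ convergence of $\tilde u_k$ together with $\int\min(1,|y|^2)K_k(x,x+dy)\le C$ allows passing to the limit; in the complement, the growth $|\tilde u_k(x)|\le |x|^{1+\alpha}$ with $1+\alpha<2s$ provides uniformly integrable tails, the first inequality of \eqref{x-dependence-div} replaces $K_k(x,x+\,\cdot\,)$ by $K_k(0,\,\cdot\,)$ up to a vanishing error, and the weak-$*$ convergence then yields the limit. Sending $k\to\infty$ and then $R\to\infty$ gives $\L_\infty u_\infty=0$ weakly in $\R^n$.

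Finally, since $u_\infty\in L^\infty_{1+\alpha}(\R^n)$ with $1+\alpha<2s$ and $\L_\infty\in\GL$, Theorem~\ref{thm:Liouville} gives $u_\infty(x)=a\cdot x+b$, and the normalizations $u_\infty(0)=0$, $\nabla u_\infty(0)=0$ force $u_\infty\equiv 0$. On the other hand, locally uniform $C^1$ convergence together with $\|\tilde u_k\|_{C^1(\overline{B_1})}>\eps_\circ$ gives $\|u_\infty\|_{C^1(\overline{B_1})}\ge\eps_\circ$, a contradiction. I expect the main technical obstacle to lie in the middle step, where two delicate points must be reconciled: making the contribution of the linear part $\ell_k$ to the weak form negligible despite the potential unboundedness of $\nabla u_k(0)$---possible only thanks to the precise $\|\nabla u\|_\infty$-weighting in \eqref{x-dependence-div}---and pushing the $x$-dependent kernel $K_k(x,x+\,\cdot\,)$ through a weak-$*$ limit against a function whose tail is only borderline integrable, owing to $1+\alpha<2s$.
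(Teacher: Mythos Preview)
Your overall strategy is correct and reaches the same contradiction, but it differs from the paper's in one key respect. The paper does \emph{not} pass the equation for $\tilde u_k=u_k-\ell_k$ directly; instead it fixes $h\in\R^n$ and works with the second-order increments $V_k(x)=u_k(x+h)+u_k(x-h)-2u_k(x)$. Since $[u_k]_{C^{1+\alpha}}\le 1$, each $V_k$ is \emph{globally bounded} (by $C|h|^{1+\alpha}$), so compactness and tail estimates become trivial; moreover the linear part $\ell_k$ is annihilated by the second difference, so one never argues separately about its contribution. After replacing $K_k(x,\cdot)$ by the even part of $K_k(0,\cdot)$ and taking limits, the paper obtains $\bar\L_0 V=0$, hence $V$ constant; doing this for every $h$ forces $v=\lim\tilde u_k$ to be a quadratic polynomial, and $[v]_{C^{1+\alpha}}\le 1$ collapses it to $0$. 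Your direct approach has the merit of invoking Liouville only once (for $u_\infty$ itself), at the cost of a growing limit function and a more delicate passage.

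Two points in your sketch need attention. First, the assertion that $K_\infty$ is even does not follow from the second inequality in \eqref{x-dependence-div}: that inequality carries the weight $\|\nabla u_k\|_\infty$, which may tend to $0$, and then nothing prevents the odd part of $K_k(0,\cdot)$ from surviving in the limit. The cure is simple: either replace $K_k(x,\cdot)$ by the \emph{even part} of $K_k(0,\cdot)$ from the outset (this is exactly what the paper does, and the second inequality is used precisely for that extra piece), or observe that the symmetric bilinear form $\iint(\cdot)(\cdot)K_0(z-x)\,dz\,dx$ sees only the even part of a translation-invariant $K_0$, so evenness is automatic in the weak formulation. Second, in your far-diagonal error for the $\tilde u_k$ piece you invoke the growth bound $|\tilde u_k(x)|\le|x|^{1+\alpha}$; this does not produce the factor $\|\nabla u_k\|_\infty$ needed to cancel the weighting in the first inequality of \eqref{x-dependence-div}, so as written that error need not vanish. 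Use instead the Lipschitz bound $|\tilde u_k(x)-\tilde u_k(z)|\le 2\|\nabla u_k\|_\infty|x-z|$ (equivalently, perform the kernel-replacement error once for the full $u_k$ before splitting off $\ell_k$), exactly as in the paper's estimate of $E_0$; the dyadic sum $\sum_\rho \rho\min(1,\rho)\rho^{-2s}$ then converges because $s>\tfrac12$. With these two fixes your argument goes through.
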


\begin{proof}
  For the sake of readability, we will assume here that 
\[
K(x, dz) = K(x, z)\ dz,
\]
so that, in particular, $K(x, z) = K(z, x)$ from \eqref{divergence-form1}.  
We divide it into three steps:
\begin{steps}
\item Assume that the statement does not hold. 
Then, there exists some $\eps_\circ > 0$ such that for any $k\in \N$, there are $u_k\in C_{\rm loc}^{1+\alpha}(\R^n)\cap L^\infty_{2s-\eps}(\R^n)$ with $[u_k]_{C^{1+\alpha}(\R^n)}\le 1$, $f_k\in L^q(B_k)$ with $\|f_k\|_{L^q(B_k)}\le \frac{1}{k}$, and $\L^{(k)}$ as in the statement such that
\[
\L^{(k)} (u_k, x) = f_k \quad\text{in}\quad B_k,
\]
in the weak sense, with
\begin{equation}
\label{eq:byassumptdiv}
\begin{split} 
&\|\nabla u_k\|_{L^\infty(\R^n)}\int_{B_{2\rho}(x)\setminus B_\rho(x)} \big|K^{(k)}(x+\bar h,z+\bar h)-K^{(k)}(x,z)\big|dz \leq \frac{1}{k} |\bar h|^\alpha \rho^{-2s}\\
& \|\nabla u_k\|_{L^\infty(\R^n)}\int_{B_{2\rho}\setminus B_\rho} \big|K^{(k)}(x,x+y)-K^{(k)}(x, x-y)\big|dy \leq \frac{1}{k} \rho^{\alpha-2s},
\end{split}
\end{equation}
for all $x, \bar h\in \R^n$, and $\rho > 0$,  but 
\[
\|u_k - \ell_k\|_{C^1(B_1)}\ge \eps_\circ.
\]

Let us define, for a fixed $h\in \R^n$,
\[
 V_k := u_k(x+h)+u_k(x-h)-2u_k(x), \quad \ F_k := f_k(x+h)+f_k(x-h)-2f_k(x),
\]
with $\|F_k\|_{L^q(B_k)} \le \frac{4}{k}$, $\|V_k\|_{C^{1+\alpha}(\R^n)} \leq C_h$.

\item Let $K_0^{(k)}(y)$ be the kernel denoting the even part of $K(x, x+y)$ at $x = 0$, i.e., $K_0^{(k)}(y) := \frac12K^{(k)}(0,y)+\frac12K^{(k)}(0,-y)$. Observe that by assumption, the operators with kernel $K_0^{(k)}$ belong to $\GL$. 

Now, for any $\eta\in C^\infty_c(\R^n)$ we have (for $k$ such that ${\rm supp}\,\eta\subset B_k$)
\[\int_{\R^n} \int_{\R^n} \hspace{-2mm} \big(V_k(x)-V_k(z)\big)\big(\eta(x)-\eta(z)\big) K^{(k)}_0(z-x)dz\, dx = 2\int_{\R^n}\hspace{-2mm}  F_k\eta+ 2E_0-E_h-E_{-h},\]
where
\[E_0:= \int_{\R^n} \int_{\R^n}  E_0(x, z)\, dz\,dx,\]
with 
\[
E_0(x, z) := \big(u_k(x)-u_k(z)\big)\big(\eta(x)-\eta(z)\big) \big(K^{(k)}(x,z) - K^{(k)}_0(z-x)\big),
\]
and the expressions for $E_{\pm h}$ are analogous, replacing $u_k(x)$, $u_k(z)$, and $K^{(k)}(x,z)$ by $u_k(x\pm h)$, $u_k(z\pm h)$,  and $K^{(k)}(x\pm h,z\pm h)$ respectively. By symmetry in the roles of $x$ and $z$ (here we use that $K(x, z) = K(z, x)$, or \eqref{divergence-form1} in the case of non-absolutely continuous measures),
\begin{equation}
\label{eq:absE0}
\begin{split}
|E_0|& \le \int_{{\rm supp}\, \eta}\int_{\R^n} |E_0
(x, z)|\, dz\, dx+\int_{\R^n}\int_{{\rm supp}\, \eta} |E_0 (x, z)|\, dz\, dx\\
& \le 2\int_{{\rm supp}\, \eta}\int_{\R^n} |E_0
(x, z)|\, dz\, dx.
\end{split}
\end{equation}

Using that $u_k$ are globally Lipschitz and that $\eta\in C^1(\R^n)$ we have
\[
\begin{split} \big|u_k(x)-u_k(z)\big|\,\big|\eta(x)-\eta(z)\big| & \leq  C\|\nabla u\|_{L^\infty(\R^n)}|x-z|\min\{1, |x-z|\},
\end{split}\]
for some constant depending only on $\eta$. Hence, from \eqref{eq:absE0} we have
\begin{equation}
\label{eq:E0bound}
|E_0|\le C \sum_{\substack{\rho = 2^j\\ j \in \Z}}  \rho \min\{1, \rho\}\int_{{\rm supp}\, \eta}I_\rho(x) \, dx.
\end{equation}
where
\[
I_\rho(x) := \|\nabla u\|_{L^\infty(\R^n)}\int_{B_{2\rho}\setminus B_\rho}
\left| K^{(k)}(x, x+y)-K_0^{(k)}(y)\right|\, dy.
\]

We split
\[
\begin{split}
\left|K^{(k)}(x, x+y)-K_0^{(k)}(y)\right|  & \le \frac12 \left|K^{(k)}(x, x+y)-K^{(k)}(0, y)\right| \\
& \quad +\frac12 \left|K^{(k)}(x, x-y)-K^{(k)}(0, -y)\right|\\
& \quad +\frac12 \left|K^{(k)}(x, x+y)-K^{(k)}(x, x-y)\right|,
\end{split}
\] 
so that 
\[
\begin{split}
I_\rho (x)&  \le \|\nabla u\|_{L^\infty(\R^n)}\int_{B_{2\rho}\setminus B_\rho}
\left| K^{(k)}(x, x+y)-K^{(k)}(0, y)\right|\, dy\\
& \quad +\frac{\|\nabla u\|_{L^\infty(\R^n)}}{2} \int_{B_{2\rho}\setminus B_\rho}
\left| K^{(k)}(x, x+y)-K^{(k)}(x, x-y)\right|\, dy.
\end{split}
\]
The first term can be bounded thanks to the first inequality in \eqref{eq:byassumptdiv} (putting $\bar h = x$ and $x = 0$), and the second term is directly bounded thanks to the second inequality in \eqref{eq:byassumptdiv}, so that we obtain 
\[
I_\rho(x) \le \frac{1}{k} \rho^{-2s}\left(|x|^\alpha +   \rho^\alpha\right). 
\]
Putting it back into the bound on $|E_0|$, \eqref{eq:E0bound}, we get
\[
 |E_0|\le \frac{C}{k } \sum_{\substack{\rho = 2^j\\ j \in \Z}}  \rho^{1-2s} \min\{1, \rho\}\int_{{\rm supp}\, \eta}\left(|x|^\alpha +   \rho^\alpha\right)\, dx.
\]
Since $\eta$ is fixed and compactly supported, the last integral is finite and bounded  by $(1+\rho^\alpha)$ (up to a constant depending only on $\eta$), so that 
\[
 |E_0|\le \frac{C}{k } \sum_{\substack{\rho = 2^j\\ j \in \Z}}  \rho^{1-2s} \min\{1, \rho\}\left(1+   \rho^\alpha\right) \le \frac{C}{k},
\]
where the last sum is finite since 
\[
\rho^{1-2s} \min\{1, \rho\}\left(1+   \rho^\alpha\right)\le\left\{
\begin{array}{ll}
2\rho^{1-2s+\alpha}&\quad\text{if} \ \rho \ge 1,\\
2\rho^{2-2s}&\quad\text{if} \ \rho < 1,
\end{array}
\right.
\]
and  $1 +\alpha < 2s < 2$.

\item We have proved $|E_0|\le \frac{C}{k}$, and the same bounds hold for $E_{\pm h}$ as well (for example, simply by considering the test functions $\eta(\cdot\pm h)$ instead of $\eta$). 
Thus, together with the fact that  $\|F_k\|_{L^q(B_k)} \le \frac{4}{k}$, we get 
\[\int_{\R^n} \int_{\R^n} \big(V_k(x)-V_k(z)\big)\big(\eta(x)-\eta(z)\big) K^{(k)}_0(z-x)dz\,dx \longrightarrow 0 \quad \textrm{as}\ k\to\infty.\]
By  Arzel\`a-Ascoli, the functions $V_k$ converge (up to a subsequence) in $C^1_{\rm loc}(\R^n)$ to a function $V\in C^{1+\alpha}(\R^n)$. 
On the other hand,   the measures $\min\{1,|y|^2\}K^{(k)}_0(dy)$ converge weakly to a limiting measure $\min\{1,|y|^2\}\bar K_0(dy)$ that will satisfy 
\[\int_{\R^n} \int_{\R^n} \big(V(x)-V(z)\big)\big(\eta(x)-\eta(z)\big) \bar K_0(z-x)dz\, dx= 0.\]
Notice  that since $V\in C^{1+\alpha}(\R^n)$, it has finite energy  on compact sets. Together with the fact that the previous equality holds  for any $\eta\in C^\infty_c(\R^n)$, we have that $V$ solves $\bar \L_0 V=0$ in $\R^n$ in the weak sense (where $\bar \L_0\in \GL$ is the limiting operator with kernel $\bar K_0$), and by Liouville's theorem, Theorem~\ref{thm:Liouville}, we get that $V$ is constant.
If we define 
\[
v_k := u_k - \ell_k,
\]
then, $v_k(0) = |\nabla v_k(0)|  = 0$ with $[v_k]_{C^{1+\alpha}(\R^n)}\le 1$ and $v_k\to v$ in $C^1_{\rm loc}$, for some $v$ with $v(0) = |\nabla v(0)| = 0$ and $[v]_{C^{1+\alpha}(\R^n)}\le 1$. Since  $V_k(x) = v_k(x+h) + v_k(x-h) - 2 v_k(x)$, we get $V(x)=v(x+h)+v(x-h)-2v(x)$, which is constant (for every $h\in \R^n$ fixed). By Lemma~\ref{it:H10} we have that $v$ is a quadratic polynomial, and the condition $[v]_{C^{1+\alpha}(\R^n)}\le 1$ implies it is actually linear. Because it also satisfies $v(0) = |\nabla v(0)| = 0$, it must be $v \equiv 0$, which is a contradiction with $\|v\|_{C^1(B_1)}\ge \eps_\circ>0$. 
\qedhere
\end{steps} 
\end{proof}

Thanks to the previous Liouville-type statement, we get the following estimate, which is almost the desired result in case $\beta < 2s$:

\begin{prop}
\label{prop:interior-linear-x-div}
Let $s\in (\frac12,1)$ and $\alpha\in (0,1)$ be such that $1+\alpha<2s$, and let $q = \frac{n}{2s-1-\alpha}$. Let $\L$ be an operator of the form \eqref{divergence-form0}-\eqref{divergence-form1}, with kernels satisfying \eqref{divergence-ellipticity0}-\eqref{divergence-ellipticity1} and \eqref{reg-x-div}-\eqref{reg-x-div-even} for some $M > 0$.
Then, the following holds. 

For any $\delta > 0$ there exists $C_\delta$ such that
\[
[u]_{C^{1+\alpha}(B_{1/2})} \le \delta [u]_{C^{1+\alpha}(\R^n)} + C_\delta \left(\|u\|_{L^\infty(B_1)} +\|\nabla u\|_{L^\infty(\R^n)} + \|f\|_{L^q(B_1)}\right)
\]
for any $u \in C^{1+\alpha}_c(\R^n)$ satisfying $\L(u,x)=f$ in $B_1$ in the weak sense.
The constant $C_\delta$ depends only on $\delta$, $n$, $s$, $\alpha$,    $M$, $\lambda$, and $\Lambda$.
\end{prop}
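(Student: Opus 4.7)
The plan is to combine the abstract blow-up dichotomy of Lemma~\ref{lem-interior-blowup} with the quantitative Liouville-type statement of Proposition~\ref{prop:compactness-x-div}. I apply Lemma~\ref{lem-interior-blowup} with $\mu = 1+\alpha$, $\nu = 1$ (note $1+\alpha \notin \Z$ because $1+\alpha < 2s \le 2$), and functional $\mathcal{S}(u) := \|\nabla u\|_{L^\infty(\R^n)} + \|\L(u,\cdot)\|_{L^q(B_1)}$, arguing by contradiction. If the interpolation estimate fails for some $\delta > 0$, we obtain a sequence $u_k \in C^{1+\alpha}_c(\R^n)$ with associated admissible operators $\L^{(k)}$ and right-hand sides $f_k := \L^{(k)}(u_k,\cdot)$, centers $x_k \in B_{1/2}$, and scales $r_k \to 0$, such that the rescaled functions
\[
v_k(y) := \frac{u_k(x_k + r_k y)}{r_k^{1+\alpha}\,[u_k]_{C^{1+\alpha}(\R^n)}}
\]
satisfy $\|v_k - p_k\|_{C^1(B_1)} > \delta/2$, where $p_k$ is the affine Taylor polynomial of $v_k$ at $0$, together with
\[
\frac{\|\nabla u_k\|_{L^\infty(\R^n)} + \|f_k\|_{L^q(B_1)}}{[u_k]_{C^{1+\alpha}(\R^n)}} \longrightarrow 0.
\]

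The heart of the argument is then to verify that the $v_k$ satisfy the hypotheses of Proposition~\ref{prop:compactness-x-div} with a vanishing parameter, and to use its conclusion to contradict the lower bound above. The rescaled kernels $\tilde K^{(k)}(y,dw) := r_k^{2s} K^{(k)}(x_k + r_k y, \, x_k + r_k\, dw)$ preserve the ellipticity constants $\lambda, \Lambda$, and their regularity constants in \eqref{reg-x-div}-\eqref{reg-x-div-even} become $M r_k^\alpha$. Since $[v_k]_{C^{1+\alpha}(\R^n)} = 1$ and $\|\nabla v_k\|_{L^\infty(\R^n)} = r_k^{-\alpha}\|\nabla u_k\|_{L^\infty(\R^n)}/[u_k]_{C^{1+\alpha}(\R^n)}$, the product that appears in hypothesis \eqref{x-dependence-div} telescopes:
\[
\|\nabla v_k\|_{L^\infty(\R^n)} \cdot M r_k^\alpha \,=\, M\,\frac{\|\nabla u_k\|_{L^\infty(\R^n)}}{[u_k]_{C^{1+\alpha}(\R^n)}} \longrightarrow 0,
\]
and analogously for the almost-even condition. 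The critical choice $q = n/(2s-1-\alpha)$ makes the $L^q$ scaling of the rescaled right-hand side $\tilde f_k(y) := r_k^{2s-1-\alpha} f_k(x_k + r_k y)/[u_k]_{C^{1+\alpha}(\R^n)}$ invariant, yielding $\|\tilde f_k\|_{L^q(B_R)} = \|f_k\|_{L^q(B_{r_k R}(x_k))}/[u_k]_{C^{1+\alpha}(\R^n)} \to 0$ for any fixed $R$ (using $r_k \to 0$ to guarantee $B_{r_k R}(x_k) \subset B_1$). Fixing $\eps_\circ := \delta/4$ in Proposition~\ref{prop:compactness-x-div} produces a threshold $\delta_\circ$; both smallness conditions then hold for $k$ large, and the proposition gives $\|v_k - \ell_k\|_{C^1(B_1)} \le \delta/4$ with $\ell_k(y) = v_k(0) + \nabla v_k(0)\cdot y = p_k(y)$, contradicting $\|v_k - p_k\|_{C^1(B_1)} > \delta/2$.

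The main conceptual point is the telescoping identity $\|\nabla v_k\|_{L^\infty(\R^n)} \cdot M r_k^\alpha = M \|\nabla u_k\|_{L^\infty(\R^n)}/[u_k]_{C^{1+\alpha}(\R^n)}$: the $r_k^{-\alpha}$ factor generated by rescaling the Lipschitz bound exactly cancels the $r_k^\alpha$ improvement in the kernel's regularity constant, and what remains vanishes by the blow-up normalization. This is precisely why Proposition~\ref{prop:compactness-x-div} is formulated with the kernel hypothesis weighted by $\|\nabla u\|_{L^\infty(\R^n)}$---it is tailored to this scale. Minor technicalities to dispense with include checking that $B_{r_k/\delta_\circ}(x_k) \subset B_1$ for $k$ large (automatic since $\delta_\circ$ is fixed while $r_k \to 0$), and that Lemma~\ref{lem-interior-blowup} accommodates varying operators $\L^{(k)}$ within the admissible class---this is routine since its underlying compactness/maximization argument runs on operator-function pairs with constants depending only on $n, s, \alpha, \lambda, \Lambda, M$.
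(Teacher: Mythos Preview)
Your proposal is correct and follows essentially the same approach as the paper: apply Lemma~\ref{lem-interior-blowup} with $\mu=1+\alpha$ and $\mathcal{S}$ built from $\|\nabla u\|_{L^\infty(\R^n)}$ plus the $L^q$ norm of the right-hand side, then use the rescaling identity $\|\nabla v_k\|_{L^\infty(\R^n)}\cdot M r_k^\alpha = M\|\nabla u_k\|_{L^\infty(\R^n)}/[u_k]_{C^{1+\alpha}(\R^n)}\to 0$ together with the scale-invariance of the $L^q$ norm (from $q=n/(2s-1-\alpha)$) to feed the blow-ups into Proposition~\ref{prop:compactness-x-div}. The only difference worth noting is cosmetic: the paper defines $\mathcal{S}(w)$ as an \emph{infimum} of $\|g\|_{L^q(B_1)}$ over all admissible operators $\tilde\L$ with $\tilde\L(w,\cdot)=g$, which makes $\mathcal{S}$ a single functional depending only on the structural constants and thus fits Lemma~\ref{lem-interior-blowup} verbatim, whereas you fix $\L$ in the definition of $\mathcal{S}$ and then appeal in the last paragraph to the lemma ``accommodating varying operators''---the paper's formulation is the clean way to implement exactly what you describe there.
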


\begin{proof}
Let us denote,  for $w\in C_c^{1+\alpha}(\R^n)$,
\[
\tilde{\mathcal{S}} (w) := \inf\left\{\|g\|_{L^q(B_1)} :  
\begin{array}{l}
\tilde \L(w,x)=g \ \text{in the weak sense, for some $\tilde\L$ of}\\
\text{the form \eqref{divergence-form0}-\eqref{divergence-form1}-\eqref{divergence-ellipticity0}-\eqref{divergence-ellipticity1}}\\\text{and satisfying \eqref{reg-x-div}-\eqref{reg-x-div-even}, with $M>0$.}
\end{array}\right\}.
\]

We use Lemma~\ref{lem-interior-blowup} with $\mu = 1+\alpha$ and
\[
\mathcal{S}(w) =  \left\{
\begin{array}{ll} \displaystyle
\tilde{\mathcal{S}} (w) + \|\nabla w\|_{L^\infty(\R^n)} & \text{if}\quad w\in  C_c^{1+\alpha}(\R^n).
\\
\infty & \text{otherwise,}
\end{array}
\right.
\]
Notice that the mapping $\mathcal{S}:C^{1+\alpha}(\R^n)\to \R_{\ge 0}$ depends only on $n$, $s$, $\alpha$,    $M$, $\lambda$, and~$\Lambda$.

Thus, either Lemma~\ref{lem-interior-blowup}~\ref{it:lem_int_blowup_i} holds, in which case we would have
\[
[u]_{C^{1+\alpha}(B_{1/2})} \le \delta [u]_{C^{1+\alpha}(\R^n)} + C_\delta \left(\|u\|_{L^\infty(B_1)} +\|\nabla u\|_{L^\infty(\R^n)} + \|f\|_{L^q(B_1)}\right),
\]
or there exists a sequence $u_k\in C^{1+\alpha}_c(\R^n)$ and $\L_{k}$ of the previous form such that $\L_k(u_k,x)=f_k$ in the weak sense,
\begin{equation}
\label{eq:fktozero-x-div}
\frac{\|f_k\|_{L^q(B_1)}+\|\nabla u_k\|_{L^\infty(\R^n)}}{[u_k]_{C^{1+\alpha}(B_{1/2})}} \le \frac{2\mathcal{S}(u_k)}{[u_k]_{C^{1+\alpha}(B_{1/2})}}\to 0,
\end{equation}
and for some $x_k \in B_{1/2}$ and $r_k \downarrow 0$,
\[
v_k(x) := \frac{u_k(x_k+r_k x)}{r_k^{1+\alpha}[u_k]_{C^{1+\alpha}(\R^n)}}
\]
satisfies 
\begin{equation}
\label{eq:vkcontradiction-x-div}
\|v_k - \ell_k\|_{C^1(B_1)} > \frac{\delta}{2},
\end{equation}
where $\ell_k$ is the 1st order Taylor polynomial of $v_k$ at 0.  
Then, by scaling, there exists an operator of the form \eqref{divergence-form0}-\eqref{divergence-form1}-\eqref{divergence-ellipticity0}-\eqref{divergence-ellipticity1}, $\tilde \L_k$, such that 
\[
\tilde \L_k(v_k,x) = r_k^{2s-1-\alpha}\frac{f_k(x_k+r_k x)}{ [u_k]_{C^{1+\alpha}(\R^n)}} =:\tilde f_k(x)
\]
in the weak sense.
More precisely, if $\L_k$ has kernel $K^k(x,z)$, then $\tilde \L_k$ has kernel $\tilde K^k(x,dz)$ given by 
\[
\tilde K^k (x,dz) := r_k^{2s} K^k (x_k+r_kx,x_k+r_k \, dz).
\]
Moreover, since $\L_k$ satisfies \eqref{reg-x-div}-\eqref{reg-x-div-even}, $\tilde\L_k$ also satisfies them with a smaller $M$, i.e., as in \ref{step:1div} of the proof of Theorem~\ref{thm-interior-linear-x-div} in case $\beta>2s$ (on page \pageref{step:1div}) we have 
\[
\int_{B_{2\rho}(x)\setminus B_\rho(x)} \big|\tilde K^k(x+h,h+dz)-\tilde K^k(x,dz)\big| \leq   Mr_k^{\alpha} |h|^\alpha \rho^{-2s},
\]
and 
\[
\int_{B_{2\rho}\setminus B_\rho} \big|\tilde K^k(x,x+dy)-\tilde K^k(x,x-dy)\big|  \leq Mr_k^\alpha \rho^{\alpha-2s}
\]
for all $x, h\in \R^n$,    and $\rho > 0$.
We also have
\[   \|\nabla v_k\|_{L^\infty(\R^n)} \leq \frac{r_k^{-\alpha}\|\nabla u_k\|_{L^\infty(\R^n)}}{[u_k]_{C^{1+\alpha}(\R^n)}},\]
so that 
\[
\begin{split}
&  \|\nabla v_k\|_{L^\infty(\R^n)} \hspace{-1mm}\int_{B_{2\rho}(x)\setminus B_\rho(x)}\hspace{-1mm} \big|\tilde K^k(x+h,h+dz)-\tilde K^k(x,dz)\big| \leq  \\
& \hspace{7.5cm}\le  \frac{M \|\nabla u_k\|_{L^\infty(\R^n)}}{[u_k]_{C^{1+\alpha}(\R^n)}} |h|^\alpha \rho^{-2s},
\\
&  \|\nabla v_k\|_{L^\infty(\R^n)}\hspace{-1mm} \int_{B_{2\rho}\setminus B_\rho}\hspace{-1mm} \big|\tilde K^k(x,x+dy)-\tilde K^k(x,x-dy)\big|  \leq \frac{M \|\nabla u_k\|_{L^\infty(\R^n)}}{[u_k]_{C^{1+\alpha}(\R^n)} \rho^{2s-\alpha}}
\end{split}
\]
for all $x, h\in \R^n$, $\rho > 0$, and with $\|\nabla u_k\|_{L^\infty(\R^n)} / [u_k]_{C^{1+\alpha}(\R^n)} \to 0$ as $k\to \infty$ (thanks to \eqref{eq:fktozero-x-div}).

Also from \eqref{eq:fktozero-x-div} and since $q = \frac{n}{2s-1-\alpha}$,
\[
\|\tilde f_k\|_{L^q (B_{1 /(2r_k)})} 
= \frac{\|f_k(x_k+r_k\,\cdot\,)\|_{L^q(B_{1 /(2r_k)})}}{r_k^{1+\alpha-2s} [u_k]_{C^{1+\alpha}(\R^n)}}
\leq  r_k^{2s-1-\alpha-\frac{n}{q}}\frac{\|f_k\|_{L^q(B_1)}}{[u_k]_{C^{1+\alpha}(\R^n)}}
\to 0, 
\] 
as $k\to \infty$. 
In all, since by definition $[v_k]_{C^{1+\alpha}(\R^n)} = 1$,  we have that $v_k$ satisfies all the hypotheses of Proposition~\ref{prop:compactness-x-div} for any fixed $\delta_\circ>0$, if $k$ is large enough. 
In particular, taking $\eps_\circ$ sufficiently small in Proposition~\ref{prop:compactness-x-div} we get a contradiction with \eqref{eq:vkcontradiction-x-div}.
\end{proof}

We can now give the final part of the proof of Theorem \ref{thm-interior-linear-x-div}:

\begin{proof}[Proof of Theorem \ref{thm-interior-linear-x-div} (and \ref{thmC}) in case $\beta<2s$]
Notice that, since $\beta<2s$, we have $\beta=1+\alpha<2s$ and $s\in(\frac12,1)$.
By Proposition~\ref{prop:interior-linear-x-div}, for any $\delta> 0$ there exists $C_\delta$ depending only on $\delta$, $n$, $s$, $\alpha$,    $M$, $\lambda$, and $\Lambda$,    such that 
\begin{equation}
\label{eq:touse-x-div}
[u]_{C^{\beta}(B_{1/2})}\le \delta [u]_{C^{\beta}(\R^n)} + C_\delta \left(\|u\|_{L^\infty(B_1)}+\|\nabla u\|_{L^\infty(\R^n)}+\|f\|_{L^q(B_1)}\right)
\end{equation}
for any $u\in C^{\beta}_c(\R^n)$ satisfying $\L(u,x)=f$ in the weak sense, where $ q := \frac{n}{2s-\beta} = \frac{n}{2s-1-\alpha}$.  

Let $\eta\in C^\infty_c(B_3)$ such that $\eta \equiv 1 $ in $B_{2}$, and consider the function $u\eta$ for $u\in C^{\beta}(\R^n)\cap L^\infty_{2s-\eps}(\R^n)$ satisfying $\L(u,x)=f$ in the weak sense.
Since $u - \eta u \equiv 0$ in $B_{2}$, we have that
\begin{equation}\label{claim-x-div}
\|\L (u-\eta u,x)\|_{L^\infty(B_1)} \le C \|u\|_{L^\infty_{2s-\eps}(\R^n)}.
\end{equation}

Hence, we have $\L (\eta u,x)=g$ in the weak sense, with 
\[
\|g\|_{L^q(B_1)} \le \|f\|_{L^q(B_1)} + C \|u\|_{L^\infty_{2s-\eps}(\R^n)}.
\]
Apply now \eqref{eq:touse-x-div} to $u\eta$, to get
\begin{equation}
\label{eq:touse2-x}
[u]_{C^{\beta}(B_{1/2})}\le   \delta [u]_{C^{\beta}(B_4)} + C_\delta \left(\|u\|_{L^\infty_{2s-\eps}(\R^n)}+\|\nabla u\|_{L^\infty(B_4)}+\|f\|_{L^q(B_1)}\right)
\end{equation}
for any $u\in C^{\beta}(B_4)\cap L^\infty_{2s-\eps}(\R^n)$. By interpolation  we know that 
\[
\|\nabla u\|_{L^\infty(B_4)}\le \delta[u]_{C^\beta(B_4)} + C_\delta \|u\|_{L^\infty(B_4)}, 
\]
so that \eqref{eq:touse2-x} becomes
\begin{equation}
\label{eq:touse2-x2}
[u]_{C^{\beta}(B_{1/2})}\le   2\delta [u]_{C^{\beta}(B_4)} + C_\delta \left(\|u\|_{L^\infty_{2s-\eps}(\R^n)}+\|f\|_{L^q(B_1)}\right)
\end{equation}
for any $u\in C^{\beta}(B_4)\cap L^\infty_{2s-\eps}(\R^n)$.

Now, by a standard interpolation and covering-type argument, we deduce from \eqref{eq:touse2-x2} and for some $\delta$ small enough that
\[
\|u\|_{C^{\beta}(B_{1/2})} \le C \left(\|u\|_{L^\infty_{2s-\eps}(\R^n)}+ \|f\|_{L^q(B_1)}\right)
\]
for all $u\in C^{\beta}(B_1)\cap L^\infty_{2s-\eps}(\R^n)$ such that $\L(u, x) = f$ in $B_1$ in the weak sense, which proves Theorem \ref{thm-interior-linear-x-div}.  
\end{proof}

\section{Cordes-Nirenberg estimates in divergence form}
\label{sec5}

We now consider divergence form equations as before, but where the regularity of the operators is reduced to \emph{small oscillations}. Namely, we consider the conditions (cf.  \eqref{reg-x-div-CN}-\eqref{reg-x-div-even-CN})
\begin{equation}\label{reg-x-div-CN-m}
\int_{B_{2\rho}(x)\setminus B_\rho(x)} \big|K(x+h,h+dz)-K(x,dz)\big| \leq \eta \rho^{-2s}
\end{equation}
and
\begin{equation}\label{reg-x-div-even-CN-m}
\int_{B_{2\rho}\setminus B_\rho} \big|K(x,x+dy)-K(x,x-dy)\big|   \leq \eta\rho^{-2s}
\end{equation}
for all $x,h\in \R^n$, and $\rho > 0$, where the parameter $\eta$ will be small.

\begin{prop}
\label{prop:compactness-x-div-cn}
Let $s\in (0,1)$, $\delta > 0$, and $\beta\in (s, \min\{1, 2s\})$.
Let $\L$ be an operator of the form \eqref{divergence-form0}-\eqref{divergence-form1}, with kernels satisfying the ellipticity conditions \eqref{divergence-ellipticity0}-\eqref{divergence-ellipticity1}, and \eqref{reg-x-div-CN-m}-\eqref{reg-x-div-even-CN-m} for some $\eta\le\delta$.

Let $u\in C^{\beta}(\R^n)\cap L^\infty_{2s-\eps}(\R^n)$ for some $\eps>0$ and $f\in L^q_{\rm loc}(\R^n)$ for some $q \ge 1$, and with $[u]_{C^{\beta}(\R^n)}\le 1$ and $\|f\|_{L^q(B_{{1}/{\delta}})} \le \delta$.

Suppose also that $u$ satisfies
\[
\L(u,x) = f\quad\text{in}\quad B_{{1}/{\delta}}
\]
in the weak sense.

Then, for every $\eps_\circ > 0$ there exists $\delta_\circ >0 $ depending only on $\eps_\circ$, $n$, $s$, $\beta$, $q$, $\eps$, $\lambda$, and $\Lambda$,  such that if $\delta < \delta_\circ$,
\[
\|u-u(0)\|_{L^\infty(B_1)}\le \eps_\circ.
\]
\end{prop}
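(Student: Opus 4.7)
The plan is to argue by contradiction and compactness, paralleling Proposition~\ref{prop:compactness-x-div} but with only H\"older (not Lipschitz) control on $u$ and the simpler target of $L^\infty$-closeness. Suppose the statement fails: there exist $\eps_\circ > 0$, $\delta_k\to 0$, operators $\L^{(k)}$ (satisfying \eqref{divergence-ellipticity0}--\eqref{divergence-ellipticity1} and \eqref{reg-x-div-CN-m}--\eqref{reg-x-div-even-CN-m} with $\eta_k\le\delta_k$), right-hand sides $f_k$ with $\|f_k\|_{L^q(B_{1/\delta_k})}\le\delta_k$, and solutions $u_k$ with $[u_k]_{C^\beta(\R^n)}\le 1$ but $\|u_k - u_k(0)\|_{L^\infty(B_1)}\ge\eps_\circ$. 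Translating, $v_k := u_k - u_k(0)$ satisfies the same weak equation (since constants drop out of the divergence form), $v_k(0) = 0$, and $|v_k(x)|\le |x|^\beta$, so Arzel\`a--Ascoli yields a subsequence converging in $C^{\beta'}_{\rm loc}(\R^n)$ for every $\beta'<\beta$ to a limit $v$ with $v(0) = 0$, $[v]_{C^\beta(\R^n)}\le 1$, and $\|v\|_{L^\infty(B_1)}\ge\eps_\circ$.

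The core step is to show that $v$ weakly solves $\bar\L_0 v = 0$ on $\R^n$ for some translation-invariant $\bar\L_0 \in \GL$. I would introduce the frozen, symmetrized kernel $K_0^{(k)}(dy) := \tfrac12 K^{(k)}(0,dy) + \tfrac12 K^{(k)}(0,-dy)$ and, for any $\eta\in C_c^\infty(\R^n)$, rewrite the equation tested against $\eta$ in the form
\[
\int_{\R^n}\!\!\int_{\R^n} \bigl(v_k(x)-v_k(z)\bigr)\bigl(\eta(x)-\eta(z)\bigr)\,K_0^{(k)}(z-x)\,dz\,dx \;=\; 2\!\int f_k\eta \;+\; E_k,
\]
where $E_k$ is the error from replacing $K^{(k)}(x,dz)$ by $K_0^{(k)}(z-x)\,dz$. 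By the symmetry \eqref{divergence-form1}, as in \eqref{eq:absE0}, the error reduces to a single integral over $(\mathrm{supp}\,\eta)\times\R^n$. The key pointwise bound is the triangle-inequality splitting
\[
\bigl|K^{(k)}(x,x+dy)-K_0^{(k)}(dy)\bigr|\le\tfrac12\bigl|K^{(k)}(x,x+dy)-K^{(k)}(0,dy)\bigr|+\tfrac12\bigl|K^{(k)}(x,x-dy)-K^{(k)}(0,-dy)\bigr|+\tfrac12\bigl|K^{(k)}(x,x+dy)-K^{(k)}(x,x-dy)\bigr|,
\]
where the first two pieces are controlled by \eqref{reg-x-div-CN-m} (with $\bar h = x$ and base point $0$) and the third by \eqref{reg-x-div-even-CN-m}, producing the uniform bound $\int_{B_{2\rho}\setminus B_\rho}|K^{(k)}(x,x+dy) - K_0^{(k)}(dy)|\le C\eta_k\rho^{-2s}$. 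Coupling this with $|v_k(x)-v_k(z)|\,|\eta(x)-\eta(z)|\le C_\eta|x-z|^\beta\min\{1,|x-z|\}$ and summing the resulting dyadic series $\sum_{j\in\Z} 2^{j(\beta-2s)}\min\{1,2^j\}$ (which converges precisely because $\beta > s \ge 2s-1$ at small scales and $\beta < 2s$ at large scales) yields $|E_k| \le C_\eta\eta_k \to 0$, while $\int f_k\eta \to 0$ is immediate from the $L^q$ smallness of $f_k$ together with the compact support of $\eta$.

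To conclude, ellipticity makes $\min\{1,|y|^2\}K_0^{(k)}(dy)$ a tight family of Radon measures, so a further subsequence converges weakly to $\min\{1,|y|^2\}\bar K_0(dy)$ with $\bar\L_0\in\GL$; combined with the $C^{\beta'}_{\rm loc}$ convergence of $v_k$ and the tail bound $|v_k(z)|\le|z|^\beta$ with $\beta < 2s$, passing to the limit gives $\bar\L_0 v = 0$ weakly on $\R^n$. Since $v\in L^\infty_\beta(\R^n)$ and $\beta\in(s,\min\{1,2s\})\subset[0,2s)$, the Liouville theorem (Theorem~\ref{thm:Liouville}) forces $v(x) = a\cdot x + b$ with $b = 0$; then $[v]_{C^\beta(\R^n)}\le 1$ together with $\beta < 1$ forces $a = 0$, so $v\equiv 0$, contradicting $\|v\|_{L^\infty(B_1)}\ge\eps_\circ$. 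The main obstacle is the error estimate for $E_k$: only H\"older (not Lipschitz) regularity of $v_k$ is available, and closing the dyadic sum requires both smallness hypotheses \eqref{reg-x-div-CN-m} and \eqref{reg-x-div-even-CN-m} acting simultaneously through the three-term splitting --- the first two terms transport the $x$-dependence and the third restores evenness of the limiting kernel --- and it is this interplay that pins down $\beta > s$ as the natural sharp threshold.
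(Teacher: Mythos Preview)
Your proof is correct and follows the same overall scheme as the paper (contradiction, compactness, three--term splitting of the kernel difference, dyadic error estimate, Liouville). There is one structural difference worth flagging: the paper does \emph{not} pass to the limit directly in the equation for $v_k=u_k-u_k(0)$. Instead it fixes $h\in\R^n$, sets $V_k:=u_k(\cdot+h)-u_k(\cdot)$, and shows $\bar\L_0 V=0$ for the (globally bounded) limit $V$; Liouville then makes $V$ constant, and since this holds for every $h$, Lemma~\ref{it:H10} forces $v$ to be affine, hence zero. Your approach skips the increment step and applies Liouville directly to $v$, exploiting $\beta<1$. This is a genuine simplification in this regime (it avoids both the parameter $h$ and Lemma~\ref{it:H10}); the paper's route has the advantage that the limiting function is globally bounded, which makes the passage to the limit in the bilinear form more elementary, and it mirrors the $C^{1+\alpha}$ argument of Proposition~\ref{prop:compactness-x-div} where increments are unavoidable.

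One small correction: the convergence of your dyadic series $\sum_j 2^{j(\beta-2s)}\min\{1,2^j\}$ only requires $\beta>2s-1$ at small scales, not $\beta>s$. The hypothesis $\beta>s$ enters elsewhere --- it is what guarantees that $v\in C^\beta(\R^n)$ has locally finite energy, so that the weak formulation $\bar\L_0 v=0$ is meaningful in the limit (the paper makes this explicit). Your proof is still correct since $s>2s-1$, but the closing remark misattributes where the threshold $\beta>s$ is actually used.
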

\begin{proof}
  The proof follows the lines of the proof of Proposition~\ref{prop:compactness-x-div}. We also assume here that $
K(x, dz) = K(x, z)\ dz
$. We divide it into three steps:
\begin{steps}
\item Assume that the statement does not hold. 
Then, there exists some $\eps_\circ > 0$ such that for any $k\in \N$, there are $u_k\in C_{\rm loc}^{\beta}(\R^n)\cap L^\infty_{2s-\eps}(\R^n)$ with $[u_k]_{C^{\beta}(\R^n)}\le 1$, $f_k\in L^q(B_k)$ with $\|f_k\|_{L^q(B_k)}\le \frac{1}{k}$, and $\L^{(k)}$ as in the statement such that
\[
\L^{(k)} (u_k, x) = f_k \quad\text{in}\quad B_k,
\]
in the weak sense, with 
\begin{equation}
\label{eq:byassumptdiv-cn}
\begin{split} 
& \int_{B_{2\rho}(x)\setminus B_\rho(x)} \big|K^{(k)}(x+  \bar h,z+  \bar h)-K^{(k)}(x,z)\big|dz \leq \frac{1}{k}  \rho^{-2s}\\
& \int_{B_{2\rho}\setminus B_\rho} \big|K^{(k)}(x,x+y)-K^{(k)}(x, x-y)\big|dy \leq \frac{1}{k} \rho^{ -2s},
\end{split}
\end{equation}
for all $x, \bar h\in \R^n$, and $\rho > 0$,  but 
\[
\|u_k - u_k(0)\|_{L^\infty(B_1)}\ge \eps_\circ.
\]

Let us define, for a fixed $h\in \R^n$,
\[
 V_k := u_k(x+h)-u_k(x), \quad \ F_k := f_k(x+h)-f_k(x),
\]
with $\|F_k\|_{L^q(B_k)} \le \frac{4}{k}$, $\|V_k\|_{C^{\beta}(\R^n)} \leq C_h$.

\item Let $K_0^{(k)}(y)$ be the kernel denoting the even part of $K(x, x+y)$ at $x = 0$, i.e., $K_0^{(k)}(y) := \frac12K^{(k)}(0,y)+\frac12K^{(k)}(0,-y)$.

For any $\eta\in C^\infty_c(\R^n)$ we have (for $k$ such that ${\rm supp}\,\eta\subset B_k$)
\[\int_{\R^n} \int_{\R^n} \hspace{-2mm} \big(V_k(x)-V_k(z)\big)\big(\eta(x)-\eta(z)\big) K^{(k)}_0(z-x)dz\, dx = 2\int_{\R^n}\hspace{-2mm}  F_k\eta+ E_0-E_h,\]
where
\[E_0:= \int_{\R^n} \int_{\R^n}  E_0(x, z)\, dz\,dx,\]
with 
\[
E_0(x, z) := \big(u_k(x)-u_k(z)\big)\big(\eta(x)-\eta(z)\big) \big(K^{(k)}(x,z) - K^{(k)}_0(z-x)\big),
\]
and the expression  for $E_{  h}$ is analogous, replacing $u_k(x)$, $u_k(z)$, and $K^{(k)}(x,z)$ by $u_k(x+ h)$, $u_k(z+ h)$,  and $K^{(k)}(x+ h,z+ h)$ respectively. By symmetry in the roles of $x$ and $z$,
\begin{equation}
\label{eq:absE0-cn}
\begin{split}
|E_0|& \le 2\int_{{\rm supp}\, \eta}\int_{\R^n} |E_0
(x, z)|\, dz\, dx.
\end{split}
\end{equation}

Since  $u_k$ are globally $C^\beta$ and  $\eta\in C^1(\R^n)$ we have
\[
\begin{split} \big|u_k(x)-u_k(z)\big|\,\big|\eta(x)-\eta(z)\big| & \leq  C|x-z|^\beta\min\{1, |x-z|\},
\end{split}\]
for some constant depending only on $\eta$. Hence, from \eqref{eq:absE0-cn} we have
\begin{equation}
\label{eq:E0bound-cn}
|E_0|\le C \sum_{\substack{\rho = 2^j\\ j \in \Z}}  \rho^\beta \min\{1, \rho\}\int_{{\rm supp}\, \eta}I_\rho(x) \, dx.
\end{equation}
where, as in the proof of Proposition~\ref{prop:compactness-x-div} (using the inequalities from \eqref{eq:byassumptdiv-cn}),
\[
\begin{split}
I_\rho(x) & :=  \int_{B_{2\rho}\setminus B_\rho}
\left| K^{(k)}(x, x+y)-K_0^{(k)}(y)\right|\, dy\le \frac{1}{k} \rho^{-2s}.
\end{split}
\]

Since $\eta$ has a compact support, putting it back into  \eqref{eq:E0bound-cn}, we get
\[
 |E_0|\le \frac{C}{k } \sum_{\substack{\rho = 2^j\\ j \in \Z}}  \rho^{\beta-2s} \min\{1, \rho\} \le \frac{C}{k } \left( \sum_{\substack{\rho = 2^j\\ j \in \N\cup\{0\}}}  \rho^{\beta-2s}  +\sum_{\substack{\rho = 2^{-j}\\ j \in \N}}  \rho^{1+\beta-2s} \right)  \le \frac{C}{k},
\]
since  $\beta < 2s$ and $1+\beta > 2s$.

\item We have proved $|E_0|\le \frac{C}{k}$, and the same   holds for $E_{h}$. Together with the fact that  $\|F_k\|_{L^q(B_k)} \le \frac{4}{k}$, we get 
\[\int_{\R^n} \int_{\R^n} \big(V_k(x)-V_k(z)\big)\big(\eta(x)-\eta(z)\big) K^{(k)}_0(z-x)dz\,dx \longrightarrow 0 \quad \textrm{as}\ k\to\infty.\]
By  Arzel\`a-Ascoli, the functions $V_k$ converge (up to a subsequence) locally uniformly to a function $V\in C^{\beta}(\R^n)$. 
On the other hand,    $\min\{1,|y|^2\}K^{(k)}_0(dy)$ converge weakly to some $\min\{1,|y|^2\}\bar K_0(dy)$ satisfying
\[\int_{\R^n} \int_{\R^n} \big(V(x)-V(z)\big)\big(\eta(x)-\eta(z)\big) \bar K_0(z-x)dz\, dx= 0.\]
Since $V\in C^{\beta}(\R^n)$, and $\beta > s$, it has finite energy  on compact sets, and we have that $V$ solves $\bar \L_0 V=0$ in $\R^n$ in the weak sense (where $\bar \L_0\in \GL$ is the operator with kernel $\bar K_0$), and by Liouville's theorem, Theorem~\ref{thm:Liouville}, we get that $V$ is constant.
Denoting
\[
v_k := u_k - u_k(0),
\]
then, $v_k(0)   = 0$ with $[v_k]_{C^{\beta}(\R^n)}\le 1$ and $v_k\to v$ locally uniformly, for some $v$ with $v(0) =   0$ and $[v]_{C^{\beta}(\R^n)}\le 1$, and  $V(x)=v(x+h)-v(x)$, which is constant (for every $h\in \R^n$ fixed). By Lemma~\ref{it:H10}  $v$ is an affine function, and with $[v]_{C^{\beta}(\R^n)}\le 1$ and $v(0) = 0$  it must be $v \equiv 0$, which is a contradiction with $\|v\|_{L^\infty(B_1)}\ge \eps_\circ>0$. 
\qedhere
\end{steps} 
\end{proof}

We next have the following result, which is analogous to  Proposition~\ref{prop:interior-linear-x-div}.

\begin{prop}
\label{prop:interior-linear-x-div-cn}
Let $s\in (0,1)$   $\beta\in (s,\min\{1, 2s\})$, and let $q = \frac{n}{2s-\beta}$. Let $\L$ be an operator of the form \eqref{divergence-form0}-\eqref{divergence-form1}, with kernels satisfying \eqref{divergence-ellipticity0}-\eqref{divergence-ellipticity1}.  Then, the following holds. 

For any $\delta > 0$ there exists $C_\delta$ and $\eta_\delta > 0$ such that
\[
[u]_{C^{\beta}(B_{1/2})} \le \delta [u]_{C^{\beta}(\R^n)} + C_\delta \left(\|u\|_{L^\infty(B_1)} +   \|f\|_{L^q(B_1)}\right)
\]
for any $u \in C^{\beta}_c(\R^n)$ satisfying $\L(u,x)=f$ in $B_1$ in the weak sense,  where $\L$ satisfies \eqref{reg-x-div-CN-m}-\eqref{reg-x-div-even-CN-m} with $\eta= \eta_\delta$
The constant $C_\delta$ depends only on $\delta$, $n$, $s$, $\beta$,      $\lambda$, and $\Lambda$.
\end{prop}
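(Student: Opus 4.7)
The plan is to mirror the blow-up and compactness strategy of Proposition~\ref{prop:interior-linear-x-div}, using Lemma~\ref{lem-interior-blowup} with $\mu = \beta$ and replacing the Lipschitz-based compactness step by the new Liouville-type Proposition~\ref{prop:compactness-x-div-cn}. Since $\beta < 1$, the floor $\nu = \lfloor\beta\rfloor = 0$, so the obstruction furnished by the lemma reduces to control of $\|v_k - v_k(0)\|_{L^\infty(B_1)}$, which matches precisely the conclusion of the compactness proposition. The parameter $\eta_\delta$ will be chosen at the start as the threshold $\delta_\circ$ produced by Proposition~\ref{prop:compactness-x-div-cn} with $\eps_\circ := \delta/4$.

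First I would define, for $w\in C^\beta_c(\R^n)$,
\[
\mathcal{S}(w) := \inf\bigl\{\|g\|_{L^q(B_1)} : \tilde\L(w,x)=g \text{ in } B_1 \text{ weakly}\bigr\},
\]
the infimum taken over $\tilde\L$ of the form \eqref{divergence-form0}-\eqref{divergence-form1} satisfying the ellipticity \eqref{divergence-ellipticity0}-\eqref{divergence-ellipticity1} and the smallness conditions \eqref{reg-x-div-CN-m}-\eqref{reg-x-div-even-CN-m} with $\eta = \eta_\delta$, extended to $+\infty$ otherwise. Applying Lemma~\ref{lem-interior-blowup}, either the desired estimate holds, or we obtain a sequence $u_k\in C^\beta_c(\R^n)$, operators $\L_k$ admissible for this $\mathcal S$, and $f_k := \L_k(u_k,\cdot)$ with $\|f_k\|_{L^q(B_1)}/[u_k]_{C^\beta(B_{1/2})}\to 0$, together with $x_k\in B_{1/2}$ and $r_k\downarrow 0$ such that the rescalings
\[
v_k(x) := \frac{u_k(x_k+r_k x)}{r_k^\beta [u_k]_{C^\beta(\R^n)}}
\]
satisfy $\|v_k - v_k(0)\|_{L^\infty(B_1)} > \delta/2$, while $[v_k]_{C^\beta(\R^n)}=1$ and $v_k$ has compact support (so $v_k\in L^\infty_{2s-\eps}(\R^n)$).

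The crucial step is that the rescaled kernels $\tilde K^k(x,dz) := r_k^{2s} K^k(x_k+r_k x,\,x_k+r_k\,dz)$ define operators $\tilde\L_k$ that inherit the ellipticity and, because \eqref{reg-x-div-CN-m}-\eqref{reg-x-div-even-CN-m} are scale invariant, they satisfy those same smallness conditions \emph{with the same constant $\eta_\delta$}. Moreover, $\tilde\L_k(v_k,\cdot) = \tilde f_k$ in $B_{1/(2r_k)}$ weakly, and since $q = n/(2s-\beta)$,
\[
\|\tilde f_k\|_{L^q(B_{1/(2r_k)})} = \frac{\|f_k\|_{L^q(B_1)}}{[u_k]_{C^\beta(\R^n)}} \longrightarrow 0.
\]
For $k$ large enough $B_{1/(2r_k)} \supset B_{1/\delta_\circ}$ and $\|\tilde f_k\|_{L^q(B_{1/\delta_\circ})} \le \delta_\circ = \eta_\delta$, so Proposition~\ref{prop:compactness-x-div-cn} applies and yields $\|v_k - v_k(0)\|_{L^\infty(B_1)} \le \eps_\circ = \delta/4$, contradicting the lower bound $\delta/2$ above. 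The main obstacle is precisely to verify that $\eta_\delta$ does not need to be adjusted through the rescaling: this is exactly the scale invariance of the two smallness conditions, which makes the fixed a priori choice $\eta_\delta := \delta_\circ(\delta/4)$ work uniformly for the whole blow-up sequence.
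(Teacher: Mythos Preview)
Your proposal is correct and follows essentially the same route as the paper: both set up $\mathcal S$ via the $L^q$ norm of the right-hand side, invoke Lemma~\ref{lem-interior-blowup} with $\mu=\beta$ (so $\nu=0$), rescale to obtain $\tilde\L_k$ satisfying the same scale-invariant smallness conditions \eqref{reg-x-div-CN-m}--\eqref{reg-x-div-even-CN-m} with the unchanged constant $\eta_\delta$, and then feed $v_k$ into Proposition~\ref{prop:compactness-x-div-cn} to contradict the lower bound on $\|v_k-v_k(0)\|_{L^\infty(B_1)}$. Your write-up is in fact slightly cleaner in making the choice $\eta_\delta:=\delta_\circ(\delta/4)$ explicit upfront.
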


\begin{proof}
The proof is similar to the proof of Proposition~\ref{prop:interior-linear-x-div}.
Indeed, let us denote,  for $w\in C_c^{\beta}(\R^n)$,
\[
\tilde{\mathcal{S}} (w) := \inf\left\{\|g\|_{L^q(B_1)} :  
\begin{array}{l}
\tilde \L(w,x)=g \ \text{in the weak sense, for some $\tilde\L$ of}\\
\text{the form \eqref{divergence-form0}-\eqref{divergence-form1}-\eqref{divergence-ellipticity0}-\eqref{divergence-ellipticity1}}\\\text{and satisfying \eqref{reg-x-div}-\eqref{reg-x-div-even}, with $M>0$.}
\end{array}\right\}.
\]

By Lemma~\ref{lem-interior-blowup} with $\mu = \beta$ and
\[
\mathcal{S}(w) =  \left\{
\begin{array}{ll} \displaystyle
\tilde{\mathcal{S}} (w)  & \text{if}\quad w\in  C_c^{\beta}(\R^n).
\\
\infty & \text{otherwise,}
\end{array}
\right.
\]
Notice that the mapping $\mathcal{S}:C^{\beta}(\R^n)\to \R_{\ge 0}$ depends only on $n$, $s$, $\beta$,    $M$, $\lambda$, and~$\Lambda$.

Thus, either Lemma~\ref{lem-interior-blowup}~\ref{it:lem_int_blowup_i} holds, in which case we would have
\[
[u]_{C^{\beta}(B_{1/2})} \le \delta [u]_{C^{\beta}(\R^n)} + C_\delta \left(\|u\|_{L^\infty(B_1)} + \|f\|_{L^q(B_1)}\right),
\]
or there exists a sequence $u_k\in C^{\beta}_c(\R^n)$ and $\L_{k}$ of the previous form such that $\L_k(u_k,x)=f_k$ in the weak sense,
\begin{equation}
\label{eq:fktozero-x-div-cn}
\frac{\|f_k\|_{L^q(B_1)} }{[u_k]_{C^{\beta}(B_{1/2})}} \le \frac{2\mathcal{S}(u_k)}{[u_k]_{C^{\beta}(B_{1/2})}}\to 0,
\end{equation}
and for some $x_k \in B_{1/2}$ and $r_k \downarrow 0$,
\[
v_k(x) := \frac{u_k(x_k+r_k x)}{r_k^{\beta}[u_k]_{C^{\beta}(\R^n)}}
\]
satisfies 
\begin{equation}
\label{eq:vkcontradiction-x-div-cn}
\|v_k - v_k(0)\|_{L^\infty(B_1)} > \frac{\delta}{2}.
\end{equation}
Then, by scaling, there exists an operator of the form \eqref{divergence-form0}-\eqref{divergence-form1}-\eqref{divergence-ellipticity0}-\eqref{divergence-ellipticity1}, $\tilde \L_k$, such that 
\[
\tilde \L_k(v_k,x) = r_k^{2s-\beta}\frac{f_k(x_k+r_k x)}{ [u_k]_{C^{\beta}(\R^n)}} =:\tilde f_k(x)
\]
in the weak sense.
More precisely, if $\L_k$ has kernel $K^k(x,z)$, then $\tilde \L_k$ has kernel $\tilde K^k(x,dz)$ given by 
\[
\tilde K^k (x,dz) := r_k^{2s} K^k (x_k+r_kx,x_k+r_k \, dz).
\]
Moreover, since $\L_k$ satisfies \eqref{reg-x-div-CN-m}-\eqref{reg-x-div-even-CN-m}, $\tilde\L_k$ also satisfies them with the same constant~$\eta$. 

Also from \eqref{eq:fktozero-x-div} and since $q = \frac{n}{2s-\beta}$,
\[
\|\tilde f_k\|_{L^q (B_{1 /(2r_k)})} 
= \frac{\|f_k(x_k+r_k\,\cdot\,)\|_{L^q(B_{1 /(2r_k)})}}{r_k^{\beta-2s} [u_k]_{C^{\beta}(\R^n)}}
\leq  r_k^{2s-\beta-\frac{n}{q}}\frac{\|f_k\|_{L^q(B_1)}}{[u_k]_{C^{\beta}(\R^n)}}
\to 0, 
\] 
as $k\to \infty$. 
In all, since by definition $[v_k]_{C^\beta(\R^n)} = 1$,  we have that $v_k$ satisfies all the hypotheses of Proposition~\ref{prop:compactness-x-div-cn} for any fixed $\delta_\circ>0$, if $k$ is large enough and $\eta_\delta$ is small enough, depending on $\delta$. 
In particular, taking $\eps_\circ$ sufficiently small in Proposition~\ref{prop:compactness-x-div-cn} we get a contradiction with \eqref{eq:vkcontradiction-x-div-cn}.
\end{proof}

As a consequence, we have our desired result:
\begin{proof}[Proof of Theorem~\ref{thmD}]
It follows exactly as the proof of Theorem~\ref{thm-interior-linear-x-div} but using Proposition~\ref{prop:interior-linear-x-div-cn} instead of Proposition~\ref{prop:interior-linear-x-div}. 
\end{proof}

\section{Cordes-Nirenberg estimate in non-divergence form}
\label{sec6}

We consider now operators with $x$-dependence in non-divergence form, \eqref{x-dependence-L-nu}, but with bounded measurable coefficients that are ``close to constant''. 
Namely, instead of \eqref{x-dependence-L2}, we assume that for some small $\eta > 0$, 
\begin{equation}\label{x-dependence-L2-eta}
\sup_{\begin{subarray}{c} [\phi]_{C^\beta(\R^n)}\leq 1 \\ \phi(0)=0 \end{subarray}} 
\left|\int_{B_{2\rho}\setminus B_\rho} \phi\, d(K_x - K_{x'})\right| \leq \eta\rho^{\beta-2s},
\end{equation}
for all $x, x'\in \R^n$ and all $\rho > 0$.

Our result now reads as follows:  

\begin{thm}\label{thm-interior-linear-x-linfty}
Let $s\in (0,1)$, $\beta\in(0,2s)$, and let $\L$ be an operator of the form \eqref{x-dependence-L-nu} with $\L_x\in \GL$ for all $x\in \R^n$. 

Let $f\in L^\infty(B_1)$, and let $u\in C_{\rm loc}^{2s+}(B_1) \cap L^\infty_{2s-\eps}(\R^n)$ for some $\eps>0$ be any solution of \eqref{linear}.

Then, there exists $\eta$ small enough (depending only on $\beta$, $n$, $s$, $\lambda$, and $\Lambda$) such that if $\L$ satisfies \eqref{x-dependence-L2-eta}, then 
\[
\|u\|_{C^\beta(B_{1/2})} \le C\left(\|u\|_{L^\infty_{2s-\eps}(\R^n)} + \|f\|_{L^\infty(B_1)}\right)
\]
for some $C$ depending only on  $n$, $s$, $\beta$, $\eps$, $\lambda$, and $\Lambda$. 
\end{thm}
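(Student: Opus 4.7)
The proof will follow the contradiction-compactness scheme \`a la Simon used for the divergence-form Cordes-Nirenberg result in Theorem~\ref{thmD}. The main step is the following Liouville-type compactness lemma: for every $\eps_\circ>0$ there is $\delta_\circ=\delta_\circ(n,s,\beta,\lambda,\Lambda,\eps,\eps_\circ)>0$ such that if $u\in C^{2s+}_{\rm loc}(B_{1/\delta})\cap L^\infty_{2s-\eps}(\R^n)$ solves $\L(u,x)=f$ in $B_{1/\delta}$ with $[u]_{C^\beta(\R^n)}\le 1$, $\|f\|_{L^\infty(B_{1/\delta})}\le\delta$, and $\L$ satisfies \eqref{x-dependence-L2-eta} with parameter $\eta\le\delta\le\delta_\circ$, then $\|u-P\|_{C^{\lfloor\beta\rfloor}(B_1)}\le\eps_\circ$, where $P$ is the $\lfloor\beta\rfloor$-th order Taylor polynomial of $u$ at the origin.

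I will prove the lemma by contradiction. Given sequences $u_k,\L^{(k)},f_k$ with $\delta_k\to 0$ violating the conclusion, Arzel\`a-Ascoli (with interpolation to control $\nabla u_k$ when $\beta\ge 1$) produces a limit $u_k\to u_\infty$ in $C^{\lfloor\beta\rfloor}_{\rm loc}(\R^n)$ with $[u_\infty]_{C^\beta(\R^n)}\le 1$, while the ellipticity \eqref{eq:Kellipt_gen_A} yields (along a subsequence) weak convergence $\min\{1,|y|^2\}K^{(k)}_0(dy)\rightharpoonup\min\{1,|y|^2\}\bar K_0(dy)$ defining some $\bar\L_0\in\GL$. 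The crucial step is that $u_\infty$ is a viscosity solution of $\bar\L_0 u_\infty=0$ in $\R^n$: splitting $f_k=\L^{(k)}_0 u_k+(\L^{(k)}_x-\L^{(k)}_0)u_k$ and invoking viscosity stability for nonlocal operators, the first term converges to $\bar\L_0 u_\infty$ while the second vanishes thanks to \eqref{x-dependence-L2-eta} with $\eta_k\to 0$, applied annulus-by-annulus to the even second-order difference $\phi(y)=2u_k(x)-u_k(x+y)-u_k(x-y)$ (which satisfies $\phi(0)=0$), and combining the Kantorovich-type bound $\eta_k\rho^{\beta-2s}$ on large annuli with the pointwise $C^{2s+}$-enabled bound on small ones. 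Theorem~\ref{thm:Liouville} then forces $u_\infty$ affine, and since $[u_\infty]_{C^\beta(\R^n)}\le 1$ with $\beta<2s\le 2$, $u_\infty$ coincides with its Taylor polynomial of order $\lfloor\beta\rfloor$, contradicting the assumed failure.

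With the compactness lemma in hand, the proof is completed by invoking Lemma~\ref{lem-interior-blowup} with $\mu=\beta$ and a suitable $\mathcal{S}$ incorporating $\|f\|_{L^\infty(B_1)}+\|u\|_{L^\infty_{2s-\eps}(\R^n)}$ (conditional on the existence of an operator of the required form satisfying \eqref{x-dependence-L2-eta} with $\eta\le\eta_\delta$). Either alternative (i) gives directly
\[
[u]_{C^\beta(B_{1/2})}\le\delta[u]_{C^\beta(\R^n)}+C_\delta\bigl(\|u\|_{L^\infty(B_1)}+\|f\|_{L^\infty(B_1)}+\|u\|_{L^\infty_{2s-\eps}(\R^n)}\bigr),
\]
from which a cutoff-interpolation argument identical to the end of the proof of Theorem~\ref{thm-interior-linear-x-div} concludes; or alternative (ii) produces a blow-up sequence $v_k$ which, thanks to the scale-invariance of \eqref{x-dependence-L2-eta} under $K^r_x(dy)=r^{2s}K_{rx}(r\,dy)$ (a direct change-of-variables check, using that $\phi\mapsto r^\beta\phi(\cdot/r)$ preserves $[\,\cdot\,]_{C^\beta}\le 1$), satisfies the hypotheses of the compactness lemma with parameters tending to zero and must converge to its Taylor polynomial, contradicting $\|v_k-p_k\|_{C^{\lfloor\beta\rfloor}(B_1)}>\delta/2$. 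The main obstacle is the stability step: unlike the divergence-form Proposition~\ref{prop:compactness-x-div-cn}, where the weak form decouples the $x$-dependence through a single integration by parts, here one must pass $(\L^{(k)}_x-\L^{(k)}_0)u_k\to 0$ in a sense strong enough to feed into viscosity stability; the Kantorovich-type weight $\eta\rho^{\beta-2s}$ in \eqref{x-dependence-L2-eta} is precisely what allows the annulus-by-annulus estimate to close up with the available $C^{2s+}$ regularity of $u_k$ and to be uniform under the blow-up rescaling.
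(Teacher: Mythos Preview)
Your overall scaffolding---a Liouville-type compactness lemma proved by contradiction, then Lemma~\ref{lem-interior-blowup}, then a cutoff/covering to conclude---is exactly the paper's route (Propositions~\ref{eq:Liouv_nondiv_infty} and~\ref{prop:interior-linear-x-nondiv}). The gap is in the stability step inside the compactness lemma.

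You propose to pass to the limit by writing $\L^{(k)}_0 u_k = f_k - (\L^{(k)}_x-\L^{(k)}_0)u_k$ and showing the second term tends to zero pointwise via \eqref{x-dependence-L2-eta} applied annulus-by-annulus to $\phi(y)=2u_k(x)-u_k(x+y)-u_k(x-y)$. On large annuli this works, since $[\phi]_{C^\beta}\le C[u_k]_{C^\beta(\R^n)}\le C$ and the weight $\eta_k\rho^{\beta-2s}$ is summable. But on small annuli your ``$C^{2s+}$-enabled bound'' requires a \emph{quantitative} estimate on $\|u_k\|_{C^{2s+\alpha}}$ near $x$: concretely, Lemma~\ref{lem-sdhhh} gives
\[
\bigl|(\L^{(k)}_x-\L^{(k)}_0)u_k(x)\bigr|\le C\eta_k\bigl(\|u_k\|_{C^{2s+\alpha}(B_1(x))}+[u_k]_{C^\beta(\R^n)}\bigr),
\]
and you have \emph{no uniform control} on the first term---only the qualitative hypothesis $u_k\in C^{2s+}_{\rm loc}$, and only $[u_k]_{C^\beta(\R^n)}\le 1$. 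The product $\eta_k\cdot\|u_k\|_{C^{2s+\alpha}}$ need not vanish, so the splitting does not feed into a black-box viscosity stability result.

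The paper avoids this by never evaluating the operator difference on $u_k$. Instead it runs viscosity stability by hand: given a $C^2$ test function $\phi$ touching the limit $w$ from below at $\bar x$, it builds touching functions $\phi_k$ for $w_k$ that equal $\phi+c_k$ in a small ball and coincide with $w_k$ outside a larger ball, arranged so that $[\phi_k]_{C^\beta(\R^n)}\le C(r)$ uniformly (see \eqref{oiurgoeiru}). Lemma~\ref{lem-sdhhh} is then applied to $\phi_k$, whose local $C^2$ and global $C^\beta$ norms are bounded \emph{by construction}, giving
\[
\bigl|(\L^{(k)}_0-\L^{(k)}_{\bar x_k})\phi_k(\bar x_k)\bigr|\le C(r)\,\eta_k\to 0,
\]
cf.\ \eqref{claim=agost}. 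The point is that viscosity test functions always carry the needed higher regularity uniformly, whereas the solutions $u_k$ do not; your decomposition forces the Kantorovich estimate onto $u_k$, where it cannot close. Once you reroute the argument through test functions, the rest of your outline goes through as written.
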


In order to prove Theorem \ref{thm-interior-linear-x-linfty}, we will first show the following Liouville-type theorem:  

\begin{prop}
\label{eq:Liouv_nondiv_infty}
Let $s\in (0,1)$, $\beta\in(0,2s)$ and $\beta\neq 1$, and $\delta > 0$.
 Let $\L$ be an operator of the form \eqref{x-dependence-L-nu} with $\L_x\in \GL$ for all $x\in \R^n$ satisfying \eqref{x-dependence-L2-eta} for some   $\eta\le \delta$.
 
Let $u\in C^{2s+}(\R^n)\cap L^\infty(\R^n)$ and $f\in L^\infty_{\rm loc}(\R^n)$ with $[u]_{C^{\beta}(\R^n)}\le 1$ and $\|f\|_{L^\infty(B_{{1}/{\delta}})} \le \delta$, be such that 
\[
\L(u,x) = f\quad\text{in}\quad B_{{1}/{\delta}}.
\]

Then, for every $\eps_\circ > 0$ there exists $\delta_\circ >0 $ depending only on $\eps_\circ$, $\beta$, $n$, $s$,  $\lambda$, and $\Lambda$,  such that if $\delta < \delta_\circ$,
\[
\|u-\ell\|_{C^{\lfloor \beta\rfloor}(B_1)}\le \eps_\circ,
\]
where $\ell(x)$ is the $\lfloor \beta\rfloor$-th order expansion of $u$ at $0$.
\end{prop}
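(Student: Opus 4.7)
The plan is to argue by contradiction and compactness, paralleling the blueprint of Proposition~\ref{prop:compactness-x-div} but in the non-divergence setting. I assume the statement fails and obtain sequences $\delta_k \downarrow 0$, operators $\L^{(k)}$ of the form \eqref{x-dependence-L-nu} satisfying \eqref{x-dependence-L2-eta} with $\eta \le \delta_k$, right-hand sides $f_k$ with $\|f_k\|_{L^\infty(B_{1/\delta_k})} \le \delta_k$, and classical $C^{2s+}$ solutions $u_k$ with $[u_k]_{C^\beta(\R^n)} \le 1$ yet $\|u_k - \ell_k\|_{C^{\lfloor \beta \rfloor}(B_1)} > \eps_\circ$, where $\ell_k$ is the $\lfloor\beta\rfloor$-th order Taylor polynomial of $u_k$ at the origin. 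Setting $v_k := u_k - \ell_k$, the symmetry of each $K^{(k)}_x$ makes $\L^{(k)}$ annihilate every polynomial of degree at most $1$, so (using $\beta \neq 1$ so $\lfloor\beta\rfloor\le 1$) we have $\L^{(k)}(v_k, \cdot) = f_k$ in $B_{1/\delta_k}$, with $v_k(0) = 0$, $\nabla v_k(0) = 0$ when $\beta > 1$, $[v_k]_{C^\beta(\R^n)} \le 1$, and the pointwise bound $|v_k(x)| \le |x|^\beta$.

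Next I extract compactness. The Arzel\`a--Ascoli theorem (from $[v_k]_{C^\beta(\R^n)}\le 1$ and the normalization at $0$) yields a subsequence with $v_k \to v_\infty$ in $C^{\lfloor\beta\rfloor}_{\rm loc}(\R^n)$, where $v_\infty$ inherits the same normalizations and growth. Simultaneously, the upper ellipticity \eqref{eq:Kellipt_gen_L} and a diagonal extraction produce a further subsequence along which $\min\{1,|y|^2\}K^{(k)}_0(dy)$ converges weakly to some $\min\{1,|y|^2\}\bar K(dy)$; the lower bound \eqref{eq:Kellipt_gen_l} passes to the limit, so $\bar \L \in \GL$. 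The essential consequence of \eqref{x-dependence-L2-eta} with $\eta_k \to 0$ is that the whole family $\{K^{(k)}_x\}_{x\in\R^n}$ collapses onto the same translation-invariant limit $\bar K$ in the weighted Kantorovich sense, uniformly in $x$.

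The main obstacle, and the heart of the argument, is to show that $v_\infty$ is a viscosity solution of $\bar\L v_\infty = 0$ in $\R^n$. I would use standard stability of viscosity solutions for integro-differential equations: given a smooth $\varphi$ touching $v_\infty$ strictly from above at a point $x_0$, locally uniform convergence furnishes contact points $x_k \to x_0$ at which $\L^{(k)}(\varphi_k, x_k) \le f_k(x_k) \to 0$, where $\varphi_k$ is the global function equal to $\varphi$ near $x_k$ and to $v_k$ elsewhere. I split $\L^{(k)}(\varphi_k, x_k) = \L^{(k)}_0\varphi_k(x_k) + \bigl(\L^{(k)}_{x_k} - \L^{(k)}_0\bigr)\varphi_k(x_k)$. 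The frozen piece passes to $\bar\L\bar\varphi(x_0)$ by the weak convergence of $K^{(k)}_0$, together with the uniform tail control $|v_k(x)| \le |x|^\beta$ with $\beta<2s$ that dominates the integrand at infinity. For the error piece, I symmetrize into the even second difference $\Psi(y) = 2\varphi_k(x_k) - \varphi_k(x_k+y) - \varphi_k(x_k-y)$, which satisfies $|\Psi(y)| \lesssim \min\{|y|^2,|y|^\beta\}$ and similar bounds on its local $C^\beta$ seminorm; plugging a dyadic cutoff of $\Psi$ into \eqref{x-dependence-L2-eta} and summing over annuli yields a bound of order $\eta_k \sum_\rho \rho^{\min(2,\beta) - 2s} = O(\eta_k) \to 0$, where the dyadic sum converges precisely because $\beta < 2s$ and $2 > 2s$. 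The supersolution inequality is symmetric.

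Once $\bar\L v_\infty = 0$ is established in $\R^n$ with $|v_\infty(x)|\le|x|^\beta$ and $\beta \in (0,2s)\setminus\{1\}$, Theorem~\ref{thm:Liouville} forces $v_\infty(x) = a\cdot x + b$; the normalizations $v_\infty(0)=0$, together with either $\nabla v_\infty(0)=0$ when $\beta>1$ or the sublinearity $|v_\infty(x)|\le|x|^\beta$ when $\beta<1$, imply $a=b=0$, hence $v_\infty \equiv 0$. Locally uniform convergence in $C^{\lfloor\beta\rfloor}$ then gives $\|v_k\|_{C^{\lfloor\beta\rfloor}(B_1)} \to 0$, contradicting the standing assumption $\|u_k - \ell_k\|_{C^{\lfloor\beta\rfloor}(B_1)} > \eps_\circ$ and closing the argument.
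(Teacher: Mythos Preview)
Your proposal is correct and follows essentially the same contradiction--compactness scheme as the paper: both extract a limit $v_\infty$ of the normalized solutions, pass the frozen-coefficient operators $\L^{(k)}_0$ to a limit $\bar\L\in\GL$, verify $\bar\L v_\infty=0$ in the viscosity sense by splitting $\L^{(k)}_{x_k}\varphi_k(x_k)$ into the frozen piece plus $(\L^{(k)}_{x_k}-\L^{(k)}_0)\varphi_k(x_k)$, and conclude via Theorem~\ref{thm:Liouville}. The only differences are cosmetic: the paper packages your dyadic error estimate as an invocation of \eqref{eq:Linfty_nondivform} in Lemma~\ref{lem-sdhhh}, and is explicit (see \eqref{oiurgoeiru}) about interpolating the test function on the transition annulus so that $[\phi_k]_{C^\beta(\R^n)}\le C(r)$ globally---a point you should make precise, since your ``equal to $\varphi$ near $x_k$ and to $v_k$ elsewhere'' is generally discontinuous at the interface and condition \eqref{x-dependence-L2-eta} requires a genuine $C^\beta$ test function on each annulus.
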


\begin{proof}
We divide the proof into three steps:
\begin{steps}
\item   Assume that the statement does not hold. 
That is, there exists some $\eps_\circ > 0$ such that for any $k\in \N$, there are $u_k\in C_{\rm loc}^{2s+}(\R^n)\cap L^\infty(\R^n)$ with $[u_k]_{C^{\beta}(\R^n)}\le 1$, $f_k\in L^\infty(B_k)$ with $\|f_k\|_{L^\infty(B_k)}\le \frac{1}{k}$, and $\L^{(k)}$ as in the statement with $\L^{(k)}_x\in \GL$ for all $x\in \R^n$ satisfying \eqref{x-dependence-L2-eta} for some  $\eta\le \frac{1}{k}$, such that 
\[
\L^{(k)} (u_k, x) = f_k \quad\text{in}\quad B_k,
\]
  but 
\begin{equation}
\label{eq:contr_eps}
\|u_k - \ell_k\|_{C^{\lfloor \beta\rfloor}(B_1)}\ge \eps_\circ,
\end{equation}
where $\ell_k$ is the $\lfloor \beta\rfloor$-th expansion of $u_k$ around $0$. Let us denote 
\[
w_k := u_k-\ell_k. 
\]

We then have that $w_k(0) = 0$ and $\nabla w_k(0) = 0$ if $\beta > 1$, so that together with $[w_k]_{C^{\beta}(\R^n)}\le 1$ we get that $w_k$ converges locally uniformly in $C^{\lfloor\beta\rfloor}$ to some $w$ with $w(0) = 0$, $\nabla w(0) = 0$ if $\beta> 1$, and $[w]_{C^{\beta}(\R^n)}\le 1$. 

\item We have,
\[
\L^{(k)}(w_k, x) = f_k\quad\text{in}\quad B_k. 
\]
Let us now show that 
\begin{equation}
\label{eq:Lw0}
\L w  = 0\quad\text{in}\quad \R^n,
\end{equation}
in the viscosity sense (recall Definition~\ref{defi:viscosity}), for some $\L\in \GL$. Here, $\L$ is a weak limit (up to a subsequence) of the translation invariant operators $\L_0^{(k)}:=\L_{x=0}^{(k)}$. Namely, if we consider the finite measures 
\[
\mu_k(dy) = \min\{1, |y|^2\} K^{(k)}_0(dy), 
\]
by Prokhorov's theorem they converge weakly to some finite measure $\mu$, which defines the kernel $K(dy)$ of $\L$ to be 
\[
K(dy) := \frac{\mu(dy)}{\min\{1, |y|^2\}}. 
\]

This characterizes a limit operator (up to a subsequence) of the sequence $(\L^{(k)}_0)_{k\in \N}$. Let us now show that, indeed, \eqref{eq:Lw0} holds in the viscosity sense (recall Definition~\ref{defi:viscosity}). 

Let $\bar x\in \R^n$ be fixed, and let $\phi\in C^2(B_{3r}(\bar x))\cap L_\beta^\infty(\R^n)$  for some $r > 0$, such that $\phi(\bar x) = w(\bar x)$, $\phi < w$ in $B_{3r}(\bar x)\setminus\{\bar x\}$, and $\phi \le w$ in $\R^n$ (up to taking $\phi_\eps(x) = \phi(x) - \eps |x-\bar x|^2$ locally for some $\eps > 0$ small, we can always assume that).
Let us now define  be a sequence of test functions $\phi_k$, as follows:
\[
\phi_k := \left\{
\begin{array}{lll}
w_k & \quad \text{in}\quad \R^n\setminus B_{2r}(\bar x)\\
\phi + c_k&\quad\text{in}\quad B_r(\bar x), 
\end{array}
\right.
\]
where 
\[
c_k := \max\big\{c\in \R : \phi + c \le w_k \ \text{in}  \ B_r\big\},
\]
and defined to $B_{2r}\setminus B_r$ so that  $\phi_k\leq w_k$ and 
\begin{equation}\label{oiurgoeiru}
[\phi_k]_{C^\beta(\R^n)} \leq C(r).
\end{equation}

Since $w_k\to w$ locally uniformly and $\phi < w$ around $\bar x$, one can show $c_k \to 0$ and there exists $\bar x_k \to \bar x$ such that $\phi_k $ touches $w_k$ at $\bar x_k$ from below (see, for example, \cite[Lemma 3.5]{Silnotes} or \cite[Proof of Lemma 4.15]{FR22}). By assumption, we have 
\[
\L^{(k)}(\phi_k, \bar x_k) \ge f_k(\bar x_k) \ge - \frac{1}{k}. 
\]

In particular, 
\[
(\L^{(k)}_0\phi_k)(\bar x_k) \ge - \frac{1}{k}-\left|(\L^{(k)}_0\phi_k)(\bar x_k) -(\L^{(k)}_{\bar x_k}\phi_k)(\bar x_k) \right|. 
\]
Using \eqref{x-dependence-L2-eta} with $\eta \le \frac{1}{k}$, we claim that 
\begin{equation}\label{claim=agost}
\left|(\L^{(k)}_0\phi_k)(\bar x_k) -(\L^{(k)}_{\bar x_k}\phi_k)(\bar x_k) \right| \le \frac{C(r)}{k}.
\end{equation}
Indeed, by Lemma \ref{lem-sdhhh} (namely, \eqref{eq:Linfty_nondivform} rescaled and with $\beta$ instead of $\gamma$) we deduce that 
\[\big\|\big(\L^{(k)}_0 -\L^{(k)}_{\bar x_k}\big)\phi_k\big\|_{L^\infty(B_{r/2}(\bar x))} \le \frac{C(r)}{k}\left(\|\phi_k\|_{C^2(B_r(\bar x))} + [\phi_k]_{C^\beta(\R^n)}\right).\]
Since the right hand side is bounded by $C(r)$, and we have $\bar x_k \to \bar x$, then \eqref{claim=agost} follows.

Now, using also that $\L^{(k)}_0(\phi_k)$ is continuous around $\bar x$ uniformly in $k$ (see Lemma~\ref{lem:Lu_2}), and $\bar x_k \to \bar x$, we get 
\[
(\L^{(k)}_0\phi_k)(\bar x) \ge - \omega_k, 
\]
for some $\omega_k\downarrow 0$ as $k\to \infty$. By a similar reasoning, since $\phi_k - \phi = c_k\to 0$ in $B_r(\bar x)$, and $\phi_k \to w \ge \phi$  locally uniformly in $\R^n\setminus B_r(\bar x)$, we get for a possibly different $\omega_k$, 
\[
(\L^{(k)}_0\phi)(\bar x) \ge - \omega_k \to 0\quad\text{as}\quad k \to \infty. 
\]

Finally, from the weak convergence of the operators $\L_0^{(k)}$ to $\L$, and the fact that $\phi$ is locally $C^2$ around $\bar x$, we obtain 
\[
(\L \phi)(\bar x)\ge 0. 
\]
Since $\phi$ and $\bar x$ were arbitrary, this implies $\L w \ge 0$ in $\R^n$ in the viscosity sense. Repeating the argument from the other side (changing $w_k$ by $-w_k$, for example), we obtain that \eqref{eq:Lw0} holds in the viscosity sense, as we wanted to see. 

\item Since viscosity solutions of translation invariant operators are also distributional solutions (this can be seen, for example, regularizing with a smooth mollifier) we deduce that the previous equality is also satisfied in the distributional sense, and by Liouville's theorem for distributional solutions  (Theorem~\ref{thm:Liouville}) we obtain that $w$ is linear if $\beta>1$, and constant otherwise. 
Together with $w(0) = 0$ and $\nabla w(0) = 0$ if $\beta> 1$, we deduce that $w\equiv 0$ everywhere, which contradicts \eqref{eq:contr_eps}. \qedhere
\end{steps}
\end{proof}

We will now show the following:

\begin{prop}
\label{prop:interior-linear-x-nondiv}
Let $s\in (0,1)$, $\beta\in(0,2s)$ and $\beta\neq 1$, and let $\L$ be of the form \eqref{x-dependence-L-nu} with $\L_x\in \GL$ for all $x\in \R^n$. Then, the following holds. 

For any $\delta > 0$ there exists $C_\delta$ and $\eta_\delta > 0$ such that
\[
[u]_{C^\beta(B_{1/2})} \le \delta [u]_{C^\beta(\R^n)} + C_\delta \left(\|u\|_{L^\infty(B_1)} + \|f\|_{L^\infty(B_1)}\right)
\]
for any $u \in C^{2s+}_c(\R^n)$ with $\L(u,x)=f$ in $B_1$, where $\L$ satisfies \eqref{x-dependence-L2-eta} with $\eta = \eta_\delta$.
The constants $C_\delta$ and $\eta_\delta$ depend only on $\delta$, $n$, $s$, $\beta$, $\lambda$, and $\Lambda$.
\end{prop}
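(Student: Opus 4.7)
The plan is to adapt the compactness-and-blow-up argument used for Proposition~\ref{prop:interior-linear-x-div-cn} to the present non-divergence setting, with the role of Proposition~\ref{prop:compactness-x-div-cn} now played by the Liouville-type estimate of Proposition~\ref{eq:Liouv_nondiv_infty}. I would proceed by contradiction via Lemma~\ref{lem-interior-blowup} applied with $\mu=\beta$, using the functional
\[
\mathcal{S}(w) := \inf\bigg\{\|g\|_{L^\infty(B_1)} : \tilde\L(w,\cdot)=g \text{ in } B_1 \text{ for some $\tilde\L$ of the form \eqref{x-dependence-L-nu} with $\tilde\L_x\in\GL$ satisfying \eqref{x-dependence-L2-eta} with $\eta=\eta_\delta$}\bigg\}
\]
on $w\in C^{2s+}_c(\R^n)$, and $+\infty$ otherwise, where the threshold $\eta_\delta>0$ will be chosen at the end. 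Alternative \ref{it:lem_int_blowup_i} of the lemma is exactly the desired inequality, so the task reduces to excluding alternative \ref{it:lem_int_blowup_ii}.

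In that alternative, we obtain sequences $u_k\in C^{2s+}_c(\R^n)$, right-hand sides $f_k$, and operators $\L^{(k)}$ satisfying \eqref{x-dependence-L2-eta} with $\eta=\eta_\delta$, such that $\L^{(k)}(u_k,\cdot)=f_k$ in $B_1$ and
\[
\frac{\|f_k\|_{L^\infty(B_1)}}{[u_k]_{C^\beta(B_{1/2})}} \le \frac{2\,\mathcal{S}(u_k)}{[u_k]_{C^\beta(B_{1/2})}} \longrightarrow 0,
\]
together with $x_k\in B_{1/2}$ and $r_k\downarrow 0$ for which
\[
v_k(x) := \frac{u_k(x_k+r_k x)}{r_k^\beta [u_k]_{C^\beta(\R^n)}}
\]
satisfies $[v_k]_{C^\beta(\R^n)}=1$ and $\|v_k-p_k\|_{C^\nu(B_1)}>\delta/2$, where $\nu=\lfloor\beta\rfloor$ and $p_k$ is the $\nu$-th Taylor polynomial of $v_k$ at $0$. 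Defining the rescaled operator $\tilde\L^{(k)}$ with kernels $\tilde K^{(k)}_x(dz):=r_k^{2s}K^{(k)}_{x_k+r_k x}(r_k\,dz)$, a direct computation shows that $v_k$ solves $\tilde\L^{(k)}(v_k,\cdot)=\tilde f_k$ in $B_{1/(2r_k)}$ with $\tilde f_k(x):=r_k^{2s-\beta}f_k(x_k+r_kx)/[u_k]_{C^\beta(\R^n)}$, and that $\tilde\L^{(k)}$ still satisfies \eqref{x-dependence-L2-eta} with the same parameter $\eta_\delta$ because that condition is scale invariant. Using $2s-\beta>0$, combined with $r_k\downarrow 0$ and the smallness of $\|f_k\|_{L^\infty(B_1)}/[u_k]_{C^\beta(\R^n)}$, we get $\|\tilde f_k\|_{L^\infty(B_{1/(2r_k)})}\to 0$.

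To close the contradiction I would set $\eps_\circ:=\delta/4$, let $\delta_\circ>0$ be the constant produced by Proposition~\ref{eq:Liouv_nondiv_infty} for this $\eps_\circ$, and \emph{define} $\eta_\delta:=\delta_\circ$. For $k$ sufficiently large we then have $\eta_\delta\le\delta_\circ$ and $\|\tilde f_k\|_{L^\infty(B_{1/\delta_\circ})}\le\delta_\circ$; moreover $v_k\in C^{2s+}(\R^n)\cap L^\infty(\R^n)$ (since $u_k$ is compactly supported) with $[v_k]_{C^\beta(\R^n)}=1$. Proposition~\ref{eq:Liouv_nondiv_infty} then yields $\|v_k-p_k\|_{C^\nu(B_1)}\le\eps_\circ=\delta/4$, directly contradicting $\|v_k-p_k\|_{C^\nu(B_1)}>\delta/2$.

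The subtle point I expect to be the main technical obstacle is tracking the dependence on $\eta$: the parameter $\eta_\delta$ must be fixed \emph{before} the blow-up, since it enters the class of admissible operators in the definition of $\mathcal{S}$, yet it must also be small enough that the rescaled operators $\tilde\L^{(k)}$ meet the smallness hypothesis of Proposition~\ref{eq:Liouv_nondiv_infty}. This apparent circularity is resolved precisely because the Kantorovich-type condition \eqref{x-dependence-L2-eta} is scale invariant, so the rescaling preserves $\eta_\delta$ with no loss and the a priori choice $\eta_\delta=\delta_\circ$ is consistent.
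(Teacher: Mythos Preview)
Your proposal is correct and follows essentially the same route as the paper: both arguments apply Lemma~\ref{lem-interior-blowup} with $\mu=\beta$ and the functional $\mathcal{S}$ you describe, rescale in alternative~\ref{it:lem_int_blowup_ii} to obtain $v_k$ solving a rescaled equation whose operator still satisfies \eqref{x-dependence-L2-eta} with the same $\eta$ by scale invariance, and then invoke Proposition~\ref{eq:Liouv_nondiv_infty} with $\eta_\delta$ chosen small enough to contradict the lower bound on $\|v_k-p_k\|_{C^{\lfloor\beta\rfloor}(B_1)}$. Your explicit identification of $\eta_\delta=\delta_\circ$ and your discussion of why the apparent circularity is harmless are slightly more detailed than the paper's treatment, but the logic is identical.
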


\begin{proof}
Let us denote,  for $w\in C_c^{2s+}(\R^n)$ and $\eta_\delta$ to be chosen, 
\[
\tilde{\mathcal{S}} (w) := \inf\left\{\|g\|_{L^\infty(B_1)} :  
\begin{array}{l}
\tilde \L(w,x)=g \ \text{in the viscosity sense, for some $\tilde\L$}\\
\text{of the same form as in the statement.}
\end{array}\right\}.
\]

We use Lemma~\ref{lem-interior-blowup} with $\mu = \beta$ and
\[
\mathcal{S}(w) =  \left\{
\begin{array}{ll} \displaystyle
\tilde{\mathcal{S}} (w)   & \text{if}\quad w\in  C_c^{2s+}(\R^n).
\\
\infty & \text{otherwise,}
\end{array}
\right.
\]
Notice that the mapping $\mathcal{S}:C^{2s+}(\R^n)\to \R_{\ge 0}$ depends only on $n$, $s$, $\delta$, $\beta$, $\lambda$, and $\Lambda$.

Thus, either Lemma~\ref{lem-interior-blowup}~\ref{it:lem_int_blowup_i} holds, in which case we would have
\[
[u]_{C^\beta(B_{1/2})} \le \delta [u]_{C^\beta(\R^n)} + C_\delta \left(\|u\|_{L^\infty(B_1)} + \|f\|_{L^\infty(B_1)}\right),
\]
or there exists a sequence $u_k\in C^{2s+}_c(\R^n)$ and $\L_{k}$ of the previous form such that $\L_k(u_k,x)=f_k$,
\begin{equation}
\label{eq:fktozero-x-nondiv-Linfty}
\frac{\|f_k\|_{L^\infty(B_1)}}{[u_k]_{C^\beta(B_{1/2})}} \le \frac{2\mathcal{S}(u_k)}{[u_k]_{C^\beta(B_{1/2})}}\to 0,
\end{equation}
and for some $x_k \in B_{1/2}$ and $r_k \downarrow 0$,
\[
v_k(x) := \frac{u_k(x_k+r_k x)}{r_k^\beta[u_k]_{C^\beta(\R^n)}}
\]
satisfies 
\begin{equation}
\label{eq:vkcontradiction-x-nondiv-Linfty}
\|v_k - \ell_k\|_{C^{\lfloor \beta\rfloor}(B_1)} > \frac{\delta}{2},
\end{equation}
where $\ell_k$ is the $\lfloor \beta\rfloor$-th order Taylor polynomial of $v_k$ at 0.  
Then, by scaling, there exists an operator of the form \eqref{x-dependence-L-nu}, $\tilde \L_k$, with $\tilde \L_x\in \GL$ for all $x\in \R^n$ and such that 
\[
\tilde \L_k(v_k,x) = r_k^{2s-\beta} \frac{f_k(x_k+r_k x)}{ [u_k]_{C^\beta(\R^n)}} =:\tilde f_k(x)
\]
in the viscosity sense.
More precisely, if $\L_k$ has kernel $K^k_x(dy)$, then $\tilde \L_k$ has kernel   $\tilde K^k_x(dy)$ given by 
\[
\tilde K^k_x (dy) := r_k^{ 2s} K^k_{x_k+r_k x} (r_k \, dy).
\]
Moreover, since $\L_k$ satisfies \eqref{x-dependence-L2-eta}, $\tilde\L_k$ also satisfies it with the same $\eta = \eta_\delta$. 

Also from \eqref{eq:fktozero-x-nondiv-Linfty} 
\[
\|\tilde f_k\|_{L^\infty (B_{1 /(2r_k)})} 
= r_k^{2s-\beta} \frac{\|f_k(x_k+r_k\,\cdot\,)\|_{L^\infty(B_{1 /(2r_k)})}}{[u_k]_{C^{2s}(\R^n)}}
\leq  r_k^{2s-\beta} \frac{\|f_k\|_{L^\infty(B_1)}}{[u_k]_{C^{2s}(\R^n)}}
\to 0, 
\] 
as $k\to \infty$. 
 
We are now in a situation where we can apply Liouville's theorem from Proposition~\ref{eq:Liouv_nondiv_infty} (if $\eta_\delta$ is small enough, depending on $\delta$), so that if $k$ is large enough we get a contradiction with \eqref{eq:vkcontradiction-x-nondiv-Linfty}. 
\end{proof}

As a consequence of the previous result, we have:

\begin{proof}[Proof of Theorem~\ref{thm-interior-linear-x-linfty} (and  \ref{thmA})]
 The proof is essentially the same as that of Theorem \ref{thm-interior-linear-x-div} in case $\beta<2s$, as a consequence of Proposition~\ref{prop:interior-linear-x-nondiv} (see also \ref{step3} in the proof of Theorem \ref{thm-interior-linear-x}).
Notice that we choose $\delta$ small enough (depending only on $n$, $s$, $\beta$, $\lambda$ and $\Lambda$), and this fixes the value of $\eta:=\eta_\delta$ as well. 
\end{proof}

\appendix
\section{Interpolation and incremental quotients}
\label{appA}

In this appendix we group some technical and useful lemmas on interpolation inequalities and incremental quotients that we used throughout the paper. 
They are proved, e.g., in \cite{FR23}.

We start with a result about $m$-th order incremental quotients:
\[
D_h^m f(x) = 
\left\{
\begin{array}{ll}
\frac{1}{|h|}(f(x+h)-f(x))& \quad\text{if}\quad m = 1,\\
D_h (D_h^{m-1} f(x))& \quad\text{if}\quad m\ge 2.
\end{array}
\right.
\]

\begin{lem}
\label{it:H10}
Let $f\in C^{m-1}(\R^n)$ be such that $D_h^m f$ is constant for any $h\in \R^n$ (maybe depending on $h$). Then, $f$ is a polynomial of degree $m$.
\end{lem}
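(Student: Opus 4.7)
The plan is to reduce the hypothesis to the vanishing of all iterated $(m+1)$-th forward differences of $f$, and then to invoke the classical theorem of Fréchet characterizing polynomials by that property.

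First, I would observe by induction on $m$ that
\[
D_h^m f(x) = |h|^{-m}\sum_{k=0}^m (-1)^{m-k}\binom{m}{k} f(x+kh),
\]
so that, applying $D_h$ once more, the hypothesis that $D_h^m f(\cdot)$ is constant in $x$ gives
\[
D_h^{m+1} f(x) = D_h\bigl(D_h^m f(\cdot)\bigr)(x) \equiv 0 \quad \text{for all } x, h \in \R^n.
\]
Equivalently, $\Delta_h^{m+1} f(x) := \sum_{k=0}^{m+1} (-1)^{m+1-k}\binom{m+1}{k}f(x+kh) = 0$ for every $x,h \in \R^n$.

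Next, for any fixed $x_0, v \in \R^n$, I would consider the continuous one-variable function $g(t) := f(x_0 + tv)$. Taking $h = tv$ in the identity above shows that the classical $(m+1)$-th forward difference $\Delta_t^{m+1} g(s)$ vanishes for every $s, t \in \R$. By the one-dimensional theorem of Fréchet for continuous functions, $g$ is a polynomial of degree at most $m$ in $t$. Thus $f$ is polynomial of degree at most $m$ along every affine line in $\R^n$; since $f$ is continuous, this well-known suffices to conclude that $f$ itself is a polynomial of degree at most $m$ on $\R^n$ (matching $f$ against the candidate polynomial built from the Taylor data of $f$ at $0$ up to order $m$, which is determined by the line restrictions).

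The substantive content of the argument is carried by Fréchet's theorem, and the only care needed is in translating the normalization $|h|^{-m}$ into the classical statement and transferring the one-dimensional conclusion back to $\R^n$. A completely self-contained alternative is a direct induction on $m$: the base case $m=1$ follows from Cauchy's functional equation once one observes that the hypothesis forces $f(x+h) - f(x) = g(h)$ for some continuous $g: \R^n \to \R$; the inductive step then applies the case $m-1$ after verifying that $D_h^{m-1}(D_{h'} f)$ is again constant in $x$ for every fixed $h, h'$.
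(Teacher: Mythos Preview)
The paper does not actually prove this lemma: Appendix~\ref{appA} only states it and defers to \cite{FR23}. So there is no in-paper proof to compare against, and your outline must stand on its own.

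Your reduction is correct and is the heart of the matter: from the hypothesis one gets $\Delta_h^{m+1}f\equiv 0$ for every $h$, and then Fr\'echet's theorem applies. Two small points deserve tightening. First, in the line-restriction step, the sentence ``matching $f$ against the candidate polynomial built from the Taylor data of $f$ at $0$ up to order $m$'' is not quite a proof: $f$ is only $C^{m-1}$, and a continuous $m$-homogeneous top-order term recovered from lines through the origin need not, by itself, be a polynomial. One must use lines not through the origin as well, or simply invoke the multivariate Fr\'echet theorem directly (continuous $f$ with $\Delta_h^{m+1}f\equiv0$ for all $h\in\R^n$ is a polynomial of degree $\le m$), which avoids the detour through one-variable restrictions entirely. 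Second, the inductive alternative you sketch also needs one more ingredient: the hypothesis gives that the \emph{equal-step} difference $D_h^{m-1}(D_h f)$ is constant, but to apply the case $m-1$ to $D_{h'}f$ you need the \emph{mixed} difference $D_h^{m-1}(D_{h'}f)$ to be constant for independent $h,h'$. This does follow (by a polarization identity from $\Delta_h^{m+1}f\equiv 0$), but it is a genuine step and should be stated. With either of these fixes the argument is complete.
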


And: 

\begin{lem}
\label{it:H7_gen}
Let $\alpha\in(0,1)$, $k \in \N$, $\Omega$   convex, and let $u\in  C^{k-1}(\overline\Omega)$ be such that $[u]_{C^{k-1,\alpha}(\overline\Omega)}\leq C_{\circ}$. Then,
\[
\sup_{\substack{h\in \R^n \\ \dist(x, \partial \Omega) \ge k|h|}} \frac{|D_h^k u|}{|h|^{\alpha}}\leq C C_{\circ}.
\]
For some $C$ depending only on $n$, $\alpha$, and $k$. 
\end{lem}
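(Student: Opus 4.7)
The plan is to reduce everything to the fundamental theorem of calculus applied iteratively. First, I would introduce the unnormalized $k$-th difference $\delta_h^k u(x) := |h|^k D_h^k u(x)$, which, by induction on $k$ using the recursive definition of $D_h^k$, admits the explicit binomial expression $\delta_h^k u(x) = \sum_{j=0}^k \binom{k}{j}(-1)^{k-j} u(x+jh)$. In this language, the target estimate becomes $|\delta_h^k u(x)| \le C C_\circ |h|^{k-1+\alpha}$ whenever $\dist(x,\partial\Omega)\ge k|h|$.

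Next, I would establish by induction on $m$ the integral representation
\[
\delta_h^m u(x) = \int_{[0,1]^m} (h\cdot\nabla)^m u\bigl(x + (t_1 + \cdots + t_m)h\bigr)\, dt_1 \cdots dt_m
\]
for any $u \in C^m(\overline\Omega)$ and any $x$ with $\dist(x,\partial\Omega)\ge m|h|$. The base case $m=1$ is just $u(x+h)-u(x) = \int_0^1 (h\cdot\nabla) u(x+th)\, dt$, and the inductive step follows from the identity $\delta_h^m u(x) = \delta_h^{m-1} u(x+h) - \delta_h^{m-1} u(x)$, writing the inductive formula for $\delta_h^{m-1}$ at both $x+h$ and $x$, and differentiating in the outer variable under the integral sign. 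Convexity of $\Omega$ guarantees that all evaluation points $x+\tau h$ with $\tau \in [0,m]$ lie in $\overline\Omega$.

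The key point is that we only have $u\in C^{k-1}(\overline\Omega)$, so we apply this integral formula only at order $k-1$ and then take one \emph{plain} finite difference on top:
\[
\delta_h^k u(x) = \int_{[0,1]^{k-1}}\Bigl[(h\cdot\nabla)^{k-1} u(x+sh+h) - (h\cdot\nabla)^{k-1} u(x+sh)\Bigr] dt_1 \cdots dt_{k-1},
\]
where $s := t_1 + \cdots + t_{k-1}$. Since $(h\cdot\nabla)^{k-1}u$ is a linear combination of partial derivatives $\partial^\beta u$ with $|\beta|=k-1$ and coefficients of the form $h^\beta$, and each such $\partial^\beta u$ is $\alpha$-H\"older with seminorm bounded by $C_\circ$, the integrand is pointwise bounded by $C|h|^{k-1}\, C_\circ\, |h|^\alpha$. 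Integrating over $[0,1]^{k-1}$ and dividing by $|h|^k$ yields the desired bound on $D_h^k u$.

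The only delicate point, and hardly a genuine obstacle, is checking that the set of points accessed lies in $\overline\Omega$: the arguments are $x+(s+t)h$ with $s\in[0,k-1]$ and $t\in[0,1]$, so $(s+t)|h|\le k|h|$, which is precisely what the hypothesis $\dist(x,\partial\Omega)\ge k|h|$ together with convexity of $\Omega$ guarantees. The rest is combinatorial bookkeeping of binomial coefficients and powers of $|h|$.
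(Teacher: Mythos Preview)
The paper does not give its own proof of this lemma; it is listed in the appendix as a known technical result, with the proof deferred to \cite{FR23}. Your argument---writing the $(k{-}1)$-st unnormalized difference via an iterated fundamental theorem of calculus, and then estimating one final first difference of $(h\cdot\nabla)^{k-1}u$ using the $C^\alpha$ bound on the top-order partial derivatives---is the standard route and is correct. The convexity check you describe is exactly what is needed to keep all evaluation points in $\overline\Omega$.

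One remark on the exponent. The bound you actually establish is $|\delta_h^k u|\le C C_\circ |h|^{k-1+\alpha}$, equivalently $|D_h^k u|\le C C_\circ |h|^{\alpha-1}$. This is the sharp estimate, but it does not literally coincide with the displayed inequality $|D_h^k u|/|h|^\alpha\le C C_\circ$ as written in the statement: that would require $|\delta_h^k u|\le C C_\circ |h|^{k+\alpha}$, which fails already for $k=1$ (take any nontrivial $C^\alpha$ function). The exponent in the display is a typo; the way the lemma is invoked in the body of the paper---to obtain $|2w(x_\circ)-w(x_\circ+y)-w(x_\circ-y)|\le C\rho^{2s+\alpha}$ from $\|w\|_{C^{2s+\alpha}}\le 1$ in the proof of Lemma~\ref{lem-sdhhh}---confirms that the intended bound is exactly the one you prove. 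Your sentence ``dividing by $|h|^k$ yields the desired bound on $D_h^k u$'' should therefore be read as yielding $|D_h^k u|\le C C_\circ |h|^{\alpha-1}$, not the literal displayed form.
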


The following is an interpolation inequality type result. 

\begin{lem}\label{lem:interp_mult}
Let $\gamma >\beta > 0$,  let $r \in (0, \infty]$, and let $u\in C^{\gamma}(B_r)$. Then 
\[
[u]_{C^\beta(B_r)}\le C\|u\|^{1-\frac{\beta}{\gamma}}_{L^\infty(B_r)}[u]^{\frac{\beta}{\gamma}}_{C^\gamma(B_r)} +Cr^{-\beta} \|u\|_{L^\infty(B_r)},
\]
for some $C$ depending only on $n$, $\gamma$, and $\beta$. 
\end{lem}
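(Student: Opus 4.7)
The plan is to prove this interpolation estimate by first handling the prototype two-scale case where $0 < \beta < \gamma \le 1$, then optimizing a threshold to recover both terms on the right-hand side, and finally extending to general $\gamma > \beta > 0$ through Lemmas~\ref{it:H7_gen} and \ref{it:H10}.

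For the basic case $\gamma \le 1$, I fix a threshold $\delta \in (0, r]$ and arbitrary $x, y \in B_r$, and split the ratio $|u(x)-u(y)|/|x-y|^\beta$ according to whether $|x-y| \le \delta$ or not. For $|x-y| \le \delta$, I use the $C^\gamma$ seminorm to write
\[
\frac{|u(x)-u(y)|}{|x-y|^\beta} = \frac{|u(x)-u(y)|}{|x-y|^\gamma}\, |x-y|^{\gamma-\beta} \le [u]_{C^\gamma(B_r)}\, \delta^{\gamma-\beta},
\]
while for $|x-y| > \delta$ the $L^\infty$ bound gives $|u(x)-u(y)|/|x-y|^\beta \le 2\|u\|_{L^\infty(B_r)}\delta^{-\beta}$. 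Taking the supremum over $x, y$ yields, for every $\delta \in (0,r]$,
\[
[u]_{C^\beta(B_r)} \le [u]_{C^\gamma(B_r)}\,\delta^{\gamma-\beta} + 2\|u\|_{L^\infty(B_r)}\,\delta^{-\beta}.
\]

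I then optimize in $\delta$. The unconstrained minimizer of the right-hand side is $\delta_* := \bigl(\|u\|_{L^\infty(B_r)}/[u]_{C^\gamma(B_r)}\bigr)^{1/\gamma}$, and I take $\delta := \min\{r, \delta_*\}$. If $\delta_* \le r$, both terms at $\delta = \delta_*$ are bounded by $C\|u\|_{L^\infty(B_r)}^{1-\beta/\gamma}[u]_{C^\gamma(B_r)}^{\beta/\gamma}$. If instead $\delta_* > r$, equivalently $r^\gamma[u]_{C^\gamma(B_r)} < \|u\|_{L^\infty(B_r)}$, then taking $\delta = r$ makes both terms bounded by $Cr^{-\beta}\|u\|_{L^\infty(B_r)}$. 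In either case the announced inequality holds in this range.

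To extend to general $\gamma > \beta > 0$, I argue by induction on the integer parts of $\beta$ and $\gamma$. When $\beta \in (0,1)$ but $\gamma > 1$, the small-scale estimate improves to $|u(x)-u(y)|/|x-y|^\beta \le \|\nabla u\|_{L^\infty(B_r)}|x-y|^{1-\beta}$, and $\|\nabla u\|_{L^\infty(B_r)}$ is itself controlled by the target right-hand side via the same lemma applied one order lower. For $\beta \ge 1$, writing $\beta = m+\beta'$ with $\beta' \in [0,1)$, I apply the base case to $D^m u$ with exponents $\beta'$ and $\gamma - m$, and bound each intermediate $\|D^k u\|_{L^\infty(B_r)}$ by the already-proved lower-order interpolations; Lemma~\ref{it:H7_gen} supplies the passage from $m$-th order incremental quotients to $\|D^m u\|_{L^\infty(B_r)}$, and Lemma~\ref{it:H10} ensures the equivalence with H\"older seminorms. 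The main obstacle is the bookkeeping of the $r$-dependent term: each application of the base case at derivative level $k$ produces a factor $r^{-(\beta-k)}\|D^k u\|_{L^\infty(B_r)}$, and one must choose the interpolation scales at each step compatibly so that the final remainder is exactly $r^{-\beta}\|u\|_{L^\infty(B_r)}$ with no spurious positive powers of $r$ or extra seminorm factors.
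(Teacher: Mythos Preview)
The paper does not actually prove Lemma~\ref{lem:interp_mult}; it is stated in Appendix~\ref{appA} with the remark ``They are proved, e.g., in \cite{FR23}.'' So there is no in-paper argument to compare against, and your proposal must stand on its own.

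Your base case $0<\beta<\gamma\le 1$ is correct and is the standard threshold-splitting argument. The optimization in $\delta$ is also fine.

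The extension to general $\gamma>\beta>0$, however, has a genuine gap: the two lemmas you invoke do not do what you claim. Lemma~\ref{it:H7_gen} goes in the \emph{opposite} direction --- it bounds $m$-th order incremental quotients \emph{by} the H\"older seminorm $[u]_{C^{k-1,\alpha}}$, not the other way around --- so it cannot ``supply the passage from $m$-th order incremental quotients to $\|D^m u\|_{L^\infty(B_r)}$.'' Lemma~\ref{it:H10} is a Liouville-type characterization (if all $D^m_h f$ are constant, then $f$ is a polynomial); it has nothing to do with any ``equivalence with H\"older seminorms.'' Neither lemma is relevant to the inductive step you sketch.

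The right mechanism for the general case is the classical interpolation of intermediate derivatives, e.g.\ $\|D^k u\|_{L^\infty(B_r)}\le C\|u\|_{L^\infty(B_r)}^{1-k/\gamma}[u]_{C^\gamma(B_r)}^{k/\gamma}+Cr^{-k}\|u\|_{L^\infty(B_r)}$ for integer $k<\gamma$, which is proved by a direct Taylor-remainder argument (or by your own base case applied iteratively to $\nabla u$, $D^2 u$, etc.), not by the incremental-quotient lemmas. Once those integer-order interpolations are in hand, your reduction ``apply the base case to $D^m u$ with exponents $\beta'$ and $\gamma-m$'' works, and the $r^{-\beta}$ bookkeeping you flag is handled by Young's inequality on the mixed products. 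You should replace the references to Lemmas~\ref{it:H7_gen} and~\ref{it:H10} with this direct derivative interpolation.
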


And in the next lemma, we denote by $\delta^2_h u(x)$ the second order centered increments, 
\[
  \delta^2_h u(x) = \frac{u(x+h)+u(x-h)}{2} - u(x),
\]

\begin{lem}
\label{lem:A_imp_2}
Let $\alpha\in (0, 1]$. Then, we have the following:
\begin{enumerate}[leftmargin=*, label=(\roman*)]
\item \label{it:A:1} If $u\in C_{\rm loc}^{1,\alpha}(\R^n)$ with $[u]_{C^{1,\alpha}(\R^n)}\le 1$ then 
\[
\begin{split}
|u(x+h) - u(x) - u(h) + u(0)|& \le \min\{|h||x|^\alpha, |h|^\alpha|x|\}\\ & \le |h|^{t+\alpha(1-t)}|x|^{\alpha t + 1-t}
\end{split}
\]
and
\[
\begin{split}
\big| \delta^2_h u(x) -  \delta^2_h u(0)\big|& \le \min\{2|h|^{1+\alpha},  |h|^\alpha|x|, |h||x|^{\alpha}\}\\
& \le 2 |h|^{(1+\alpha) (1-t)+t\alpha}|x|^t
\end{split}
\]
for all $x, h\in \R^n$ and $t\in [0,1]$.

\item \label{it:A:3}  If $u\in C_{\rm loc}^{2,\alpha}(\R^n)$ with $[u]_{C^{2,\alpha}(\R^n)}\le 1$ then 
\[
\big| \delta^2_h u(x) -  \delta^2_h u(0)\big|\le \min\left\{ |h|^{2}|x|^\alpha, |h|^{1+\alpha}|x|\right\}\le |h|^{2t+(1+\alpha)(1-t)}|x|^{t\alpha + 1 - t}
\]
for all $x, h\in \R^n$ and $t\in [0, 1]$. 
\end{enumerate}
\end{lem}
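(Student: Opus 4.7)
The plan is to establish each bound by direct calculation with Taylor expansion (or the fundamental theorem of calculus) combined with the assumed H\"older regularity of the appropriate derivative, and then to obtain the refined interpolated form from the elementary inequality $\min\{A,B\}\le A^{t}B^{1-t}$ valid for $A,B\ge 0$ and $t\in[0,1]$.

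For part~\ref{it:A:1}, I would first treat the mixed difference $\Psi(x,h):=u(x+h)-u(x)-u(h)+u(0)$ by writing
\[
\Psi(x,h)=\int_{0}^{1}\bigl[\nabla u(x+sh)-\nabla u(sh)\bigr]\cdot h\,ds.
\]
The bound $|\Psi(x,h)|\le|h||x|^{\alpha}$ then follows immediately from $[\nabla u]_{C^{\alpha}}\le 1$, and the symmetry $\Psi(x,h)=\Psi(h,x)$ in its definition yields $|\Psi(x,h)|\le|x||h|^{\alpha}$. Plugging $A=|h||x|^{\alpha}$ and $B=|h|^{\alpha}|x|$ into the interpolation inequality gives the stated refined estimate $|h|^{t+\alpha(1-t)}|x|^{\alpha t+1-t}$. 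For the second inequality in part~\ref{it:A:1}, I would use the identity
\[
\delta^{2}_{h}u(x)-\delta^{2}_{h}u(0)=\tfrac{1}{2}\bigl[\Psi(x,h)+\Psi(x,-h)\bigr],
\]
which transports the bounds $|h|^{\alpha}|x|$ and $|h||x|^{\alpha}$ from the first estimate. The third bound $2|h|^{1+\alpha}$ comes from the pointwise estimate
\[
\delta^{2}_{h}u(y)=\tfrac{1}{2}\int_{0}^{1}\bigl[\nabla u(y+sh)-\nabla u(y-sh)\bigr]\cdot h\,ds,
\]
together with $[\nabla u]_{C^{\alpha}}\le 1$, applied at $y=x$ and $y=0$ and combined by the triangle inequality. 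Interpolating between $2|h|^{1+\alpha}$ and $|h|^{\alpha}|x|$ yields the claimed form.

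For part~\ref{it:A:3}, the argument is analogous but uses one more order of regularity. I would start from the second-order Taylor expansion with integral remainder
\[
\delta^{2}_{h}u(x)=\tfrac{1}{2}\int_{0}^{1}(1-s)\,h^{T}\bigl[D^{2}u(x+sh)+D^{2}u(x-sh)\bigr]h\,ds;
\]
subtracting the same identity at $x=0$ and bounding the integrand via $[D^{2}u]_{C^{\alpha}}\le 1$ gives $|\delta^{2}_{h}u(x)-\delta^{2}_{h}u(0)|\le|h|^{2}|x|^{\alpha}$. For the complementary bound, I would compute $\nabla_{x}\delta^{2}_{h}u(y)=\tfrac{1}{2}\bigl[\nabla u(y+h)+\nabla u(y-h)-2\nabla u(y)\bigr]$ and rewrite it as $\tfrac{1}{2}\int_{0}^{1}\bigl[D^{2}u(y+sh)-D^{2}u(y-sh)\bigr]h\,ds$; the $C^{\alpha}$ hypothesis on $D^{2}u$ then yields $|\nabla_{x}\delta^{2}_{h}u(y)|\le C|h|^{1+\alpha}$ uniformly in $y$, and the fundamental theorem of calculus produces $|\delta^{2}_{h}u(x)-\delta^{2}_{h}u(0)|\le C|h|^{1+\alpha}|x|$. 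Interpolation between the two bounds via $\min\{A,B\}\le A^{t}B^{1-t}$ closes the proof.

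I do not foresee any serious obstacle: the whole argument is an elementary computation, and the only care required is in the bookkeeping of the interpolation step to verify that the resulting exponents in $|h|$ and $|x|$ match those in the statement, and in correctly organizing the symmetric and antisymmetric combinations of $\Psi(x,\pm h)$ and of the $D^{2}u$ differences.
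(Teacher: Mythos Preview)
Your proposal is correct. The paper itself does not give a proof of this lemma but simply cites it from \cite{FR23}; your direct argument via the fundamental theorem of calculus, the symmetry $\Psi(x,h)=\Psi(h,x)$, and the elementary interpolation $\min\{A,B\}\le A^{t}B^{1-t}$ is exactly the natural one and yields the stated constants (indeed, your computations give constants no larger than~$1$ in part~\ref{it:A:3}).
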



\begin{thebibliography}{99}


\bibitem{BFV} B. Barrios, A. Figalli, E. Valdinoci, \emph{Bootstrap regularity for integro-differential operators and its application to nonlocal minimal surfaces}, Ann. Sc. Norm. Super. Pisa Cl. Sci.  \textbf{13} (2014), 609-639.


\bibitem{Bas09} R. Bass, \textit{Regularity results for stable-like operators}, J. Funct. Anal. \textbf{257} (2009), 2693-2722. 


\bibitem{BL02} R. Bass, D. Levin, \textit{Harnack inequalities for jump processes}, Potential Anal. \textbf{17} (2002), 375-382. 

\bibitem{CabreCozzi} X. Cabr\'e, M. Cozzi, \emph{A gradient estimate for nonlocal minimal graphs}, Duke Math. J. \textbf{168} (2019), 775-848.

\bibitem{CS09} L. Caffarelli, L. Silvestre, \emph{Regularity theory for fully nonlinear integro-differential equations}, Comm. Pure Appl. Math. \textbf{62} (2009), 597-638.

\bibitem{CS11b} L. Caffarelli, L. Silvestre, \emph{The Evans-Krylov theorem for nonlocal fully nonlinear equations}, Ann. of Math. \textbf{174} (2011), 1163-1187.

\bibitem{CS11} L. Caffarelli, L. Silvestre, \emph{Regularity results for nonlocal equations by approximation}, Arch. Rat. Mech. Anal. \textbf{200} (2011), 59-88.


\bibitem{CFS} M. Caselli, E. Florit, J. Serra, \emph{Yau's conjecture for nonlocal minimal surfaces}, preprint arXiv (2023).


\bibitem{CS20} J. Chaker, L. Silvestre, \emph{Coercivity estimates for integro-differential operators}, Calc. Var. Partial Differential Equations \textbf{59} (2020), 106.



\bibitem{DRSV22} S. Dipierro, X. Ros-Oton, J. Serra, E. Valdinoci, \emph{Non-symmetric stable operators: regularity theory and integration by parts} Adv. Math. \textbf{401} (2022), 108321.

\bibitem{DJZ} H. Dong, T. Jin, H. Zhang, \emph{Dini and Schauder estimates for nonlocal fully nonlinear parabolic equations with drifts}, Anal. PDE \textbf{11} (2018), 1487-1534.



\bibitem{DGLZ} Q. Du, M. Gunzburger, R. B. Lehoucq, K. Zhou, \emph{Analysis and approximation of nonlocal diffusion problems with volume constraints}, SIAM Rev. \textbf{54} (2012), 667-696. 


\bibitem{DK20} B. Dyda, M. Kassmann, \emph{Regularity estimates for elliptic nonlocal operators}, Anal. PDE \textbf{13} (2020), 317-370. 


\bibitem{Eri02}  A. C. Eringen, \emph{Nonlocal Continuum Field Theories}, Springer, New York, 2002. 



\bibitem{Fal} M. M. Fall, \emph{Regularity results for nonlocal equations and applications}, Calc. Var. Partial Differential Equations  \textbf{59} (2020), 181.

\bibitem{Fer23} X. Fern\'andez-Real, \emph{Smooth approximations for fully nonlinear nonlocal elliptic equations}, Trans. Amer. Math. Soc. (2023), to appear. 

\bibitem{FR22} X. Fern\'andez-Real, X. Ros-Oton, \emph{Regularity Theory for Elliptic PDE}, Zurich Lectures in Advanced Mathematics, EMS Press, 2022. 

\bibitem{FR23} X. Fern\'andez-Real, X. Ros-Oton, \emph{Integro-Differential Elliptic Equations}, Progress in Mathematics, Birkh\"auser Cham, 2024, to appear (available at the webpage of the authors).


\bibitem{GO08} G. Gilboa, S. Osher, \emph{Nonlocal operators with applications to image processing}, Multiscale Model. Simul. \textbf{7} (2008), 1005-1028. 


\bibitem{Han92} L. G. Hanin, \emph{Kantorovich-Rubinstein norm and its application in the theory of Lispchitz spaces}, Proc. Amer. Math. Soc.  \textbf{115} (1992), 345-352.

\bibitem{Han99} L. G. Hanin, \emph{An extension of the Kantorovich norm}, In: \emph{Monge Amp\`ere equation: applications to geometry and optimization}, 113-130, Contemp. Math.  {226}, American Mathematical Society, 1999. 


\bibitem{IS} C. Imbert, L. Silvestre, \emph{Weak Harnack inequality for the Boltzmann equation without cut-off}, J. Eur. Math. Soc. \textbf{22} (2020), 507-592.


\bibitem{IS2} C. Imbert, L. Silvestre, \emph{The Schauder estimate for kinetic integral equations}, Anal. PDE  \textbf{14} (2021), 171-204.

\bibitem{IS3} C. Imbert, L. Silvestre, \emph{Global regularity estimates for the Boltzmann equation without cut-off}, J. Amer. Math. Soc. \textbf{35} (2022), 625-703.


\bibitem{JX} T. Jin, J. Xiong, \emph{Schauder estimates for nonlocal fully nonlinear equations},  Ann. Inst. H. Poincar\'e Anal. Non Lin\'eaire  \textbf{33} (2016), 1375-1407.


\bibitem{Kas09} M. Kassmann, \emph{A priori estimates for integro-differential operators with measurable kernels}, Calc. Var. Partial Differential Equations \textbf{34} (2009), 1-21. 


\bibitem{Kri13} D. Kriventsov, \emph{$C^{1,\alpha}$ interior regularity for nonlinear nonlocal elliptic equations with
rough kernels}, Comm. Partial Differential Equations \textbf{38} (2013), 2081-2016.

\bibitem{KMS} T. Kuusi, G. Mingione, Y. Sire, \emph{Nonlocal self-improving properties}, Anal. PDE \textbf{8} (2015), 57-114.

\bibitem{Loh23} A. Loher, \emph{Quantitative Schauder estimates for hypoelliptic equations}, preprint arXiv (2023). 

\bibitem{MK} R. Metzler, J. Klafter, \emph{The random walk's guide to anomalous diffusion: a fractional dynamics approach}, Phys. Rep. \textbf{339} (2000), 1-77. 


\bibitem{Now0} S. Nowak, \emph{Regularity theory for nonlocal equations with VMO coefficients}, Ann. Inst. H. Poincar\'e Anal. Non Lin\'eaire  (2023), to appear.

\bibitem{Now} S. Nowak, \emph{Improved Sobolev regularity for linear nonlocal equations with VMO coefficients}, Math. Ann. (2023), to appear.

\bibitem{PK} G. Palatucci, T. Kuusi, (Eds.) \emph{Recent Developments in Nonlocal Theory}, De Gruyter, 2017.




\bibitem{RS-stable} X. Ros-Oton, J. Serra, \textit{Regularity theory for general stable operators}, J. Differential Equations  \textbf{260} (2016), 8675-8715.





\bibitem{Sat99} K. Sato, \textit{L\'evy Processes and Infinitely Divisible Distributions}, Cambridge Studies in Advanced Mathematics 68, Cambridge University Press,  1999.


\bibitem{SS} R. W. Schwab, L. Silvestre, \emph{Regularity for parabolic integro-differential equations with very irregular kernels}, Anal. PDE \textbf{9} (2016), 727-772.



\bibitem{Ser2} J. Serra, \textit{$C^{\sigma+\alpha}$ regularity for concave nonlocal fully nonlinear elliptic equations with rough kernels}, Calc. Var. Partial Differential Equations  \textbf{54} (2015), 3571-3601.


\bibitem{SiICM} L. Silvestre, \emph{Regularity estimates for parabolic integro-differential equations and applications}, Proceedings of the ICM 2014.


\bibitem{Silnotes} L. Silvestre, \emph{Viscosity solutions to elliptic equations}, Lecture Notes available at the
webpage of the author.

\bibitem{Sim97} L. Simon, \emph{Schauder estimates by scaling}, Calc. Var. Partial Differential Equations \textbf{5} (1997), 391-407.

\end{thebibliography}
\end{document}